\numberwithin{equation}{section}
\newcommand{\R}{\mathbb{R}}
\newcommand{\T}{\mathbb{T}}
\newcommand{\Z}{\mathbb{Z}}
\numberwithin{equation}{section} 
\newtheorem{theorem}{Theorem}[section]
\newtheorem{lemma}[theorem]{Lemma}
\newtheorem{proposition}[theorem]{Proposition}
\newtheorem{corollary}[theorem]{Corollary}
\newtheorem{definition}[theorem]{Definition}
\newtheorem{remark}[theorem]{Remark}
\newtheorem{assumption}[theorem]{Assumption}
\begin{document}

\title[On the stability of non-monotonic shear flows]{On the stability of shear flows in bounded channels, II: non-monotonic shear flows}

\author{Alexandru D. Ionescu}
\address{Princeton University}
\email{aionescu@math.princeton.edu}

\author{Sameer Iyer}
\address{University of California, Davis}
\email{sameer@math.ucdavis.edu}

\author{Hao Jia}
\address{University of Minnesota}
\email{jia@umn.edu}



\thanks{The first author is supported in part by NSF grant DMS-2007008.  The second author is supported in part by NSF grant DMS-2306528. The third author is supported in part by NSF grant DMS-1945179. }

\begin{abstract}
{\small}
We give a proof of linear inviscid damping and vorticity depletion for non-monotonic shear flows with one critical point in a bounded periodic channel. In particular, we obtain quantitative depletion rates for the vorticity function without any symmetry assumptions. 
\end{abstract}

\maketitle

\begin{center}
\large\textit{Dedicated to Carlos Kenig, on the occasion of his 70th birthday.}    
\end{center}

\bigskip

\noindent
{\bf Key Words:} Inviscid damping, vorticity depletion, non-monotonic shear flows.\\
\noindent
{\bf Mathematics Subject Classification:} 35B40, 35Q31, 35P25 

\setcounter{tocdepth}{1}

\tableofcontents


\section{Introduction}
The study of stability problems in mathematical analysis of fluid dynamics has a long and distinguished history, dating back to the work of Kelvin \cite{Kelvin}, Orr \cite{Orr} and Rayleigh \cite{Rayleigh} among many others, and continuing to the present day. Hydrodynamical stability problems can be considered in both two and three dimensions. In this paper we work with two dimensional inviscid flows. 

For the Euler equations, there is significant recent progress on the asymptotic stability of monotonic shear flows and vortices, assuming spectral stability, see for example \cite{Grenier,dongyi,Zillinger1,Zillinger2, JiaL,JiaG,Bed2,JiaVortex,Xiaoliu,Stepin} for linear results. The main mechanism of stabilization is the so called ``inviscid damping", which refers to the transfer of energy of vorticity to higher and higher frequencies leading to decay of the stream and velocity functions, as $t\to\infty$.  Extending the linearized stability analysis for inviscid fluid equations to the full nonlinear setting is a challenging problem, and the only available results are on spectrally stable monotonic shear flows \cite{BM,NaZh,IOJI,IJacta}, and on point vortices \cite{IOJI2}. We refer also to the recent review article \cite{IJICM} for a more in-depth discussion of recent developments of both linear and nonlinear inviscid damping. 

Many physically important shear flows are not monotonic, such as the Poiseuille flow and the Kolmogorov flows. For such flows on the linear inviscid level, there is an additional significant physical phenomenon called ``vorticity depletion" which refers to the asymptotic vanishing of vorticity as $t\to\infty$ near the critical point where the derivative of the shear flow is zero, first predicted in Bouchet and Morita \cite{Bouchet}, and proved rigorously in Wei-Zhang-Zhao \cite{Dongyi2}.  A similar phenomenon was proved in Bedrossian-Coti Zelati-Vicol \cite{Bed2} for the case of vortices. See also \cite{JiaVortex} by the first and third author for a refined description of the dynamics in Gevrey spaces as a step towards proving nonlinear vortex symmetrization. 

In \cite{Dongyi2} by Wei-Zhang-Zhao, sharp linear inviscid damping estimates and quantitative depletion estimates were obtained for an important class of ``symmetric shear flows" in a channel (see also \cite{Dongyi3} by Wei-Zhang-Zhao for a similar result for Kolmogorov flow). When no symmetry is assumed, only qualitative bounds are available.  Heuristically the general case should be similar to the symmetric one, since the main vorticity depletion mechanism is completely local and asymptotically all shear flows approach symmetric ones at the (non-degenerate) critical points. However there are significant difficulties in using the approach of \cite{Dongyi2} to extend the quantitative depletion bounds of \cite{Dongyi2} to the general case, as the argument in \cite{Dongyi2} relies heavily on decomposition of functions into odd and even parts, which are specific to symmetric shear flows.

 In this paper we prove linear inviscid damping estimates and quantitative vorticity depletion estimates for a class of stable non-monotonic shear flows with one non-degenerate critical point. The main new features of our results are that we do not need symmetry condition on the background shear flow, and that our formulation on quantitative depletion for vorticity function seems to be new even for general symmetric shear flows (see however Wei-Zhang-Zhao \cite{Dongyi3} which contains a sharp depletion rate at the critical points for Kolmogorov flow), see Theorem \ref{thm} below for the precise statements. We begin with the description of our main equations and theorem.
  
\subsection{Main equations} 
Consider the two dimensional Euler equation linearized around a shear flow $(b(y),0)$, in the periodic channel $ (x,y,t)\in \mathbb{T}\times[0,1]\times[0,\infty)$:
\begin{equation}\label{Main1}
\begin{split}
&\partial_t\omega+b(y)\partial_x\omega-b''(y)u^y=0,\\
&{\rm div}\,u=0\qquad{\rm and}\qquad \omega=-\partial_yu^x+\partial_xu^y,
\end{split}
\end{equation}
with the natural non-penetration boundary condition $u^y|_{y=0,1}=0$. 

For the linearized flow, 
$\int\limits_{\mathbb{T}\times[0,\,1]}u^x(x,y,t)\,dxdy$ and $ \int\limits_{\mathbb{T}\times[0,\,1]}\omega(x,y,t)\,dxdy$
are conserved quantities. In this paper, we will assume that
 $$\int_{\mathbb{T}\times[0,1]}u_0^x(x,y)\,dxdy=\int_{\mathbb{T}\times[0,1]}\omega_0\,dx dy=0.$$
 These assumptions can be dropped by adjusting $b(y)$ with a linear shear flow $C_0y+C_1$.
 Then one can see from the divergence free condition on $u$ that 
there exists a stream function $\psi(t,x,y)$ with $\psi(t,x,0)=\psi(t,x,1)\equiv 0$, such that 
\begin{equation}\label{eqS1}
u^x=-\partial_y\psi,\,\,u^y=\partial_x\psi.
\end{equation}
The stream function $\psi$ can be solved through
\begin{equation}\label{eq:equationStream}
\Delta\psi=\omega, \qquad \psi|_{y=0,1}=0.
\end{equation}
We summarize our equations as follows
\begin{equation}\label{main}
\left\{\begin{array}{ll}
\partial_t\omega+b(y)\partial_x\omega-b''(y)\partial_x\psi=0,&\\
\Delta \psi(t,x,y)=\omega(t,x,y),\qquad \psi(t,x,0)=\psi(t,x,1)=0,&\\
(u^x,u^y)=(-\partial_y\psi,\partial_x\psi),&
\end{array}\right.
\end{equation}
for $ t\ge0, (x,y)\in\mathbb{T}\times[0,1]$. 

Our goal is to understand the long time behavior of $\omega(t)$ as $t\to\infty$, with Sobolev regular initial vorticity $\omega_0$.

\subsection{The main results}
 We describe more precisely the main assumptions and our main conclusion. The main conditions we shall assume on the shear flow $b(y)\in C^{4}([0,1])$ are as follows.

\begin{assumption}\label{Massum}
We assume that the background flow $b(y)\in C^{4}([0,1])$ satisfies the following conditions.
\begin{enumerate}

\item 
\begin{equation}\label{Massum1}S:=\{y\in[0,1]:\,b'(y)=0\}=\{y_\ast\}\subset(0,1).
\end{equation}
In addition, $b''(y_\ast)\neq0$. For the sake of concreteness, we assume that $b''(y_\ast)>0$ in this paper.

\item For $k\in\mathbb{Z}\backslash\{0\}$, the linearized operator $L_k: L^2(0,1)\to L^2(0,1)$ defined as 
\begin{equation}\label{B11}
L_kg(y):=b(y)g(y)+b''(y)\int_0^1G_k(y,z)g(z)\,dz
\end{equation} 
has no discrete eigenvalues nor generalized embedded eigenvalues. In the above $G_k$ is the Green's function for $k^2-\frac{d^2}{dy^2}$ on the interval $(0,1)$ with zero Dirichlet boundary condition, as defined in \eqref{eq:GreenFunction} below. 

\end{enumerate}

\end{assumption}
We refer to section \ref{rep} below for the definition and more discussion about generalized embedded eigenvalues.\\

Our main result is the following theorem.
\begin{theorem}\label{thm}
Assume that $\omega(t,\cdot)\in C([0,\infty), H^4(\T\times[0,1]))$ with the associated stream function $ \psi(t,\cdot)$ is the unique solution to \eqref{main}, with initial data $\omega_0\in H^4(\T\times[0,1])$ satisfying for all $y\in[0,1]$,
\begin{equation}\label{thm0}
\int_\T\omega_0(x,y)\,dx=0.
\end{equation}
Then we have the following bounds.

(i) Inviscid damping estimates:
\begin{equation}\label{th0.5}
\|\psi(t,\cdot)\|_{L^2(\T\times[0,1])}\lesssim \frac{1}{\langle t\rangle^2}\|\omega_0\|_{H^4(\T\times[0,1])},
\end{equation}
\begin{equation}\label{thm1}
\|u^x(t,\cdot)\|_{L^2(\T\times[0,1])}\lesssim \frac{1}{\langle t\rangle}\|\omega_0\|_{H^4(\T\times[0,1])}, \quad \|u^y(t,\cdot)\|_{L^2(\T\times[0,1])}\lesssim \frac{1}{\langle t\rangle^2}\|\omega_0\|_{H^4(\T\times[0,1])}. 
\end{equation} 

(ii) Vorticity depletion estimates: there exists a decomposition
\begin{equation}\label{thm2}
\omega(t,x,y):=\omega_{\rm loc}(t,x,y)+\omega_{\rm nloc}(t,x,y),
\end{equation}
where for $(x,y,t)\in\T\times[0,1]\times[0,\infty)$,
\begin{equation}\label{thm3}
|\omega_{\rm loc}(t,x,y)|\lesssim |y-y_\ast|^{7/4}\|\omega_0\|_{H^4(\T\times[0,1])},\quad |\omega_{\rm nloc}(t,x,y)|\lesssim \frac{1}{\langle t\rangle^{7/8}}\|\omega_0\|_{H^4(\T\times[0,1])}.
\end{equation}

\end{theorem}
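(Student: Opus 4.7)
The plan is to perform a mode-by-mode spectral analysis of the operator $L_k$ defined in \eqref{B11}, followed by a contour-integral representation of the solution. Expanding $\omega(t,x,y)=\sum_{k\in\Z\setminus\{0\}}\omega_k(t,y)e^{ikx}$ (the zero $x$-mode vanishes thanks to \eqref{thm0}), the system \eqref{main} diagonalizes into $\partial_t\omega_k+ikL_k\omega_k=0$. Assumption \ref{Massum} guarantees that the spectrum of $L_k$ is purely continuous and equal to $b([0,1])$, so by a Dunford-type functional calculus
\begin{equation*}
\omega_k(t,y)=\frac{1}{2\pi i}\int_{b([0,1])}e^{-ik\lambda t}\bigl[(L_k-\lambda-i0)^{-1}-(L_k-\lambda+i0)^{-1}\bigr]\omega_k^{(0)}(y)\,d\lambda.
\end{equation*}
All of the stated bounds will follow from $\lambda$-regularity estimates for the boundary values of the resolvent, combined with oscillatory-integral arguments in $\lambda$.

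Next, I would reduce the resolvent equation $(L_k-\lambda)g=\omega_k^{(0)}$ to the inhomogeneous Rayleigh equation for $\Psi_k(\lambda,y):=\int_0^1 G_k(y,z)g(z)\,dz$, namely
\begin{equation*}
-\Psi_k''+k^2\Psi_k+\frac{b''(y)}{b(y)-\lambda}\Psi_k=\frac{\omega_k^{(0)}(y)}{b(y)-\lambda},\qquad \Psi_k(\lambda,0)=\Psi_k(\lambda,1)=0,
\end{equation*}
and build its Green's function from two homogeneous solutions $\psi_k^\pm(\lambda,\cdot)$ anchored at the endpoints. For $\lambda\in b((0,1))\setminus\{b(y_\ast)\}$ the Rayleigh operator has one or two real singular points $y_\lambda\in b^{-1}(\lambda)$, and the boundary values $\psi_k^\pm(\lambda\pm i0,y)$ can be constructed by a Frobenius-type analysis at each singular point; this yields H\"older regularity of the spectral density in $\lambda$ away from the critical layer $\lambda=b(y_\ast)$, along the lines of \cite{JiaL,JiaG} and part I of this series.

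The heart of the matter is a uniform analysis in the regime $\lambda\to b(y_\ast)$, where the two singular points coalesce at $y_\ast$ and the Frobenius construction degenerates. My plan is to deploy a Langer-type transformation: writing $b(y)-\lambda\approx \tfrac12 b''(y_\ast)\bigl((y-y_\ast)^2-\varepsilon^2\bigr)$ with $\varepsilon^2\sim b(y_\ast)-\lambda$, one conjugates the homogeneous Rayleigh equation on a neighborhood of $y_\ast$ to a model Weber/parabolic-cylinder equation whose solutions are known explicitly. From this I would extract (a) quantitative H\"older-in-$\lambda$ continuity of $\psi_k^\pm(\lambda\pm i0,y)$ across $\lambda=b(y_\ast)$, and (b) a sharp vanishing estimate asserting that the spectral density controlling $\omega_k$ is of size $O(|y-y_\ast|^{7/4})$ uniformly in the critical window $|\lambda-b(y_\ast)|\lesssim |y-y_\ast|^2$. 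This replaces the odd/even decomposition exploited in \cite{Dongyi2} and is the conceptual engine of the depletion estimate.

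With these ingredients, the inviscid damping bounds \eqref{th0.5}--\eqref{thm1} follow from two integrations by parts in $\lambda$ in the spectral integral, a change of variables $\lambda\mapsto b(z)$ where convenient, and summation of the resulting $|k|^{-2}$ factor in the Fourier series. For the decomposition \eqref{thm2}--\eqref{thm3}, I would split the spectral integral according to $|\lambda-b(y_\ast)|\lessgtr \langle t\rangle^{-1}$: on the inner piece the pointwise bound from (b) gives directly $|\omega_{\rm loc}|\lesssim |y-y_\ast|^{7/4}$, while on the outer piece a fractional integration by parts in $\lambda$ yields the $\langle t\rangle^{-7/8}$ decay for $\omega_{\rm nloc}$. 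The matching exponents $7/4$ and $7/8$ reflect the natural critical-layer scaling $|y-y_\ast|^2\sim|\lambda-b(y_\ast)|\sim\langle t\rangle^{-1}$. The main obstacle, and the principal technical novelty, is executing the Langer analysis without appeal to symmetry, uniformly for $\lambda<b(y_\ast)$ (two real turning points) and $\lambda>b(y_\ast)$ (a complex conjugate pair), and tracking the vanishing rate at $y_\ast$ with enough precision that the Sobolev budget provided by $\omega_0\in H^4$ suffices for the pointwise estimate \eqref{thm3}.
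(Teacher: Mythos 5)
Your route is genuinely different from the paper's, and while the high-level plan is plausible, it has a gap at the step you understate the most.

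The paper never passes to a Langer/Weber model equation. Instead it defines a modified Green's function $\mathcal{G}_k(y,z;\lambda+i\epsilon)$ by cutting off the critically singular potential $b''/(b-\lambda+i\epsilon)$ between scales $\delta(\lambda)\sim|\lambda-b(y_\ast)|^{1/2}$ and $\delta_0$, and proves quantitative vanishing of $\mathcal{G}_k$ near $y_\ast$ by energy/entanglement inequalities (Lemmas \ref{mGk4}--\ref{mGk30}), exploiting the favorable sign of $\Re V$. The remainder is then a compact perturbation controlled by a limiting absorption principle (Lemmas \ref{T5}, \ref{DeC7}) in two carefully tuned weighted spaces $X_{N,\varrho_k}$ and $X_{L,\varrho_k}$, and the time decay is obtained by two $\partial_\lambda$ derivatives plus the cancellation between $\psi^{+}_{k,\epsilon}$ and $\psi^{-}_{k,\epsilon}$. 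What the paper's approach buys is that the local depletion mechanism and the global boundary-value problem are handled by the same energy structure; the explicit turning-point asymptotics are never needed.

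The gap in your plan is that you invoke Assumption \ref{Massum} only to say the spectrum of $L_k$ is purely continuous, and then proceed as if the Rayleigh Green's function can be built from the two endpoint-anchored homogeneous solutions $\psi_k^\pm$ without further ado. That construction requires a uniform lower bound on the Wronskian of $\psi_k^+$ and $\psi_k^-$, uniformly in $\lambda\in\Sigma$, $k$, and $\epsilon\to 0^+$, and this is precisely where the spectral assumption is needed \emph{quantitatively}. In the paper this is done via the compactness/contradiction arguments behind Lemmas \ref{T5} and \ref{DeC7}, which convert ``no (generalized) embedded eigenvalues'' into an operator bound $\|g+T^\ast g\|\gtrsim\|g\|$ in weighted spaces tailored to the critical layer. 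Your Langer/Frobenius plan supplies the local structure of solutions near $y_\ast$ and near the regular singular points, but it does not by itself rule out the near-degeneration of the boundary-value problem, nor does it explain how the local turning-point model is glued to the channel endpoints with $k$-uniform constants. Without a limiting absorption principle or an equivalent coercivity statement, the ``build the Green's function from $\psi_k^\pm$'' step is not justified, and all downstream estimates inherit the missing lower bound. (A small additional slip: with $b''(y_\ast)>0$, the two real turning points occur for $\lambda>b(y_\ast)$ and the complex-conjugate pair for $\lambda<b(y_\ast)$, the opposite of what you wrote. Also, the exponent $7/4$ does not come out of an explicit Weber computation; it is a deliberate sub-optimal choice $<2$ made to avoid a borderline logarithmic divergence, so ``sharp vanishing $O(|y-y_\ast|^{7/4})$'' is a misnomer.)
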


\subsection{Remarks and main ideas of proof} We have the following remarks on Theorem \ref{thm}. {\it Firstly}, in the above theorem we have not tracked the minimal regularity required for the bounds \eqref{th0.5}, \eqref{thm1} and \eqref{thm3} to hold, and a more careful argument can probably significantly reduce the number of derivatives needed on the initial data $\omega_0$. {\it Secondly}, we note also that the argument here can be applied to non-monotonic shear flows with multiple non-degenerate points, although the presentation will be more complicated.  {\it Thirdly}, a more sophisticated analysis may yield a sharper rate of vorticity depletion with rate 
$$|\omega_{\rm loc}(t,x,y)|\lesssim |y-y_\ast|^{2-}, \quad  |\omega_{\rm nloc}(t,x,y)|\lesssim \langle t\rangle^{-1+}.$$
It is not clear to us though if one can reach the optimal rates of $|y-y_\ast|^{2}$ and $\langle t\rangle^{-1}$. 

We briefly explain the main ideas of the proof. 

By a standard spectral representation formula, see \eqref{F3.4}, it suffices to study the spectral density functions and the associated Rayleigh equation \eqref{F7}. There are two main cases to consider. When the spectral parameter $\lambda$ is not close to the critical value $b(y_\ast)$, the situation is similar to monotonic shear flows and can be treated as in \cite{JiaL}. The main new case is when the spectral parameter $\lambda$ is close to the critical value $b(y_\ast)$. In this case, the Rayleigh equation \eqref{F7} is very singular, and the potential term $\frac{b''(y)}{b(y)-\lambda+i\epsilon}$ has a quadratic singularity roughly of the form $\frac{2}{(y-y_\ast)^2+(\lambda-b(y_\ast))+i\epsilon}$ for $y$ close to $y_\ast$. 

The key observation here, as in \cite{JiaVortex}, is that the potential term $\frac{b''(y)}{b(y)-\lambda+i\epsilon}$ is {\it critically singular} and has real part with a {\it favorable} sign for $1\gg|y-y_\ast|\gg|\lambda-b(y_\ast)|^{1/2}$, which needs to be incorporated as part of the main term. We therefore define a modified Green's function for the main term, see \eqref{mGk1}-\eqref{mGk2}, which has strong vanishing conditions near $y=y_\ast$, leading ultimately to vorticity depletion. After extracting the main terms in the Rayleigh equation \eqref{F7}, the rest of the terms can be treated as compact perturbations, and can be bounded using a limiting absorption principle, see Lemma \ref{DeC7}, thanks to the spectral assumption \ref{Massum}. 

The limiting absorption principle provides preliminary bounds on the spectral density functions $\psi_{k,\epsilon}^{\iota}(y,\lambda)$ with $\iota\in\{\pm\}$. To obtain the desired quantitative decay rates, we take up to two derivatives in $\lambda$ of the spectral density functions, and again use the limiting absorption principle to estimate the resulting derivatives, after extracting the main singular terms. The procedure is more or less straightforward but the calculations are quite lengthy. We refer to \cite{JiaL} also for similar calculations in a simpler setting. Lastly, we note that there are important cancellations between $\psi_{k,\epsilon}^{+}(y,\lambda)$ and $\psi_{k,\epsilon}^{-}(y,\lambda)$ in the limit $\epsilon\to 0+$, which is the reason why we need two versions of the limiting absorption principle, see Lemma \ref{DeC7}, with different weighted spaces. 

\subsection{Notations} We summarize here some notations that are specific for this paper for the reader's conveniences. For positive numbers $\alpha, \beta$, we set $\alpha\wedge\beta:=\min\{\alpha,\beta\}$. We denote for $d>0$, $\Sigma_d:=\{b(y):\,\,y\in[y_\ast-d,y_\ast+d]\}$, $S_d:=[y_\ast-d,y_\ast+d]$. We also denote $\Sigma:=\{b(y):\,y\in[0,1]\}$ and $I:=[0,1]$. For $k\in\Z\backslash\{0\}$, we define for $f\in H^1(I)$ the norm $\|f\|_{H^1_k(I)}:=\|f\|_{L^2(I)}+|k|^{-1}\|f'\|_{L^2(I)}$. 

%
%

\section{Spectral property and representation formula}\label{rep}
Taking Fourier transform in $x$ in the equation \eqref{main} for $\omega$, we obtain that
 \begin{equation}\label{F3.0}
 \partial_t\omega_k+ikb(y)\omega_k-ikb''(y)\psi_k=0,
 \end{equation}
 for $k\in\mathbb{Z}, t\ge0, y\in[0,1]$. In the above, $\omega_k$ and $\psi_k$ are the $k$-th Fourier coefficients of $\omega,\psi$ in $x$ respectively.
 For each $k\in\mathbb{Z}\backslash\{0\}$, recall from \eqref{B11} that for any $g\in L^2(0,1)$,
 \begin{equation}\label{F3.1}
 L_kg(y)=b(y)g(y)+b''(y)\int_0^1G_k(y,z)g(z)dz,
 \end{equation}
 where $G_k$ is the Green's function for the operator $k^2-\frac{d^2}{dy^2}$ on $(0,1)$ with zero Dirichlet boundary condition. 
Then \eqref{F3.0} can be reformulated abstractly as
 \begin{equation}\label{F3.2}
 \partial_t\omega_k+ikL_k\omega_k=0.
 \end{equation}
 
 In contrast to the spectral property of the linearized operator around monotonic shear flows, the spectral property of $L_k$ is less understood, especially on the generation of discrete eigenvalues and embedded eigenvalues. From general spectral theory, we know that the spectrum of $L_k$ consists of the continuous spectrum 
 \begin{equation}\label{continspe1}
 \Sigma:=\big\{b(y):\,y\in[0,1]\big\},
 \end{equation} 
 together with some discrete eigenvalues with nonzero imaginary part which can only accumulate at the set of continuous spectrum $\Sigma$. Unlike the case of monotonic shear flows where the discrete eigenvalues can accumulate only at inflection points of the background shear flow, there appears no simple characterization of the possible accumulation points for non-monotonic shear flows.
 
 Recall that $\lambda\in\Sigma$ is called an embedded eigenvalue if there exists a nontrivial $g\in L^2(0,1)$, such that 
 \begin{equation}\label{F3.20}
 L_kg=\lambda g. 
 \end{equation}
 
 For non-monotonic shear flows, this definition is too restrictive, as accumulation points of discrete eigenvalues may no longer be embedded eigenvalues. To capture the discrete eigenvalues, we recall the following definition of ``generalized embedded eigenvalues", which can be found already in \cite{Dongyi2}, adapted to our setting. 
 
 \begin{definition}\label{emb1}
 We call $\lambda\in\Sigma$ a generalized embedded eigenvalue, if one of the following conditions is satisfied. 
 \begin{itemize}
 \item  $\lambda$ is an embedded eigenvalue.
 
 \item $\lambda\neq b(y_\ast)$ and there exists a nontrivial $\psi\in H^1_0(0,1): (0,1)\to \mathbb{C}$ such that in the sense of distributions on $(0,1)$,
 \begin{equation}\label{emb2}
 (k^2-\partial_y^2)\psi(y)+{\rm P.V.}\frac{b''(y)\psi(y)}{b(y)-\lambda}+i\pi\sum_{z\in [0,1], \,b(z)=\lambda}\frac{b''(z)\psi(z)}{|b'(z)|}\delta(y-z)=0.
 \end{equation}
 
 \end{itemize}
 
 \end{definition}
 
 We remark that our assumption that the critical point $y_\ast$ of $b(y)$ being non-degenerate implies that the sum in \eqref{emb2} is finite, and that the spectral assumption \ref{Massum} is satisfied if $b''>0$ on $[0,1]$. 
 

 \begin{proposition}\label{F3.3}
 Suppose that $k\in\Z\backslash\{0\}$ and $\omega^k_0\in L^2([0,1])$. Then the stream function $\psi_k(t,y)$ for $k\in\mathbb{Z}\backslash\{0\}, y\in[0,1], t\ge0$ has the representation 
 \begin{equation}\label{F3.4}
 \psi_k(t,y)=-\frac{1}{2\pi i}\lim_{\epsilon\to0+}\int_{\Sigma}e^{-ik\lambda t}\left[\psi_{k,\epsilon}^{-}(y,\lambda)-\psi_{k,\epsilon}^{+}(y,\lambda)\right]d\lambda,
 \end{equation}
 where $\psi_{k,\epsilon}^{\iota}(y,\lambda)$ for $\iota\in\{+,-\}, \,y\in[0,1], \,\lambda\in\Sigma,k\in\mathbb{Z}\backslash\{0\}$,  and  sufficiently small $\epsilon\in[-1/4,1/4]\backslash\{0\}$, are the solutions to
 \begin{equation}\label{F7}
 \begin{split}
  -k^2\psi_{k,\epsilon}^{\iota}(y,\lambda)+\frac{d^2}{dy^2}\psi_{k,\epsilon}^{\iota}(y,\lambda)-\frac{b''(y)}{b(y)-\lambda+i\iota\epsilon}\psi_{k,\epsilon}^{\iota}(y,\lambda)=\frac{-\omega^k_0(y)}{b(y)-\lambda+i\iota\epsilon},
 \end{split}
 \end{equation}
 with zero Dirichlet boundary condition.
 \end{proposition}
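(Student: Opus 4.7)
The plan is to apply the Dunford--Riesz functional calculus to the closed operator $L_k$ defined in \eqref{F3.1}, and then pass from the resulting spectral representation of $\omega_k$ to one for $\psi_k$ via the relation $\psi_k=-G_k\omega_k$ coming from $\Delta\psi=\omega$ with zero Dirichlet boundary conditions. From \eqref{F3.2} we have $\omega_k(t)=e^{-ikL_kt}\omega_0^k$, and since $L_k$ is a compact perturbation of the bounded multiplication operator by $b(y)$ (the perturbation being the integral operator with kernel $b''(y)G_k(y,z)$), its essential spectrum equals $\Sigma$. Assumption~\ref{Massum}(2) excludes discrete eigenvalues in $\C\setminus\Sigma$, so the full spectrum of $L_k$ is $\Sigma$, and the Dunford--Riesz formula
\begin{equation*}
e^{-ikL_kt}\omega_0^k=-\frac{1}{2\pi i}\oint_\Gamma e^{-ik\mu t}(L_k-\mu)^{-1}\omega_0^k\,d\mu
\end{equation*}
holds for any simple closed contour $\Gamma\subset\C\setminus\Sigma$ enclosing $\Sigma$.

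Next I would identify the resolvent explicitly. For $\mu\notin\Sigma$, given $g=(L_k-\mu)^{-1}\omega_0^k$ and setting $\tpsi(y)=\int_0^1 G_k(y,z)g(z)\,dz$ so that $(k^2-\partial_y^2)\tpsi=g$ with $\tpsi(0)=\tpsi(1)=0$, substituting into the defining identity $(b(y)-\mu)g(y)+b''(y)\tpsi(y)=\omega_0^k(y)$ and dividing by $b(y)-\mu\neq 0$ yields
\begin{equation*}
-k^2\tpsi+\partial_y^2\tpsi-\frac{b''(y)}{b(y)-\mu}\tpsi=\frac{-\omega_0^k(y)}{b(y)-\mu},
\end{equation*}
which matches \eqref{F7} under the identification $\mu=\lambda-i\iota\eps$; hence $\tpsi=\psi_{k,\eps}^{\iota}(\cdot,\lambda)$. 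I would then collapse $\Gamma$ onto two horizontal segments at heights $\pm i\eps$ flanking $\Sigma$ joined by short vertical caps, whose contributions vanish as $\eps\to 0^+$ by boundedness of $(L_k-\mu)^{-1}$ at the endpoints of $\Sigma$. Under counterclockwise orientation the two horizontal pieces are traversed in opposite directions; applying $-G_k$ to both sides (to convert $\omega_k$ into $\psi_k$), the top piece produces $\psi_{k,\eps}^-$ and the bottom $\psi_{k,\eps}^+$, and combining signs reproduces \eqref{F3.4}.

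The main obstacle is justifying the limit $\eps\to 0^+$ inside the integral over $\Sigma$: each individual resolvent $(L_k-(\lambda\mp i\eps))^{-1}$ becomes singular as $\eps\to 0^+$, and the boundary values $\psi_{k,0\pm}^{\iota}$ need not even exist as classical functions on $\Sigma$. Here the generalized embedded eigenvalue hypothesis in Assumption~\ref{Massum}(2), together with the limiting absorption principle furnished by Lemma~\ref{DeC7}, is essential: it provides uniform-in-$\eps$ a priori bounds on $\psi_{k,\eps}^{\iota}(\cdot,\lambda)$ in suitably weighted function spaces and well-defined limits in appropriate weak or distributional senses. This justifies the exchange of the $\eps\to 0^+$ limit with the $\lambda$-integral and completes the derivation of \eqref{F3.4}; the finite regularity of $\omega_0^k\in L^2$ is adequate because only one power of the resolvent acts on $\omega_0^k$ in this formula, in contrast to the higher-order quantitative estimates in Theorem~\ref{thm} for which further $\lambda$-derivatives of $\psi_{k,\eps}^{\iota}$ must be controlled.
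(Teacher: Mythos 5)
Your proposal is correct and takes essentially the same route as the paper. The paper invokes ``standard theory of spectral projection'' to produce \eqref{F4} and then applies $-G_k$ and the definition of $L_k$ to derive \eqref{F7}; you unpack that citation into an explicit Dunford--Riesz contour integral collapsed onto two lines $\Im\mu=\pm\epsilon$, and you run the same algebra (pass from $(L_k-\mu)g=\omega_0^k$ to the Rayleigh ODE by dividing by $b-\mu$) in the opposite direction. Your sign/orientation bookkeeping is consistent with \eqref{F6}--\eqref{FF7}. The one point worth flagging: your appeal to Lemma~\ref{DeC7} for justifying the $\epsilon\to0^+$ limit is slightly anachronistic and somewhat misdirected --- \eqref{F3.4} keeps the limit outside the $\lambda$-integral, and what is really needed is solvability of \eqref{F7} for small $\epsilon\neq 0$ (so the Dunford integral on $\Gamma_\epsilon$ makes sense) plus contour-independence of that integral, which is exactly what the paper defers to Remark~\ref{F6.1} and the limiting absorption lemmas (Lemma~\ref{T5} off the critical value, Lemma~\ref{DeC7} near it). This is a difference in emphasis rather than a gap.
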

 
 \begin{proof}
By standard theory of spectral projection, from \eqref{F3.2}, we obtain that for $y\in[0,1]$,
 \begin{equation}\label{F4}
 \begin{split}
 \omega_k(t,y)&=\frac{1}{2\pi i}\lim_{\epsilon\to0+}\int_{\Sigma}e^{i\lambda t}\left\{\left[(\lambda+kL_k-i\epsilon)^{-1}-(\lambda+kL_k+i\epsilon)^{-1}\right]\omega^k_0\right\}(y)\,d\lambda.
 \end{split}
 \end{equation}
 
We then obtain for $y\in[0,1]$,
 \begin{equation}\label{F5}
 \begin{split}
 \psi_k(t,y)&=-\frac{1}{2\pi i}\lim_{\epsilon\to0+}\int_{\Sigma}e^{-ik\lambda t}\int_0^1G_k(y,z)\\
 &\hspace{1in}\times\bigg\{\Big[(-\lambda+L_k-i\epsilon)^{-1}-(-\lambda+L_k+i\epsilon)^{-1}\Big]\omega^k_0\bigg\}(z)\,dz d\lambda\\
 &=-\frac{1}{2\pi i}\lim_{\epsilon\to0+}\int_{\Sigma}e^{-ik\lambda t}\left[\psi_{k,\epsilon}^{-}(y,\lambda)-\psi_{k,\epsilon}^{+}(y,\lambda)\right]d\lambda.
 \end{split}
 \end{equation}
 In the above, for $y\in[0,1]$ and $\lambda\in\Sigma$,
 \begin{equation}\label{F6}
 \begin{split}
 &\psi_{k,\epsilon}^{+}(y,\lambda):=\int_0^1G_k(y,z)\Big[(-\lambda+L_k+i\epsilon)^{-1}\omega^k_0\Big](z)\,dz,\\
 &\psi_{k,\epsilon}^{-}(y,\lambda):=\int_0^1G_k(y,z)\Big[(-\lambda+L_k-i\epsilon)^{-1}\omega^k_0\Big](z)\,dz.
 \end{split}
 \end{equation}
 Therefore for $\iota\in\{+,-\}, y\in[0,1], \lambda\in\Sigma$,
 \begin{equation}\label{FF6}
 \left(k^2-\frac{d^2}{dy^2}\right)\psi_{k,\epsilon}^{\iota}(y,y_0)=(-\lambda+L_k+i\iota\epsilon)^{-1}\omega^k_0(y),
 \end{equation}
 which implies
 \begin{equation}\label{FF7}
 \begin{split}
 \omega_0^k(y)=&(-\lambda+L_k+i\iota\epsilon)\left(k^2-\frac{d^2}{dy^2}\right)\psi_{k,\epsilon}^{\iota}(y,\lambda)\\
                          =&(b(y)-\lambda+i\iota\epsilon)\left(k^2-\frac{d^2}{dy^2}\right)\psi_{k,\epsilon}^{\iota}(y,\lambda)+b''(y)\psi_{k,\epsilon}^{\iota}(y,\lambda).
 \end{split}
 \end{equation}
It follows from \eqref{FF7} that $\psi_{k,\epsilon}^{+}(y,\lambda), \psi_{k,\epsilon}^{-}(y,\lambda)$ satisfy \eqref{F7}. The proposition is now proved.

\end{proof}

 
\begin{remark}\label{F6.1}
The existence of $\psi^{\iota}_{k,\epsilon}$ for sufficiently small $\epsilon\neq0$ follows from our spectral assumptions, which imply the solvability of \eqref{F7} for sufficiently small $\epsilon\neq0$, see also \eqref{T6}.

\end{remark}

\section{Bounds on the Green's function and modified Green's function}
\subsection{Elementary properties of the standard Green's function}\label{sub1}
For integers $k\in\mathbb{Z}\setminus\{0\}$, recall that the Green's function $G_k(y,z)$ solves  
\begin{equation}\label{eq:Helmoltz}
-\frac{d^2}{dy^2}G_k(y,z)+k^2G_k(y,z)=\delta(y-z),
\end{equation}
with Dirichlet boundary conditions $G_k(0,z)=G_k(1,z)=0$, $z\in (0,1)$. $G_k$ has the explicit formula 
\begin{equation}\label{eq:GreenFunction}
G_k(y,z)=\frac{1}{k\sinh k}
\begin{cases}
\sinh(k(1-z))\sinh (ky)\qquad&\text{ if }y\leq z,\\
\sinh (kz)\sinh(k(1-y))\qquad&\text{ if }y\geq z,
\end{cases}
\end{equation}
and the symmetry
\begin{equation}\label{Gk2}
G_k(y,z)=G_k(z,y), \qquad {\rm for}\,\,k\in\mathbb{Z}\backslash\{0\}, y, z\in[0,1].
\end{equation}

We note the following bounds for $G_k$
\begin{equation}\label{Gk1.1}
\begin{split}
&\sup_{y\in[0,1], |A|\leq10}\bigg[|k|^2\big\|G_k(y,z)(\log{|z-A|})^{m}\big\|_{L^1(z\in[0,1])}+|k|\big\|\partial_{y,z}G_k(y,z)(\log{|z-A|})^{m}\big\|_{L^1(z\in[0,1])}\bigg]\\
&\qquad+\sup_{y\in[0,1],\alpha\in\{0,1\}}\bigg[|k|^{3/2-\alpha}\left\|\partial_{y,z}^{\alpha}G_k(y,z)\right\|_{L^2(z\in[0,1])}\bigg]\lesssim |\log{\langle k\rangle}|^m,\qquad {\rm for}\,\,m\in\{0,1,2,3\}.
\end{split}
\end{equation}

Define
\begin{equation}\label{bX5}
F_k(y,z)=\frac{1}{\sinh{k}}\left\{\begin{array}{lr}
                                         -k\cosh{(k(1-z))}\cosh{(ky)}, &0\leq y\leq z\leq 1;\\
                                         -k\cosh{(kz)}\cosh{(k(1-y))}, &1\ge y>z\ge0.
                                          \end{array}\right.
\end{equation}
We note that
\begin{equation}\label{Gk1.2}
\partial_y\partial_zG_k(y,z)=\partial_z\partial_yG_k(y,z)=\delta(y-z)+F_k(y,z),\qquad{\rm for}\,\,y,z\in[0,1].
\end{equation}

By direct computation, we see $F_k$ satisfies the bounds
\begin{equation}\label{Gk3.1}
\begin{split}
&\sup_{y\in[0,1], |A|\leq10}\bigg[\big\|F_k(y,z)(\log{|z-A|})^{m}\big\|_{L^1(z\in[0,1])}+|k|^{-1}\big\|\partial_{y,z}F_k(y,z)(\log{|z-A|})^{m}\big\|_{L^1(z\in[0,1])}\bigg]\\
&\quad+\sup_{y\in[0,1],\alpha\in\{0,1\}}\bigg[|k|^{-1/2-\alpha}\left\|\partial_{y,z}^{\alpha}F_k(y,z)\right\|_{L^2(z\in[0,1])}\bigg]\lesssim |\log{\langle k\rangle}|^m,\qquad {\rm for}\,\,m\in\{0,1,2,3\}.
\end{split}
\end{equation}

The bounds \eqref{Gk1.1} and \eqref{Gk3.1} can be proved by explicit calculations and are useful in the proof of Lemma \ref{T2} below.


\subsection{Bounds on the modified Green's function}
It follows from Assumption \ref{Massum} that there exists a $\delta_0\in(0,1/8)$ such that
\begin{equation}\label{mGk0}
\inf\{|y_\ast|, |y_\ast-1|\}>10\delta_0 \quad{\rm and}\quad\sup_{y\in(y_\ast-4\delta_0,y_\ast+4\delta_0)}|b'''(y)|\delta_0<|b''(y_\ast)|/10.
\end{equation}

Define the set
\begin{equation}\label{mGk0.2}
\Sigma_{\delta_0}:=\{b(y): y\in[y_\ast-\delta_0,y_\ast+\delta_0]\},
\end{equation}
and fix a standard smooth cutoff function $\varphi\in C_c^\infty(-2,2)$ satisfying $\varphi\equiv 1$ on $[-3/2,3/2]$. For simplicity of notations, we denote
\begin{equation}\label{LAP0.1}
I:=(0,1).
\end{equation}
To simplify notations we define also for $d\in(0,1/10)$,
\begin{equation}\label{LAP0.01}
S_d:=[y_\ast-d,y_\ast+d]. 
\end{equation}

For applications below, we also need to study the ``modified Green's function" $\mathcal{G}_{k}(y,z;\lambda+i\epsilon)$ for $ y,z\in[0,1], \lambda\in \Sigma_{\delta_0}$ and $\epsilon\in[-1/8,1/8]\backslash\{0\}$, which satisfies for $y,z\in(0,1),$
\begin{equation}\label{mGk1}
(k^2-\partial_y^2)\mathcal{G}_{k}(y,z;\lambda+i\epsilon)+\frac{b''(y)}{b(y)-\lambda+i\epsilon}\Big[\varphi\big(\frac{y-y_\ast}{\delta_0}\big)-\varphi\big(\frac{y-y_\ast}{\delta(\lambda)}\big)\Big]\mathcal{G}_{k}(y,z;\lambda+i\epsilon)=\delta(y-z),
\end{equation}
with the boundary condition 
\begin{equation}\label{mGk2}
\mathcal{G}_{k}(y,z;\lambda+i\epsilon)|_{y\in\{0,1\}}=0.
\end{equation}
In the above, we have used the notation that
\begin{equation}\label{mGk3}
\delta(\lambda):=8\sqrt{|\lambda-b(y_\ast)|/b''(y_\ast)}.
\end{equation}

Define the weight 
 $\varrho(y;\lambda+i\epsilon)$ for $y,z\in[0,1], \lambda\in \Sigma_{\delta_0}$ and $\epsilon\in[-1/8,1/8]\backslash\{0\}$ as
\begin{equation}\label{DeC1}
\begin{split}
\varrho(y;\lambda+i\epsilon):=&|\lambda-b(y_\ast)|^{1/2}+|\epsilon|^{1/2}+|y-y_\ast|.
\end{split}
\end{equation}
The crucial bounds we need for the modified Green's function $\mathcal{G}_k(y,z;\lambda+i\epsilon)$ is the following. 
\begin{lemma}\label{mGk4}
Let $\mathcal{G}_{k}(y,z;\lambda+i\epsilon)$ for $y,z\in[0,1], \lambda\in \Sigma_{\delta_0}$ and $\epsilon\in[-1/8,1/8]\backslash\{0\}$ be defined as in \eqref{mGk1}.
Then we have the identity for $y,z\in [0,1]$,
\begin{equation}\label{mGk4.0}
\mathcal{G}_{k}(y,z;\lambda+i\epsilon)=\mathcal{G}_{k}(z,y;\lambda+i\epsilon),
\end{equation}
and the following statements hold. 

(i) We have the bounds
\begin{equation}\label{mGk5}
\begin{split}
&\sup_{y\in[0,1],\, |y-z|\leq \min\{\varrho(z;\lambda+i\epsilon), 1/|k|\}}|\mathcal{G}_{k}(y,z;\lambda+i\epsilon)|\lesssim\min\{\varrho(z;\lambda+i\epsilon), 1/|k|\},\\
&\sup_{y\in[0,1],\, |y-z|\leq \min\{\varrho(z;\lambda+i\epsilon), 1/|k|\}}|\partial_y\mathcal{G}_{k}(y,z;\lambda+i\epsilon)|\lesssim1;
\end{split}
\end{equation}

(ii) For $y_1, y_2\in[0,1]$ with $y_2\in[\min\{y_1,z\}, \max\{y_1,z\}]$ and $\varrho(y_2;\lambda+i\epsilon)\gtrsim 1/|k|$, we have the bounds with $\alpha\in\{0,1\}$
\begin{equation}\label{mGk5.6}
\begin{split}
&|\partial_y^\alpha\mathcal{G}_{k}(y_1,z;\lambda+i\epsilon)|\\
&\lesssim \Big[|k|+\varrho^{-1}(y_1;\lambda+i\epsilon)\Big]^\alpha e^{-|k||y_1-y_2|}\bigg[|k|\int_{[y_2- 1/|k|,y_2+ 1/|k|]\cap I}|\mathcal{G}_{k}(y,z;\lambda+i\epsilon)|^2\,dy\bigg]^{1/2}.
\end{split}
\end{equation}

(iii) For $y_1, y_2\in[0,1]$ with $y_2\in[\min\{y_1,z\}, \max\{y_1,z\}]$ and $\varrho(y_2;\lambda+i\epsilon)\ll 1/|k|$, we have the bounds with $\alpha\in\{0,1\}$
\begin{equation}\label{mGk5.61}
\begin{split}
&|\partial_y^\alpha\mathcal{G}_{k}(y_1,z;\lambda+i\epsilon)|\lesssim \Big[|k|+\varrho^{-1}(y_1;\lambda+i\epsilon)\Big]^\alpha\min\bigg\{\frac{\varrho^2(y_1;\lambda+i\epsilon)}{\varrho^2(y_2;\lambda+i\epsilon)},\,\frac{\varrho(y_2;\lambda+i\epsilon) }{\varrho(y_1;\lambda+i\epsilon)}\bigg\}M,
\end{split}
\end{equation}
where 
\begin{equation}\label{mGk5.7}
 M:=\bigg[\frac{1}{\varrho(y_2;\lambda+i\epsilon)}\int_{[y_2-\varrho(y_2;\lambda+i\epsilon),y_2+\varrho(y_2;\lambda+i\epsilon)]\cap I}|\mathcal{G}_{k}(y,z;\lambda+i\epsilon)|^2\,dy\bigg]^{1/2}.
\end{equation}

\end{lemma}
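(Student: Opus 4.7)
The strategy rests on the following structural observation: on the support of $\varphi(\tfrac{\cdot-y_\ast}{\delta_0})-\varphi(\tfrac{\cdot-y_\ast}{\delta(\lambda)})$ we have $|y-y_\ast|\geq \tfrac{3}{2}\delta(\lambda)$, and a Taylor expansion of $b$ around $y_\ast$ combined with \eqref{mGk3} yields
\[
b(y)-\lambda=\tfrac{1}{2}b''(y_\ast)(y-y_\ast)^2-(\lambda-b(y_\ast))+O(|y-y_\ast|^3)\geq \tfrac{1}{4}b''(y_\ast)(y-y_\ast)^2>0.
\]
Consequently $\operatorname{Re}[b''(y)/(b(y)-\lambda+i\epsilon)]\geq 0$ with leading profile $\approx 2/(y-y_\ast)^2$, so the modified Helmholtz operator in \eqref{mGk1} is a perturbation of the Bessel operator $-\partial_y^2+2(y-y_\ast)^{-2}$, whose fundamental solutions are $(y-y_\ast)^2$ and $(y-y_\ast)^{-1}$. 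All quantitative bounds in the lemma mirror this model, with the weight $\varrho$ of \eqref{DeC1} playing the role of an effective radius from $y_\ast$. The symmetry \eqref{mGk4.0} is immediate since \eqref{mGk1} contains no first-order derivative: integrating $\mathcal{G}_k(y,z)L[\mathcal{G}_k(y,z')]-\mathcal{G}_k(y,z')L[\mathcal{G}_k(y,z)]$ over $[0,1]$, the boundary contributions vanish by the Dirichlet conditions after two integrations by parts, and the delta sources yield $\mathcal{G}_k(z',z)=\mathcal{G}_k(z,z')$.

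For part (i), I would use the integral equation
\[
\mathcal{G}_k(y,z;\lambda+i\epsilon)=G_k(y,z)-\int_0^1 G_k(y,y')V(y';\lambda+i\epsilon)\mathcal{G}_k(y',z;\lambda+i\epsilon)\,dy',
\]
where $V$ denotes the full potential appearing in \eqref{mGk1}. In the regime $\varrho(z;\lambda+i\epsilon)\gtrsim 1/|k|$ one has $|V(y)|\lesssim \varrho^{-2}(y)\lesssim k^2$ on the ball $|y-z|\leq 1/|k|$, so the correction is a controllable perturbation of $G_k$, whose pointwise size $\lesssim 1/|k|$ and derivative size $\lesssim 1$ are supplied by \eqref{Gk1.1}. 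In the regime $\varrho(z;\lambda+i\epsilon)\ll 1/|k|$, the source sits near or inside the inner ``hole'' $\{|y-y_\ast|\leq \tfrac{3}{2}\delta(\lambda)\}$ on which $V\equiv 0$, while in the surrounding Bessel shell $V$ acts as a large positive trap. I would represent $\mathcal{G}_k(\cdot,z)$ via the Wronskian formula built from two homogeneous solutions $\phi_-,\phi_+$ satisfying $\phi_-(0)=\phi_+(1)=0$; asymptotic matching across the trap, where the growing Bessel solution $(y-y_\ast)^2$ dominates the exponentially small decaying one, forces $|\phi_\pm(z)|\sim \varrho(z;\lambda+i\epsilon)$ with a Wronskian of order one, producing the improved bound $|\mathcal{G}_k(y,z;\lambda+i\epsilon)|\lesssim \varrho(z;\lambda+i\epsilon)$ on the small ball $|y-z|\leq\varrho(z;\lambda+i\epsilon)$. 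The derivative bound follows by differentiating the integral equation.

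For parts (ii) and (iii), the key observation is that since $y_2$ lies between $y_1$ and $z$, the function $\mathcal{G}_k(\cdot,z;\lambda+i\epsilon)$ solves the homogeneous ODE $Lu=0$ on the segment connecting $y_1$ to $y_2$. I would propagate the $L^2$-average $M$ from a window around $y_2$ to a pointwise estimate at $y_1$ via an ODE energy argument. For (ii), where $\varrho(y_2;\lambda+i\epsilon)\gtrsim 1/|k|$, the $k^2$ term dominates $V$ along the path, and the standard exponential-decay estimate for $-\partial_y^2+k^2+V$ with $\operatorname{Re}V\geq 0$---obtained by testing against $e^{2|k||y-y_1|}\bar u$ and exploiting ellipticity---produces the factor $e^{-|k||y_1-y_2|}$ times $M$; the derivative bound gains $|k|+\varrho^{-1}(y_1;\lambda+i\epsilon)$ by standard interior regularity. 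For (iii), where $\varrho(y_2;\lambda+i\epsilon)\ll 1/|k|$, the $k^2$ term is subdominant and the equation is to leading order Bessel. Writing $\mathcal{G}_k(y,z)=\alpha\,u_1(y)+\beta\,u_2(y)$ on the homogeneous segment with $u_1\sim(y-y_\ast)^2$ and $u_2\sim(y-y_\ast)^{-1}$, the coefficients $\alpha,\beta$ are controlled by $M$ via the Wronskian, and evaluating at $y_1$ bounds $|\mathcal{G}_k(y_1,z)|$ by either $|\alpha||u_1(y_1)|\lesssim M\varrho^2(y_1)/\varrho^2(y_2)$ (effective when $\varrho(y_1)<\varrho(y_2)$, i.e.\ $y_1$ closer to $y_\ast$) or $|\beta||u_2(y_1)|\lesssim M\varrho(y_2)/\varrho(y_1)$ (effective when $\varrho(y_1)>\varrho(y_2)$); the smaller of the two options yields \eqref{mGk5.61}. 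The derivative bound gains a factor of $\varrho^{-1}(y_1;\lambda+i\epsilon)$ from the Bessel scaling of $u_1,u_2$.

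The main obstacle is the rigorous execution of the Bessel comparison in (iii) together with control of the imaginary part $\operatorname{Im}V=-\epsilon b''(y)/((b(y)-\lambda)^2+\epsilon^2)$, which breaks real self-adjointness of the operator. On the support of the cutoff one has $(b(y)-\lambda)^2\gtrsim (y-y_\ast)^4$, and the weight-level restriction $|y-y_\ast|\gtrsim |\epsilon|^{1/2}$ (which is implicit in any regime where $\varrho\gtrsim |\epsilon|^{1/2}$) forces $|\operatorname{Im}V|\lesssim |\operatorname{Re}V|$, so the imaginary part is a relative perturbation absorbable in the ODE arguments. Propagating this absorption uniformly through the transition regimes $\varrho\sim 1/|k|$ and $\varrho\sim\delta(\lambda)$ is what makes the proof long, but essentially routine once the Bessel model is in place.
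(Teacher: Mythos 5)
Your symmetry argument matches the paper's. The quantitative bounds, however, are handled by a genuinely different route. The paper works entirely with weighted energy (``entanglement'') inequalities: for (i) it multiplies the equation by $\overline{h}$ and drops the $\Re V\geq 0$ term to get $\int|\partial_yh|^2+k^2|h|^2\lesssim|h(z)|$, then applies a Sobolev inequality on a ball of radius $\min\{\varrho(z),1/|k|\}$; for (ii)--(iii) it multiplies by $\varphi^2\overline{h}$ for a carefully designed piecewise-$C^1$ cutoff (growing like $e^{|k|\cdot}$ in the Helmholtz regime, and, after the substitution $h=(y-y_\ast)^{1/2}h^\ast$, growing at the rate dictated by $\big[1/4+(y-y_\ast)^2\Re V\big]^{1/2}/(y-y_\ast)$ in the Bessel regime). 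Your plan instead uses a Neumann-series perturbation of $G_k$, a two-sided Wronskian representation, and matched asymptotics. These are not cosmetically different: the paper's method never needs to construct or control homogeneous solutions, and it uses only the scalar sign information $\Re V\geq 0$.

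The most concrete gap is in your treatment of (i) in the regime $\varrho(z;\lambda+i\epsilon)\gtrsim 1/|k|$. You write
\begin{equation*}
\mathcal{G}_k(y,z)=G_k(y,z)-\int_0^1 G_k(y,y')V(y')\mathcal{G}_k(y',z)\,dy'
\end{equation*}
and call the integral ``a controllable perturbation'' because $|V|\lesssim k^2$ near $z$. But $\|G_k(y,\cdot)\|_{L^1}\lesssim k^{-2}$, so the operator $f\mapsto\int G_k(y,y')V(y')f\,dy'$ has norm of order $1$, not small; moreover $V$ is much larger than $k^2$ in the annulus where $\varrho(y')\ll 1/|k|$, which is not excluded from the integral. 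Nothing in the Neumann series converges a priori, and the estimate is circular unless you already have an a priori bound on $\mathcal{G}_k$---which is exactly what the lemma is supposed to supply. The paper's energy estimate bypasses this entirely by testing the equation rather than inverting $k^2-\partial_y^2$.

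The second place that needs real work is (iii). You posit $\mathcal{G}_k=\alpha u_1+\beta u_2$ with $u_1\sim(y-y_\ast)^2$, $u_2\sim(y-y_\ast)^{-1}$ and control $\alpha,\beta$ via a Wronskian. But $u_1,u_2$ are solutions of the Bessel \emph{model}, not of the actual equation with the cutoffs $\varphi(\tfrac{\cdot-y_\ast}{\delta_0})-\varphi(\tfrac{\cdot-y_\ast}{\delta})$, a complex $\epsilon$-shift, and non-negligible corrections $O(|y-y_\ast|^3)$ in $b(y)-\lambda$. Establishing that the true homogeneous solutions track the Bessel profile uniformly across the transition scales $\varrho\sim\delta$, $\varrho\sim|\epsilon|^{1/2}$, and $\varrho\sim 1/|k|$, and that the Wronskian stays bounded away from zero uniformly in $(k,\lambda,\epsilon)$, is a substantial argument that you only gesture at (you acknowledge this is ``the main obstacle''). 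The paper's choice of the test-function slope $\big[1/4+(y-y_\ast)^2\Re V\big]^{1/2}/(y-y_\ast)$ in \eqref{mGk23} automatically interpolates through all these regimes and produces the $\min\{\varrho^2(y_1)/\varrho^2(y_2),\,\varrho(y_2)/\varrho(y_1)\}$ rate without any asymptotic matching, which is the main technical advantage of that approach. Your outline for (ii), by contrast, is essentially equivalent to Step~2 of the paper's proof.
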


\begin{proof}

The proof is based on energy estimates and ``entanglement inequalities", as in \cite{JiaUn}. See also the earlier work \cite{Wei3} where this type of inequality was used. We divide the proof into several steps. 

{\bf Step 1: the proof of \eqref{mGk5}.} We first establish the bounds \eqref{mGk5}. 
For simplicity of notation, we suppress the dependence on $z, \lambda+i\epsilon$ and set for $y\in[0,1]$,
\begin{equation}\label{mGk5.8}
h(y):=\mathcal{G}_{k}(y,z;\lambda+i\epsilon), \quad V(y):= \frac{b''(y)}{b(y)-\lambda+i\epsilon}\Big[\varphi\big(\frac{y-y_\ast}{\delta_0}\big)-\varphi\big(\frac{y-y_\ast}{\delta}\big)\Big].
\end{equation}
Multiplying $\overline{h}$ to \eqref{mGk1} and integrating over $[0,1]$, we obtain that 
\begin{equation}\label{mGk6}
\int_0^1|\partial_yh(y)|^2+|k|^2|h(y)|^2\,dy+\int_0^1V(y)|h(y)|^2\,dy=\overline{h}(z). 
\end{equation}
Note that for $y\in [0,1]$, $\Re V(y)\ge0$, and in addition, 
 for  $y\in S_{\delta_0}$ and 
 $$|y-y_\ast|>C_0\big(|\lambda-b(y_\ast)|^{1/2}+|\epsilon|^{1/2}\big)$$ with sufficiently large $C_0\gg1$,
 \begin{equation}\label{mGk8}
1+ \Re V(y)\gtrsim \frac{1}{\varrho^2(y;\lambda+i\epsilon)}.
 \end{equation}
 It follows from \eqref{mGk6} that 
\begin{equation}\label{mGk9}
\begin{split}
&\int_0^1|\partial_yh(y)|^2+|k|^2|h(y)|^2\,dy+\int_{y\in S_{\delta_0},\, \,|y-y_\ast|>C_0(\delta+|\epsilon|^{1/2})} \frac{1}{\big[\varrho(y;\lambda+i\epsilon)\big]^2}|h(y)|^2\,dy\\
&\lesssim |h(z)|. 
\end{split}
\end{equation}
Using the Sobolev type inequality 
\begin{equation}\label{mGk10}
\|h\|_{L^\infty(J)}\lesssim \|h\|_{L^2(J_\ast)}|J|^{-1/2}+\|\partial_yh\|_{L^2(J)}|J|^{1/2},
\end{equation}
for any intervals $J, J_\ast$ with $J_\ast\subseteq J$ and $|J_\ast|\gtrsim |J|$, and choosing the interval $J\subset I$ as an interval containing $z$ with length of the size $C_1\min\{1/|k|, \varrho(z;\lambda+i\epsilon)\}$, we obtain from \eqref{mGk9} that 
\begin{equation}\label{mGk11}
\begin{split}
&\int_0^1|\partial_yh(y)|^2+|k|^2|h(y)|^2\,dy+\int_{y\in S_{\delta_0},\,\,|y-y_{\ast}|>C_0(\delta+|\epsilon|^{1/2})} \frac{1}{\big[\varrho(y;\lambda+i\epsilon)\big]^2}|h(y)|^2\,dy\\
&\lesssim \min\{1/|k|, \varrho(z;\lambda+i\epsilon)\}. 
\end{split}
\end{equation}
The first inequality of \eqref{mGk5} then follows from \eqref{mGk11} and \eqref{mGk10}. To obtain the second inequality in \eqref{mGk5}, we notice that the equation \eqref{mGk1} and the first inequality in \eqref{mGk5} imply pointwise bounds on $\partial_y^2\mathcal{G}_k$ for $y\neq z$. The desired bound in the second inequality of \eqref{mGk5} then follows from interpolation between the pointwise bounds on $\mathcal{G}_k$ and $\partial_y^2\mathcal{G}_k$. 

{\bf Step 2: the proof of \eqref{mGk5.6}.} Denote
\begin{equation}\label{mGk12.1}
M_1:=\bigg[|k|\int_{[y_2- 1/|k|,y_2+ 1/|k|]\cap I}|\mathcal{G}_{k}(y,z;\lambda+i\epsilon)|^2\,dy\bigg]^{1/2}.
\end{equation}
For the sake of concreteness, we assume that $y_1>z$ (so $y_2\in[z,y_1]$). We shall also assume that $y_1-y_2\gg 1/|k|$ as the other case is analogous but easier. For $\varphi\in C^1_p([y_2, 1])$, the space of piecewise $C^1$ functions, with $\varphi(y_2)=0$, we multiply $\varphi^2\overline{h}$ to equation \eqref{mGk1} and integrate over $[y_2,1]$ to obtain that
\begin{equation}\label{mGk13}
\int_{y_2}^1|\partial_yh(y)|^2\varphi^2(y)+2\partial_yh(y)\overline{h(y)}\varphi(y)\partial_y\varphi(y)+|k|^2\varphi^2(y)|h(y)|^2+V(y)|h(y)|^2\varphi^2(y)\,dy=0.
\end{equation}
Taking the real part of \eqref{mGk13} and using Cauchy-Schwarz inequality, we get that 
\begin{equation}\label{mGk14}
\int_{y_2}^1\big[|\partial_y\varphi(y)|^2-|k|^2|\varphi(y)|^2\big]|h(y)|^2\,dy\ge0.
\end{equation}
We now choose $\varphi$ more specifically as follows. We require that 
\begin{equation}\label{mGk15}
\begin{split}
&\varphi(y_2)=0, \,\, \varphi''(y)=0\,\,{\rm for}\,\,y\in[y_2,y_2+1/|k|], \,\,\varphi(y_2+1/|k|)=1,\\ 
&\varphi'(y)=|k|\varphi(y)\,\,{\rm for}\,\,y\in[y_2+1/|k|, y_1-1/|k|],\,\,\varphi'(y)=0\,\,{\rm for}\,\,y\in[y_1-1/|k|, 1]. 
\end{split}
\end{equation}
It follows from \eqref{mGk14}-\eqref{mGk15} that 
\begin{equation}\label{mGk16}
\begin{split}
&\int_{y_1-1/|k|}^{1}|k|^2\varphi^2(y)|h(y)|^2\,dy\lesssim |k|M_1^2,\quad\varphi(y)\approx e^{|k||y_1-y_2|} \,\,{\rm for}\,\,y\in[y_1-1/|k|,y_1+1/|k|]\cap I.
\end{split}
\end{equation}
The desired bounds \eqref{mGk5.6} follow from \eqref{mGk16} and equation \eqref{mGk1}. 

{\bf Step 3: the the proof of \eqref{mGk5.61}.} For the sake of concreteness, we assume that $y_1>z$ (and so $y_2\in[z,y_1]$). We shall also assume that $y_1-y_2\gg \varrho(y_2;\lambda+i\epsilon)$ and that $y_2>y_\ast+\delta+|\epsilon|^{1/2}$ as the other cases are analogous. 

For $\varphi\in C^1_p([y_2, 1])$ with $\varphi(y_2)=0$, we multiply $\varphi^2\overline{h}$ to equation \eqref{mGk1} and integrate over $[y_2,1]$ to obtain that
\begin{equation}\label{mGk17}
\int_{y_2}^1|\partial_yh(y)|^2\varphi^2(y)+2\partial_yh(y)\overline{h(y)}\varphi(y)\partial_y\varphi(y)+|k|^2\varphi^2(y)|h(y)|^2+V(y)|h(y)|^2\varphi^2(y)\,dy=0.
\end{equation}
Write for $y\in[y_2,1]$
\begin{equation}\label{mGk18}
h(y)=(y-y_{\ast})^{1/2}h^\ast(y).
\end{equation}
Simple calculations show that 
\begin{equation}\label{mGk19}
\begin{split}
&\int_{y_2}^1(y-y_{\ast})|\partial_yh^\ast(y)|^2\varphi^2(y)+2(y-y_{\ast})\partial_y\varphi(y)\varphi(y)\partial_yh^\ast(y)\overline{h^\ast(y)}+\frac{1}{4(y-y_{\ast})}|h^\ast(y)|^2\varphi^2(y)\\
&\qquad+|k|^2|h(y)|^2\varphi^2(y)+(y-y_{\ast})V(y)\varphi^2(y)|h^\ast(y)|^2\,dy=0.
\end{split}
\end{equation}
Therefore
\begin{equation}\label{mGk20}
\int_{y_2}^1\Big[\frac{1}{4(y-y_{\ast})}+(y-y_{\ast})\Re V(y)\Big]\varphi^2(y)|h^\ast(y)|^2\,dy\leq \int_{y_2}^1(y-y_{\ast})(\partial_y\varphi)^2(y)|h^\ast(y)|^2\,dy,
\end{equation}
which implies that
\begin{equation}\label{mGk21}
\int_{y_2}^1\frac{1}{y-y_{\ast}}\Big[\big((y-y_{\ast})\partial_y\varphi\big)^2(y)-\Big(1/4+(y-y_{\ast})^2\Re V(y)\Big)\varphi^2(y)\Big]|h^\ast(y)|^2\,dy\ge0.
\end{equation}
We notice the pointwise bounds for $y\in[y_2, 1]$, 
\begin{equation}\label{mGk22}
1/4+(y-y_{\ast})^2\Re V(y)\ge \max\Big\{0, 9/4-C_2\frac{\varrho^2(y_2;\lambda+i\epsilon)}{(y-y_{\ast})^2}-C_2|y-y_{\ast}|\Big\}.
\end{equation}
Now we choose $\varphi\in C^1_p([y_2,1])$ more precisely as follows. We require that 
\begin{equation}\label{mGk23}
\begin{split}
&\varphi(y_2)=0, \,\, \varphi''(y)=0\,\,{\rm for}\,\,y\in[y_2,y_2+\varrho(y_2;\lambda+i\epsilon)], \,\,\varphi(y_2+\varrho(y_2;\lambda+i\epsilon))=1,\\ 
&(y-y_{\ast})\varphi'(y)=\big[1/4+(y-y_{\ast})^2\Re V(y)\big]^{1/2}\varphi(y)\\
&{\rm for}\,\,y\in[y_2+\varrho(y_2;\lambda+i\epsilon), y_1-\varrho(y_1;\lambda+i\epsilon)],\,\,{\rm and}\,\,\varphi'(y)=0\,\,{\rm for}\,\,y\in[y_1-\varrho(y_1;\lambda+i\epsilon), 1]. 
\end{split}
\end{equation}
It follows from \eqref{mGk21}-\eqref{mGk23} that 
\begin{equation}\label{mGk24}
\begin{split}
&\int_{y_1-\varrho(y_1;\lambda+i\epsilon)}^{y_1}\frac{1}{\varrho(y_1;\lambda+i\epsilon)}\varphi^2(y)|h^\ast(y)|^2\,dy\lesssim M^2/\varrho(y_2;\lambda+i\epsilon),\\
&\varphi(y)\approx \frac{(y_1-y_{\ast})^{3/2}}{\varrho^{3/2}(y_2;\lambda+i\epsilon)} \,\,{\rm for}\,\,y\in[y_1-\varrho(y_1;\lambda+i\epsilon),y_1].
\end{split}
\end{equation}
The desired bounds \eqref{mGk5.61} follow from the change of variable \eqref{mGk18}, the bound \eqref{mGk21}, \eqref{mGk24} and equation \eqref{mGk1}. 

Lastly we indicate how to prove the identity \eqref{mGk4.0}. For any $y,z\in (0,1)$, using the notation in \eqref{mGk5.8} for $V$, we have by integration by parts
\begin{equation}\label{mGk5.771}
\begin{split}
\mathcal{G}_k(y,z;\lambda+i\epsilon)&=\int_{[0,1]}(k^2-\partial_\rho^2+V(\rho))\mathcal{G}_k(\rho,y;\lambda+i\epsilon)\,\mathcal{G}_k(\rho,z;\lambda+i\epsilon)\,d\rho\\
&=\int_{[0,1]}(k^2-\partial_\rho^2+V(\rho))\mathcal{G}_k(\rho,z;\lambda+i\epsilon)\,\mathcal{G}_k(\rho,y;\lambda+i\epsilon)\,d\rho\\
&=\mathcal{G}_k(z,y;\lambda+i\epsilon),
\end{split}
\end{equation}
which completes the proof of \eqref{mGk4.0}.
\end{proof}

As a corollary of Lemma \ref{mGk4}, we have the following additional bounds on the modified Green's function.

\begin{lemma}\label{mGk30}
Let $\mathcal{G}_{k}(y,z;\lambda+i\epsilon)$ for $y,z\in[0,1], \lambda\in \Sigma_{\delta_0}, k\in\Z\backslash\{0\}$ and $\epsilon\in[-1/8,1/8]\backslash\{0\}$ be defined as in \eqref{mGk1}. Recall the definition \eqref{mGk3} for $\delta=\delta(\lambda)>0$. 
 Define
 \begin{equation}\label{mGk30.1}
 h:=10(\delta+|\epsilon|^{1/2}),
 \end{equation}
  and also for $y,z\in [0,1]$,
 \begin{equation}\label{mGk31}
\mathcal{H}_{k}(y,z;\lambda+i\epsilon):=\Big[\partial_z+\varphi\big(\frac{y-y_{\ast}}{h}\big)\partial_y\Big]\mathcal{G}_{k}(y,z;\lambda+i\epsilon).
\end{equation}
Then the following statements hold for $z\in S_{4\delta}$.

(i) We have the bounds
\begin{equation}\label{mGk32}
\begin{split}
&\sup_{y\in[0,1],\, |y-z|\leq \min\{\varrho(z;\lambda+i\epsilon), 1/|k|\}}|\mathcal{H}_{k}(y,z;\lambda+i\epsilon)|\lesssim1,\\
&\sup_{y\in[0,1],\, |y-z|\leq \min\{\varrho(z;\lambda+i\epsilon), 1/|k|\}}|\partial_y\mathcal{H}_{k}(y,z;\lambda+i\epsilon)|\lesssim1/\min\{\varrho(z;\lambda+i\epsilon), 1/|k|\};
\end{split}
\end{equation}

(ii) For $y_1, y_2\in[0,1]$ with $y_2\in[\min\{y_1,z\}, \max\{y_1,z\}]$ and $\varrho(y_2;\lambda+i\epsilon)\gtrsim 1/|k|$, we have the bounds with $\alpha\in\{0,1\}$
\begin{equation}\label{mGk33}
\begin{split}
&\big[\min\{\varrho(y_1;\lambda+i\epsilon), 1/|k|\}\big]^\alpha|\partial^\alpha_y\mathcal{H}_{k}(y_1,z;\lambda+i\epsilon)|\\
&\lesssim  \frac{e^{-|k||y_1-y_2|}}{\min\{\varrho(z;\lambda+i\epsilon), 1/|k|\}}\bigg[|k|\int_{[y_2- 1/|k|,y_2+ 1/|k|]\cap I}|\mathcal{G}_{k}(y,z;\lambda+i\epsilon)|^2\,dy\bigg]^{1/2}.
\end{split}
\end{equation}

(iii) For $y_1, y_2\in[0,1]$ with $y_2\in[\min\{y_1,z\}, \max\{y_1,z\}]$ and $\varrho(y_2;\lambda+i\epsilon)\ll 1/|k|$, we have the bounds with $\alpha\in\{0,1\}$
\begin{equation}\label{mGk35}
\begin{split}
&\big[\min\{\varrho(y_1;\lambda+i\epsilon), 1/|k|\}\big]^\alpha|\partial^\alpha_y\mathcal{H}_{k}(y_1,z;\lambda+i\epsilon)|\\
&\lesssim \frac{1}{\min\{\varrho(z;\lambda+i\epsilon), 1/|k|\}} \min\bigg\{\frac{\varrho^2(y_1;\lambda+i\epsilon)}{\varrho^2(y_2;\lambda+i\epsilon)},\,\frac{\varrho(y_2;\lambda+i\epsilon)}{\varrho(y_1;\lambda+i\epsilon)}\bigg\} M,
\end{split}
\end{equation} 
where 
\begin{equation}\label{mGk37}
 M:=\bigg[\frac{1}{\varrho(y_2;\lambda+i\epsilon)}\int_{[y_2-\varrho(y_2;\lambda+i\epsilon),y_2+\varrho(y_2;\lambda+i\epsilon)]\cap I}|\mathcal{G}_{k}(y,z;\lambda+i\epsilon)|^2\,dy\bigg]^{1/2}.
\end{equation}

\end{lemma}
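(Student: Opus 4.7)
The plan is to reduce the estimates on $\mathcal{H}_k$ to those on $\mathcal{G}_k$ from Lemma \ref{mGk4} by deriving an inhomogeneous Helmholtz-type equation for $\mathcal{H}_k$ with a non-singular source. Write $\tilde{\varphi}(y):=\varphi((y-y_\ast)/h)$ and let $V(y)$ denote the potential coefficient in \eqref{mGk1}. Applying the first-order operator $\partial_z+\tilde{\varphi}(y)\partial_y$ to \eqref{mGk1} and computing the commutator with $k^2-\partial_y^2+V$ gives
\[
(k^2-\partial_y^2+V)\mathcal{H}_k(y,z;\lambda+i\epsilon)=F(y,z),
\]
where $F:=-\tilde{\varphi}''\partial_y\mathcal{G}_k-2\tilde{\varphi}'\partial_y^2\mathcal{G}_k-\tilde{\varphi} V'\mathcal{G}_k$, with Dirichlet boundary conditions inherited from $\mathcal{G}_k$ (using $\tilde{\varphi}(0)=\tilde{\varphi}(1)=0$ from $y_\ast>10\delta_0$ and smallness of $h$). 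The crucial cancellation of the singular source is this: $z\in S_{4\delta}$ combined with $h\geq 10\delta$ yields $|z-y_\ast|/h\leq 2/5<3/2$, hence $\tilde{\varphi}(z)=1$ and $\tilde{\varphi}'(z)=0$; then distributionally $\tilde{\varphi}(y)\partial_y\delta(y-z)=\partial_y\delta(y-z)=-\partial_z\delta(y-z)$, so the $\partial_z$- and $\tilde{\varphi}\partial_y$-contributions to the right-hand side cancel exactly.

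For part (i), in the region $|y-z|\leq\min\{\varrho(z),1/|k|\}$ (where $\varrho(y)\approx\varrho(z)$), the bound $|\mathcal{H}_k|\lesssim 1$ follows from $|\tilde{\varphi}\partial_y\mathcal{G}_k|\lesssim 1$ (Lemma \ref{mGk4}(i)) together with the analogous bound $|\partial_z\mathcal{G}_k|\lesssim 1$, which comes from Lemma \ref{mGk4}(i) via the symmetry \eqref{mGk4.0}. For $\partial_y\mathcal{H}_k$, one first verifies that $\mathcal{H}_k$ is continuous across $y=z$ (the jumps $+1$ of $\partial_z\mathcal{G}_k$ and $-1$ of $\tilde{\varphi}(z)\partial_y\mathcal{G}_k=\partial_y\mathcal{G}_k$ cancel), then uses \eqref{mGk1} to write $\partial_y^2\mathcal{G}_k=(k^2+V)\mathcal{G}_k$ pointwise on $\{y\neq z\}$, and invokes Lemma \ref{mGk4}(i) to obtain $|\partial_y\mathcal{H}_k|\lesssim 1/\min\{\varrho(z),1/|k|\}$.

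For parts (ii) and (iii), I would repeat the entanglement-inequality arguments of Steps 2 and 3 of the proof of Lemma \ref{mGk4} applied directly to $\mathcal{H}_k$ via its equation, using the same piecewise exponential test functions $\varphi_\ast$ (and the $\varrho$-weighted test functions in case (iii), after the substitution $\mathcal{H}_k(y,z)=(y-y_\ast)^{1/2}\mathcal{H}_k^\ast(y,z)$). The $F$-contribution $\Re\int F\overline{\mathcal{H}_k}\varphi_\ast^2\,dy$ is absorbed via Cauchy--Schwarz, using the pointwise bounds on $\partial_y\mathcal{G}_k,\partial_y^2\mathcal{G}_k,\mathcal{G}_k$ from Lemma \ref{mGk4} on the support $\{|y'-y_\ast|\leq 2h\}$ of $F$ (where $\varrho(y')\gtrsim h$). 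The conversion from an $L^2$-reservoir of $\mathcal{H}_k$ near $y_2$ to an $L^2$-reservoir of $\mathcal{G}_k$ near $y_2$ is carried out via the pointwise estimate $|\mathcal{H}_k(y,z)|\lesssim|\mathcal{G}_k(y,z)|/\min\{\varrho(z),1/|k|\}$ valid for $y$ away from $z$, which follows by applying Lemma \ref{mGk4} term-by-term to the defining expression $\mathcal{H}_k=\partial_z\mathcal{G}_k+\tilde{\varphi}\partial_y\mathcal{G}_k$; this produces the extra factor $1/\min\{\varrho(z),1/|k|\}$ in the target bound. The pointwise estimate on $\partial_y\mathcal{H}_k$ then follows from Sobolev embedding combined with the equation.

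The main obstacle is the book-keeping required to match the precise form of the right-hand side in (ii)--(iii): in particular, ensuring that the absorption of $F$ does not produce logarithmic losses, and preserving the exact decay ratio $\min\{\varrho^2(y_1)/\varrho^2(y_2),\varrho(y_2)/\varrho(y_1)\}$ in case (iii). This requires an exact parallel of the $\varrho$-weighted test function construction in the proof of Lemma \ref{mGk4}(iii), adapted to the perturbation introduced by $F$, and careful tracking of the dichotomy $\varrho(y_2)\gtrsim 1/|k|$ versus $\varrho(y_2)\ll 1/|k|$ when relating $\varrho(y_2)$ to $\varrho(z)$.
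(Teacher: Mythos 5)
Your proposal is correct and follows essentially the same route as the paper: you derive the same inhomogeneous equation for $\mathcal{H}_k$ (the paper's \eqref{mGk39}), observe the same cancellation of the $\delta'(y-z)$ contributions (using $\varphi^\dagger\equiv 1$ near $z\in S_{4\delta}$), note that the source is supported away from $y_\ast$ where $\varrho\gtrsim h$, and then re-run the energy/entanglement machinery of Lemma~\ref{mGk4} on $\mathcal{H}_k$ treating the source as a perturbation — which is precisely what the paper summarizes as ``Lemma~\ref{mGk4} and standard elliptic regularity theory.''
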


\begin{proof}
Denote with a slight abuse of notation for $y\in [0,1]$, 
\begin{equation}\label{mGk38}
\varphi^\dagger(y):=\varphi\big(\frac{y-y_{\ast}}{h}\big),\quad V(y):=\frac{b''(y)}{b(y)-\lambda+i\epsilon}\Big[\varphi\big(\frac{y-y_\ast}{\delta_0}\big)-\varphi\big(\frac{y-y_\ast}{\delta(\lambda)}\big)\Big].
\end{equation}
Then $\mathcal{H}_{k,j}(y,z;\lambda+i\epsilon)$ satisfies for $y\in[0,1], z\in S_{4\delta}$,
\begin{equation}\label{mGk39}
\begin{split}
&(k^2-\partial_y^2)\mathcal{H}_{k}(y,z;\lambda+i\epsilon)+V(y)\mathcal{H}_{k}(y,z;\lambda+i\epsilon)\\
&=-\partial_y^2\varphi^\dagger(y)\partial_y\mathcal{G}_{k}(y,z;\lambda+i\epsilon)-\partial_yV(y)\varphi^\dagger(y)\mathcal{G}_{k}(y,z;\lambda+i\epsilon)-2\partial_y\varphi^\dagger(y)\partial_y^2\mathcal{G}_{k}(y,z;\lambda+i\epsilon).
\end{split}
\end{equation}
The desired bounds then follow from equation \eqref{mGk39}, Lemma \ref{mGk4} and standard elliptic regularity theory. 

\end{proof}

The bounds in Lemma \ref{mGk4} and Lemma \ref{mGk30} are quite sharp, since we can exploit the decay coming from both $k^2$ and $\frac{b''(y)}{b(y)-\lambda+i\epsilon}\Big[\varphi\big(\frac{y-y_\ast}{\delta_0}\big)-\varphi\big(\frac{y-y_\ast}{\delta(\lambda)}\big)\Big]$. It is however somewhat complicated to formulate a concrete bound that is easy to use. Instead, the following simple bounds are more often used. 
\begin{corollary}\label{mGk50}
Let $\mathcal{G}_{k}(y,z;\lambda+i\epsilon)$ for $y,z\in[0,1], \lambda\in \Sigma_{\delta_0}$ and $\epsilon\in[-1/8,1/8]\backslash\{0\}$ be defined as in \eqref{mGk1}.
Then we have the following bounds. 

(i) 
For $y, z\in[0,1]$, we have the bounds with $\alpha\in\{0,1\}$
\begin{equation}\label{mGk51}
\begin{split}
&\Big[|k|+\varrho^{-1}(y;\lambda+i\epsilon)\Big]^{-\alpha}|\partial_y^\alpha\mathcal{G}_{k}(y,z;\lambda+i\epsilon)|\\
&\lesssim\frac{1}{|k|+\varrho^{-1}(z;\lambda+i\epsilon)} \min\bigg\{e^{-|k||y-z|}, \frac{\varrho^2(y;\lambda+i\epsilon)}{\varrho^2(z;\lambda+i\epsilon)},\,\frac{\varrho(z;\lambda+i\epsilon) }{\varrho(y;\lambda+i\epsilon)}\bigg\}.
\end{split}
\end{equation}

(iii) For $y\in[0,1], z\in S_{4\delta}$, we have the bounds with $\alpha\in\{0,1,2\}$
\begin{equation}\label{mGk52}
\begin{split}
&\Big[|k|+\varrho^{-1}(y;\lambda+i\epsilon)\Big]^{-\alpha}|\partial_y^\alpha\mathcal{H}_{k}(y,z;\lambda+i\epsilon)|\lesssim \min\bigg\{e^{-|k||y-z|}, \frac{\varrho^2(y;\lambda+i\epsilon)}{\varrho^2(z;\lambda+i\epsilon)},\,\frac{\varrho(z;\lambda+i\epsilon) }{\varrho(y;\lambda+i\epsilon)}\bigg\}.
\end{split}
\end{equation}

\end{corollary}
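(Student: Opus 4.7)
The plan is to combine Lemmas \ref{mGk4} and \ref{mGk30} with the symmetry \eqref{mGk4.0}, repackaging the windowed $L^2$ norms appearing in \eqref{mGk5.6}--\eqref{mGk5.7} and \eqref{mGk33}--\eqref{mGk37} into pointwise bounds depending only on $\varrho(y;\lambda+i\epsilon)$, $\varrho(z;\lambda+i\epsilon)$, $|k|$ and $|y-z|$. The key point is that, by symmetry, the $L^2$ content of $\mathcal{G}_k(\cdot,z;\lambda+i\epsilon)$ near the source $z$ is controlled by the diagonal pointwise bounds in \eqref{mGk5}, which inject exactly the factor $1/(|k|+\varrho^{-1}(z;\lambda+i\epsilon))$ that appears on the right-hand side of \eqref{mGk51}.

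\textbf{Step 1: reduction of windowed norms.} Take the intermediate point $y_2$ in Lemma \ref{mGk4} to be $z$ (or a nearby point with $\varrho(y_2;\lambda+i\epsilon)$ comparable to $\varrho(z;\lambda+i\epsilon)$). On any window of width $\lesssim \min\{\varrho(z;\lambda+i\epsilon),1/|k|\}$ around $z$, the pointwise estimate \eqref{mGk5} forces $|\mathcal{G}_k(y,z;\lambda+i\epsilon)|\lesssim \min\{\varrho(z;\lambda+i\epsilon),1/|k|\}\approx 1/(|k|+\varrho^{-1}(z;\lambda+i\epsilon))$. Consequently both the normalized $L^2$ quantity in \eqref{mGk5.6} and the quantity $M$ in \eqref{mGk5.7} are bounded by $\lesssim 1/(|k|+\varrho^{-1}(z;\lambda+i\epsilon))$.

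\textbf{Step 2: the three members of the min.} For the exponential piece $e^{-|k||y-z|}$, apply \eqref{mGk5.6} with $y_2=z$ whenever $\varrho(z;\lambda+i\epsilon)\gtrsim 1/|k|$; in the opposite regime $\varrho(z;\lambda+i\epsilon)\ll 1/|k|$ choose $y_2$ to be the first point along the segment from $z$ to $y$ at which $\varrho(y_2;\lambda+i\epsilon)\sim 1/|k|$, incurring only a bounded multiplicative loss because $e^{-|k||y_2-z|}\approx 1$ throughout the critical core. For the polynomial pieces $\varrho^2(y;\lambda+i\epsilon)/\varrho^2(z;\lambda+i\epsilon)$ and $\varrho(z;\lambda+i\epsilon)/\varrho(y;\lambda+i\epsilon)$, apply \eqref{mGk5.61} with $y_2=z$, so that the $\min$ already present in \eqref{mGk5.61} delivers both bounds simultaneously. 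Combining yields \eqref{mGk51} with $\alpha=0$; the $\alpha=1$ case absorbs the prefactor $[|k|+\varrho^{-1}(y;\lambda+i\epsilon)]$ from \eqref{mGk5.6}--\eqref{mGk5.61} into the left-hand side.

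\textbf{Step 3: the $\mathcal{H}_k$ bound.} For \eqref{mGk52} with $\alpha\in\{0,1\}$ we run the same argument using Lemma \ref{mGk30} instead, and observe that the factor $1/\min\{\varrho(z;\lambda+i\epsilon),1/|k|\}$ appearing in \eqref{mGk33} and \eqref{mGk35} precisely cancels the $\min\{\varrho(z;\lambda+i\epsilon),1/|k|\}$ produced by Step 1, explaining why \eqref{mGk52} has no $1/(|k|+\varrho^{-1}(z;\lambda+i\epsilon))$ prefactor. The case $\alpha=2$ is handled by solving for $\partial_y^2\mathcal{H}_k$ from the equation \eqref{mGk39} (valid for $y\neq z$) and inserting the already established bounds for $\mathcal{G}_k,\partial_y\mathcal{G}_k,\mathcal{H}_k,\partial_y\mathcal{H}_k$; the cutoff $\varphi^\dagger$ localizes the source terms away from $y=z$, so no new singularities appear.

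\textbf{Main obstacle.} The subtlety is the regime $\varrho(z;\lambda+i\epsilon)\ll 1/|k|$ with $y$ well outside the critical core, where one cannot directly take $y_2=z$ in the exponential bound \eqref{mGk5.6}. Here the proof must first exit the core using \eqref{mGk5.61}, and then chain with \eqref{mGk5.6} from a point $y_2$ with $\varrho(y_2;\lambda+i\epsilon)\sim 1/|k|$; the nontrivial check is that this two-stage composition still produces the clean $\min$ in \eqref{mGk51} rather than a sum of the two rates, which ultimately rests on the fact that inside the core the exponential decay $e^{-|k||y-z|}$ is essentially trivial while outside the core the polynomial ratios $\varrho^2(y)/\varrho^2(z)$ and $\varrho(z)/\varrho(y)$ are harmless.
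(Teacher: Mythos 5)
Your proposal is correct and follows essentially the same route as the paper: pass from the windowed $L^2$ quantities of Lemmas \ref{mGk4} and \ref{mGk30} to pointwise bounds via the diagonal estimate \eqref{mGk5}, and chain the exponential (\eqref{mGk5.6}, \eqref{mGk33}) and polynomial (\eqref{mGk5.61}, \eqref{mGk35}) regimes through an intermediate point $y'$ with $\varrho(y';\lambda+i\epsilon)\approx 1/|k|$, exactly as the paper's one-sentence proof indicates. You in fact supply useful detail the paper glosses over, in particular the $\alpha=2$ case of \eqref{mGk52}, which is not covered by Lemma \ref{mGk30} and is correctly handled by solving for $\partial_y^2\mathcal{H}_k$ from \eqref{mGk39} — noting that the potential $V$ vanishes on $S_{1.5\delta}$, the cutoff $\varphi^\dagger$ confines the derivative source terms to the shell $\varrho(y;\lambda+i\epsilon)\approx \delta+|\epsilon|^{1/2}\approx\varrho(z;\lambda+i\epsilon)$, and $z\in S_{4\delta}$ sits strictly inside the region where $\partial_y\varphi^\dagger\equiv 0$, so no delta contribution from $\partial_y^2\mathcal{G}_k$ survives. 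Your worry about the chaining producing a "sum" rather than a "min" is unfounded in a helpful direction: composing the two stages gives a product of the exponential and polynomial factors, which is stronger than the stated $\min$, and this is what justifies the clean form of \eqref{mGk51}--\eqref{mGk52}.
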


\begin{proof}
The desired bounds \eqref{mGk51}-\eqref{mGk52} follow directly from Lemma \ref{mGk4} and Lemma \ref{mGk30}, by choosing, if necessary, another point $y'$ between $y$ and $z$ such that $\varrho(y';\lambda+i\epsilon)\approx 1/|k|$, and applying \eqref{mGk51}-\eqref{mGk52} on intervals $[\min\{z,y'\},\max\{z,y'\}]$ and $[\min\{y',y\},\max\{y',y\}]$ successively.
\end{proof}

\section{The limiting absorption principle}
In this section we study the solvability of the main Rayleigh equations \eqref{F7}. It turns out that the situation is very different for the spectral range $\lambda\in \Sigma\backslash\Sigma_{\delta_0/2}$ (the non-degenerate case) and $\lambda\in\Sigma_{\delta_0}$ (the degenerate case). We first consider the non-degenerate case. 
\subsection{The non-degenerate case}
Fix $\epsilon\in[-1/4,1/4]\backslash\{0\}, \lambda\in \Sigma\backslash\Sigma_{\delta_0/2},  k\in\mathbb{Z}\backslash\{0\}$. Define for each $g\in L^2(0,1)$ the operator
\begin{equation}\label{T1}
T_{k,\lambda,\epsilon}g(y):=\int_{0}^1G_k(y,z)\frac{b''(z)g(z)}{b(z)-\lambda+i\epsilon}dz.
\end{equation}

For applications below, we fix a smooth cutoff function $\Phi\in C_0^\infty(y_\ast-\delta_0/3,y_\ast+\delta_0/3)$ with $\Phi\equiv 1$ on $[y_\ast-\delta_0/4,y_\ast+\delta_0/4]$.
To obtain the optimal dependence on the frequency variable $k$, we define
\begin{equation}\label{T1.1}
\|g\|_{H^1_k(I)}:=\|g\|_{L^2(I)}+|k|^{-1}\|g'\|_{L^2(I)}.
\end{equation}

\begin{lemma}\label{T2}
For  $\epsilon\in[-1/4,1/4]\backslash\{0\}, \lambda\in \Sigma\backslash\Sigma_{\delta_0/2},  k\in\mathbb{Z}\backslash\{0\}$, the operator $T_{k,\lambda,\epsilon}$ satisfies the bound
\begin{equation}\label{T3}
\|T_{k,\lambda,\epsilon}g\|_{H^1_k(I)}\lesssim |k|^{-1/3}\|g\|_{H^1_k(I)},\qquad {\rm for\,\,all}\,\,g\in H^{1}_k(I).
\end{equation}
In addition, we have the more precise regularity structure
\begin{equation}\label{T3.1}
\begin{split}
&\bigg\|\partial_yT_{k,\lambda,\epsilon}g(y)+\frac{b''(y)(1-\Phi(y))g(y)}{b'(y)}\log{(b(y)-\lambda+i\epsilon)}\bigg\|_{W^{1,1}(\R)}\\
&\lesssim \langle k\rangle^{4/3}\|g\|_{H^1_k(I)}.
\end{split}
\end{equation}

\end{lemma}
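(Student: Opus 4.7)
The approach is to exploit the fact that for $\lambda\in\Sigma\setminus\Sigma_{\delta_0/2}$, the singular factor $(b(z)-\lambda+i\epsilon)^{-1}$ is concentrated in the region $\{1-\Phi>0\}$ where $b$ is strictly monotonic, and there one can trade the singularity for a logarithm using $(b-\lambda+i\epsilon)^{-1}=(b')^{-1}\partial_z\log(b-\lambda+i\epsilon)$. I will first prove the refined structural identity \eqref{T3.1} via integration by parts, and then use it together with a trivial energy bound to derive \eqref{T3}.

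Concretely, I split $T_{k,\lambda,\epsilon}g=T^{\Phi}g+T^{1-\Phi}g$ according to the cutoff $\Phi$. Since $|b(z)-\lambda|\gtrsim 1$ on $\mathrm{supp}\,\Phi$, the kernel of $T^{\Phi}$ is smooth uniformly in $\epsilon$ and all bounds follow directly from \eqref{Gk1.1} and \eqref{Gk3.1}. On the complementary region $b-\lambda+i\epsilon$ has (at most) a single nondegenerate zero $z_c(\lambda)$ with $|b'(z_c)|\gtrsim 1$. Writing $F(z):=\frac{b''(z)(1-\Phi(z))g(z)}{b'(z)}$ and integrating by parts in $z$ (boundary terms vanish since $G_k(y,0)=G_k(y,1)=0$) gives
\begin{equation*}
T^{1-\Phi}g(y)=-\int_0^1\bigl[\partial_zG_k(y,z)F(z)+G_k(y,z)\partial_zF(z)\bigr]\log(b(z)-\lambda+i\epsilon)\,dz.
\end{equation*}
Now I apply $\partial_y$ and invoke the key identity \eqref{Gk1.2}: $\partial_y\partial_zG_k(y,z)=\delta(y-z)+F_k(y,z)$. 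The delta distribution produces precisely $-F(y)\log(b(y)-\lambda+i\epsilon)$, which is exactly the counterterm in \eqref{T3.1}. The remaining integrals, involving $F_k(y,z)F(z)$ and $\partial_yG_k(y,z)\partial_zF(z)$ against $\log(b(z)-\lambda+i\epsilon)$, are then estimated in $W^{1,1}(\R)$ using the log-weighted bounds \eqref{Gk1.1}, \eqref{Gk3.1} with $A=z_c(\lambda)$ and $m\in\{0,1,2\}$, noting that $|\log(b(z)-\lambda+i\epsilon)|\lesssim 1+|\log|z-z_c||$ uniformly in $\epsilon$. The second $y$-derivative brings in $\partial_yF_k$ (costing one power of $|k|$) and $\partial_y^2G_k=k^2G_k-\delta(y-z)$; the delta matches the derivative of the explicit counterterm on the left, the $k^2G_k$ piece is absorbed by \eqref{Gk1.1} (which has built-in $|k|^2$ smoothing), and all $|k|$-growth is packaged into $\langle k\rangle^{4/3}$.

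For \eqref{T3}, the identity \eqref{T3.1} by itself does not suffice because the counterterm $F(y)\log(b(y)-\lambda+i\epsilon)$ still carries an unbounded logarithm as $\epsilon\to 0$. My plan is to combine two estimates: a crude energy bound $\|Tg\|_{H^1_k}\lesssim|k|^{-2}\|\mathcal{F}\|_{L^2}$ with $\mathcal{F}=b''g/(b-\lambda+i\epsilon)$ and $\|\mathcal{F}\|_{L^2}\lesssim\epsilon^{-1/2}\|g\|_{H^1_k}$, useful when $\epsilon$ is not too small, together with a Schur-type estimate derived from the IBP identity (where $\log(b-\lambda+i\epsilon)$ is $L^p$-integrable for every finite $p$ uniformly in $\epsilon$), which is $\epsilon$-insensitive but incurs a $\log\langle k\rangle$ loss. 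Optimizing between the two at the critical scale $\epsilon\sim|k|^{-2/3}$ yields the claimed $|k|^{-1/3}$ gain.

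The main obstacle is the $|k|^{-1/3}$ decay in \eqref{T3}: neither the energy estimate (which blows up as $\epsilon\to 0$) nor the IBP-based estimate (which saves only $\log\langle k\rangle$) is sharp on its own, and the correct rate emerges only from balancing the critical-layer width $\epsilon$ against the Green's function scale $|k|^{-1}$. The structural identity \eqref{T3.1} is by contrast mainly a bookkeeping exercise in tracking log-weighted Green's function bounds, with the $\langle k\rangle^{4/3}$ factor leaving generous slack to absorb the $|k|$-losses from two $y$-derivatives falling on $G_k$ and $F_k$.
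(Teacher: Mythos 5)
Your proof of the structural bound \eqref{T3.1} is essentially the paper's argument: split by the cutoff $\Phi$, integrate by parts to convert $(b-\lambda+i\epsilon)^{-1}$ into $\partial_z\log(b-\lambda+i\epsilon)$, and use \eqref{Gk1.2} so that the $\delta(y-z)$ contribution from $\partial_y\partial_z G_k$ produces the counterterm $-F(y)\log(b(y)-\lambda+i\epsilon)$. The remaining pieces are handled by the log-weighted Green's function bounds \eqref{Gk1.1}, \eqref{Gk3.1}. That part of your proposal is sound.

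Your treatment of \eqref{T3}, however, has a genuine gap. You cannot ``optimize'' over $\epsilon$: the parameter $\epsilon$ is fixed by the resolvent and the bound must hold \emph{uniformly} for every small nonzero $\epsilon$, so an energy estimate with an $\epsilon^{-1/2}$ loss is useless for small $\epsilon$, and any scheme that requires $\epsilon\sim|k|^{-2/3}$ is not an admissible proof. More importantly, you have underestimated what the integrated-by-parts representation already gives. From the formula
\begin{equation*}
T^2g(y)=-\int_0^1\partial_z G_k(y,z)F(z)\log(b(z)-\lambda+i\epsilon)\,dz-\int_0^1G_k(y,z)\,\partial_z F(z)\,\log(b(z)-\lambda+i\epsilon)\,dz,
\end{equation*}
with $F=b''(1-\Phi)g/b'$, one uses the 1D embedding $\|g\|_{L^\infty(I)}\lesssim|k|^{1/2}\|g\|_{H^1_k(I)}$ together with the $k$-weighted decay built into \eqref{Gk1.1} (which says $\|\partial_z G_k(y,\cdot)\log\|_{L^1_z}\lesssim|k|^{-1}\log\langle k\rangle$ and $\|G_k(y,\cdot)\|_{L^2_z}\lesssim|k|^{-3/2}$, etc.), and one finds directly
\begin{equation*}
\|T^2g\|_{L^2}\lesssim |k|^{-1/2}\log\langle k\rangle\,\|g\|_{H^1_k},\qquad |k|^{-1}\|\partial_y T^2g\|_{L^2}\lesssim |k|^{-1/2}\log\langle k\rangle\,\|g\|_{H^1_k},
\end{equation*}
with similar (indeed much stronger, $O(|k|^{-2})$) bounds for $T^1$. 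Since $|k|^{-1/2}\log\langle k\rangle\lesssim|k|^{-1/3}$, this already yields \eqref{T3} uniformly in $\epsilon$ --- no regularization-scale balancing is needed, and the ``$\log\langle k\rangle$ loss with no $k$-decay'' you attribute to the Schur estimate is simply a miscount of the powers of $k$ in the Green's function bounds. Relatedly, the counterterm $F\log(b-\lambda+i\epsilon)$ does \emph{not} obstruct the $H^1_k$ bound: $\log(b-\lambda+i\epsilon)$ is bounded in $L^2_y$ uniformly in $\epsilon$, so $\|F\log\|_{L^2}\lesssim|k|^{1/2}\|g\|_{H^1_k}$; the counterterm only fails to be uniform in $\epsilon$ for the stronger $W^{1,1}$ norm, which is precisely why it is subtracted in \eqref{T3.1} but not in \eqref{T3}.
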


\begin{proof}
We can decompose for $y\in [0,1]$,
\begin{equation}\label{T3.2}
T_{k,\lambda,\epsilon}g(y):=T^1_{k,\lambda,\epsilon}g(y)+T^2_{k,\lambda,\epsilon}g(y),
\end{equation}
where
\begin{equation}\label{T3.3}
T^1_{k,\lambda,\epsilon}g(y):=\int_{0}^1G_k(y,z)\frac{\Phi(z)b''(z)g(z)}{b(z)-\lambda-i\epsilon}dz,\quad T^2_{k,\lambda,\epsilon}g(y):=\int_{0}^1G_k(y,z)\frac{(1-\Phi(z))b''(z)g(z)}{b(z)-\lambda+i\epsilon}dz.
\end{equation}
It follows from the definition of $\Phi$ that $T^1_{k,\lambda,\epsilon}g(y)$ satisfies the bound
\begin{equation}\label{T3.4}
\|T^1_{k,\lambda,\epsilon}g(y)\|_{H^1_k(I)}\lesssim |k|^{-1/3}\|g\|_{H^1_k(I)},\quad \|\partial_yT^1_{k,\lambda,\epsilon}g(y)\|_{W^{1,1}(I)}\lesssim  \langle k\rangle^{4/3}\|g\|_{H^1_k(I)}.
\end{equation}
To bound $T^2_{k,\lambda,\epsilon}g(y)$, we follow the approach in \cite{JiaL}. Using integration by parts, we obtain that
\begin{equation}\label{T3.5}
\begin{split}
T^2_{k,\lambda,\epsilon}g(y)&=\int_0^1G_k(y,z)\frac{(1-\Phi(z))b''(z)g(z)}{b'(z)}\partial_z\log(b(z)-\lambda+i\epsilon)\,dz\\
&=-\int_0^1\partial_zG_k(y,z)\frac{(1-\Phi(z))b''(z)g(z)}{b'(z)}\log(b(z)-\lambda+i\epsilon)\,dz\\
&\quad-\int_0^1G_k(y,z)\partial_z\Big[\frac{(1-\Phi(z))b''(z)g(z)}{b'(z)}\Big]\log(b(z)-\lambda+i\epsilon)\,dz.
\end{split}
\end{equation}
The desired bounds follow from the bound \eqref{Gk1.1}, the formula \eqref{Gk1.2} and \eqref{Gk3.1}. 
\end{proof}

We now prove the limiting absorption principle, using the assumption that there is no discrete or generalized embedded eigenvalues.

\begin{lemma}\label{T5}
There exist $\epsilon_0, \kappa>0$, such that the following statement holds. For all $\lambda\in \Sigma\backslash\Sigma_{\delta_0/2}, k\in\mathbb{Z}\backslash\{0\}, 0<|\epsilon|<\epsilon_0$ and any $g\in H^1_k(I)$, we have the bound
\begin{equation}\label{T6}
\|g+T_{k,\lambda,\epsilon}g\|_{H^1_k(I)}\ge\kappa \|g\|_{H^1_k(I)}.
\end{equation}
\end{lemma}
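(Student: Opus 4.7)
The plan is to argue by contradiction, which is the standard route for a limiting absorption principle of this type. If the inequality fails, one extracts sequences $k_n \in \mathbb{Z}\setminus\{0\}$, $\lambda_n \in \Sigma\setminus\Sigma_{\delta_0/2}$, $\epsilon_n$ with $0 < |\epsilon_n| \to 0$, and $g_n \in H^1_{k_n}(I)$ satisfying $\|g_n\|_{H^1_{k_n}(I)} = 1$ but $\|g_n + T_{k_n,\lambda_n,\epsilon_n}g_n\|_{H^1_{k_n}(I)} \to 0$. The first reduction uses Lemma~\ref{T2}: since $\|T_{k_n,\lambda_n,\epsilon_n} g_n\|_{H^1_{k_n}(I)} \lesssim |k_n|^{-1/3}$, the triangle inequality combined with $\|g_n\|_{H^1_{k_n}} = 1$ forces $|k_n|$ to stay bounded, so along a subsequence we may fix $k_n = k$. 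After further extraction, $\lambda_n \to \lambda_* \in \Sigma$ (which is compact), with $\lambda_* \neq b(y_*)$ because $\lambda_n$ stays outside $\Sigma_{\delta_0/2}$, and the sign of $\epsilon_n$ stabilizes to some $\iota \in \{+,-\}$. Every root $z_j \in [0,1]$ of $b(z) = \lambda_*$ is simple, with $|b'(z_j)| \geq c_0 > 0$ uniformly, since we are off the single degenerate point $y_*$.

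Next I would extract a limit $g_*$ and pass to the limit in $g_n + T_{k,\lambda_n,\epsilon_n} g_n = r_n$ with $r_n \to 0$ in $H^1_k$. Since $k$ is fixed and $\{g_n\}$ is bounded in $H^1(I)$, Rellich compactness gives, along a subsequence, $g_n \to g_*$ strongly in $L^2(I)$ and uniformly (by $C^{1/2}$ bounds), with $g_* \in H^1(I)$ from weak compactness. The key tool to pass to the limit is the refined decomposition \eqref{T3.1}: modulo a sequence bounded in $W^{1,1}(I)$, the derivative $\partial_y T_{k,\lambda_n,\epsilon_n} g_n$ carries an explicit singular term $\tfrac{b''(y)(1-\Phi(y))}{b'(y)} g_n(y) \log(b(y) - \lambda_n + i\epsilon_n)$. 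Using the Sokhotski--Plemelj formula $\lim_{\epsilon \to 0^\iota}\log(x+i\epsilon) = \log|x| + i\pi\iota\,\mathbf{1}_{\{x<0\}}$ together with $g_n \to g_*$ uniformly, passing the additional $y$-derivative generates, at each simple root $z_j$ of $b(\cdot) = \lambda_*$, a Dirac contribution $-i\pi\iota\,\tfrac{b''(z_j) g_*(z_j)}{|b'(z_j)|}\delta(y-z_j)$. Setting $\psi_* := -\lim_n T_{k,\lambda_n,\epsilon_n} g_n$ and using $g_n + T g_n \to 0$ gives $g_* = \psi_* \in H^1_0(0,1)$ (inheriting the Dirichlet boundary condition from the Green-function representation of $T$), with $\psi_*$ solving, in the sense of distributions,
\[
(k^2 - \partial_y^2)\psi_*(y) + \mathrm{P.V.}\frac{b''(y)\psi_*(y)}{b(y)-\lambda_*} - i\pi\iota \sum_{b(z_j)=\lambda_*} \frac{b''(z_j)\psi_*(z_j)}{|b'(z_j)|}\delta(y-z_j) = 0.
\]
Up to complex conjugation, which swaps the two signs of $\iota$ while preserving $\lambda_* \in \mathbb{R}$, this is exactly the equation \eqref{emb2} defining a generalized embedded eigenvalue at $\lambda_* \neq b(y_*)$.

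The main obstacle is to verify $\psi_* \not\equiv 0$, so that it actually produces a nontrivial generalized embedded eigenfunction and contradicts Assumption~\ref{Massum}(2). If instead $g_* \equiv 0$, the uniform convergence $g_n \to 0$ together with the explicit structure in \eqref{T3.1} (the log factor lies in every $L^p(I)$ with $p<\infty$ and is multiplied by $g_n \to 0$ uniformly) would give $\partial_y T_{k,\lambda_n,\epsilon_n} g_n \to 0$ in $L^2$, and then $g_n = r_n - T g_n$ together with the Green-function representation would give $\|g_n\|_{H^1_k} \to 0$, contradicting the normalization. This argument excludes $g_* \equiv 0$, so $\psi_* \ne 0$, and the absence of generalized embedded eigenvalues in Assumption~\ref{Massum}(2) yields the sought contradiction. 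The most delicate point of the argument, and what I expect to be the hardest step, is the joint passage to the limit in $\epsilon_n \to 0$ and $n \to \infty$ through the log-singular structure of \eqref{T3.1}, together with the careful tracking of how the resulting Dirac contributions at the roots of $b(z) - \lambda_*$ propagate through the convolution with $G_k$ to recover \eqref{emb2} cleanly in $H^1_k$.
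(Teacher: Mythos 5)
Your proposal follows essentially the same route as the paper: argue by contradiction, use the $|k|^{-1/3}$ bound from Lemma~\ref{T2} to confine $k$ to a finite set, use the refined regularity structure \eqref{T3.1} plus compact embedding to pass to a limit, and identify the limiting equation with the defining equation \eqref{emb2} of a generalized embedded eigenvalue to contradict Assumption~\ref{Massum}. The only packaging difference is that the paper uses the compactness of $T$ furnished by \eqref{T3.1} to conclude directly that $g_j\to g$ \emph{strongly} in $H^1$ with $\|g\|_{H^1_{k_*}}=1$ (making nontriviality automatic), whereas you extract weak/$L^2$ limits first and then run a separate sub-contradiction to rule out $g_*\equiv 0$ --- a slightly longer way of invoking the same compactness; incidentally, your explicit tracking of $\iota=\pm$ and the conjugation remark is actually more careful than the paper, which writes \eqref{F6.4} with a fixed $+i\pi$ without discussing the sign of $\epsilon_j$.
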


\begin{proof}
We prove \eqref{T6} by contradiction. Assume that there exist for $j\ge1$, a sequence of numbers $k_j\in\mathbb{Z}\backslash\{0\}$, $\lambda_j\in \Sigma\backslash\Sigma_{\delta_0/2}$, $\epsilon_j\in\mathbb{\R}\backslash\{0\}\to 0$ and functions  $g_j\in H_{k_j}^1(I)$ with $\|g_j\|_{H_{k_j}^1(I)}=1$, satisfying 
$k_j\to k_{\ast}\in(\mathbb{Z}\backslash\{0\})\cup \{\pm\infty\}$, $\lambda_j\to \lambda_{\ast}\in \overline{\Sigma\backslash\Sigma_{\delta_0}}$ as $j\to\infty$, such that
\begin{equation}\label{T6.1}
\left\|g_j+T_{k_j,\lambda_j,\epsilon_j}g_j\right\|_{H_{k_j}^1(I)}\to 0,\qquad{\rm as}\,\,j\to\infty.
\end{equation}
The bounds \eqref{T3} and \eqref{T6.1} imply that $|k_j|\lesssim 1$. Thus $k_{\ast}\in \mathbb{Z}\backslash\{0\}$. Using $\|g_j\|_{H_{k_j}^1(I)}=1$, the bounds \eqref{T3.1} and the compact embedding $W^{1,1}(I)\to L^2(I)$, we conclude that by passing to a subsequence, $T_{k_j,\lambda_j,\epsilon_j}g_j$ converges in $H^1(I)$. In view of \eqref{T6.1} we can assume that $g_j\to g$ in $H^1(I)$, where $\|g\|_{H^1_{k_{\ast}}}=1$.

Using formula \eqref{T1}, we obtain from \eqref{T6.1} that for $y\in I$,
\begin{equation}\label{T6.2}
g(y)+\lim_{j\to\infty}\int_{0}^1G_{k_{\ast}}(y,z)\frac{b''(z)g(z)}{b(z)-\lambda+i\epsilon_j}\,dz=0.
\end{equation}
Applying $k_{\ast}^2-\frac{d^2}{dy^2}$ to \eqref{T6.2}, we get that for $y\in I$,
\begin{equation}\label{F6.4}
k_{\ast}^2g(y)-g''(y)+\lim_{j\to\infty}\frac{(b(y)-\lambda_\ast)b''(y)g(y)}{(b(y)-\lambda_\ast)^2+\epsilon_j^2}+i\pi \sum_{z\in[0,1], b(z)=\lambda}\frac{b''(z)g(z)}{|b'(z)|}\delta(y-z)=0,
\end{equation}
in the sense of distributions for $y\in(0,1)$, which contradicts our spectral assumption that $\lambda_\ast$ is not a generalized embedded eigenvalue for $L_k$. The lemma is then proved.
\end{proof}


\subsection{The degenerate case $\lambda\in \Sigma_{\delta_0}$ }

Recall the definition \eqref{mGk3} for $\delta=\delta(\lambda)$. For $\lambda\in \Sigma_{\delta_0}, k\in\Z\backslash\{0\}, y\in I$ and $\epsilon\in[-1/8,1/8]\backslash\{0\}$, we denote
\begin{equation}\label{DeC1.92}
d_k(\lambda,\epsilon):=\big[|\lambda-b(y_\ast)|^{1/2}+|\epsilon|^{1/2}\big]\wedge \frac{1}{|k|},\quad \varrho_k(y;\lambda+i\epsilon):=\varrho(y;\lambda+i\epsilon)\wedge \frac{1}{|k|}.
\end{equation}  
Define the weight and the associated weighted Sobolev spaces $X_{N,\varrho_k}$ and $X_{L,\varrho_k}$ as
\begin{equation}\label{DeC2}
\begin{split}
\|g\|_{X_{N,\varrho_k}(I)}:=&\sum_{\alpha\in\{0,1\}}(\delta+|\epsilon|^{1/2})^{-1/2}\Big\|\big[d_k(\lambda,\epsilon)\big]^{(-7/4+\alpha)}\partial_y^\alpha g\Big\|_{L^2(S_{3(\delta+|\epsilon|^{1/2})})}\\
&+\sum_{\alpha\in\{0,1\}}\|\varrho_k^{-7/4+\alpha}(\cdot;\lambda+i\epsilon)\partial^\alpha_yg\|_{L^\infty(I\backslash S_{3(\delta+|\epsilon|^{1/2})})},
\end{split}
\end{equation}
and
\begin{equation}\label{DeC2.0}
\begin{split}
\|g\|_{X_{L,\varrho_k}(I)}:=&\sum_{\alpha\in\{0,1\}}(\delta+|\epsilon|^{1/2})^{-1/2}\big\|d_k^{\alpha}(\lambda,\epsilon)\partial_y^\alpha g\big\|_{L^2(S_{3(\delta+|\epsilon|^{1/2})})}\\
&+\sum_{\alpha\in\{0,1\}}\big\|d_k(\lambda,\epsilon)^{-1}\varrho_k^{\alpha+1}(\cdot;\lambda+i\epsilon)\partial_y^\alpha g\big\|_{L^\infty(I\backslash S_{3(\delta+|\epsilon|^{1/2})})},
\end{split}
\end{equation}

We remark that the choice of the weights in \eqref{DeC2}-\eqref{DeC2.0} is closely related to the behavior of the modified Green's function $\mathcal{G}_k(y,z;\lambda+i\epsilon)$. In \eqref{DeC2}, we consider the case that the ``source" $z$ is of unit distance away from the critical point $y_\ast$, where the expected decay of $\mathcal{G}_k(y,z;\lambda+i\epsilon)$ towards the $y=y_\ast$ is given roughly by $\varrho_k^2(y;\lambda+i\epsilon)$; similar considerations apply in \eqref{DeC2.0} if one considers the case that the source $z$ is near $y_\ast$ and study the behavior of $\mathcal{G}_k(y,z;\lambda+i\epsilon)$ away from $y_\ast$. The choice of exponent as $7/4$ is somewhat arbitrary, as long as it is less than $2$. The endpoint case of exponent being $2$ though results in a subtle logarithmic divergence that seems more technical to handle. 

Fix $\epsilon\in[-1/4,1/4]\backslash\{0\}, \lambda\in \Sigma_{\delta_0},  k\in\mathbb{Z}\backslash\{0\}$. Recall the definition \eqref{mGk3} for $\delta=\delta(\lambda)>0$. Define for each $g\in L^2(0,1)$ the operator
\begin{equation}\label{DeC3}
T^\ast_{k}(\lambda+i\epsilon)g(y):=\int_{0}^1\mathcal{G}_{k}(y,z;\lambda+i\epsilon)\bigg[1-\varphi\big(\frac{y-y_{\ast}}{\delta_0}\big)+\varphi\big(\frac{y-y_{\ast}}{\delta}\big)\bigg]\frac{b''(z)g(z)}{b(z)-\lambda+i\epsilon}dz.
\end{equation}

Then we have the following bounds for $T^\ast_{k}(\lambda+i\epsilon)$.

\begin{lemma}\label{DeC4}
 For  $\epsilon\in[-1/4,1/4]\backslash\{0\}, \lambda\in \Sigma_{\delta_0},  k\in\mathbb{Z}\backslash\{0\},$ the operator $T^\ast_{k}(\lambda+i\epsilon)$ satisfies the bound for $X\in\{X_{N,\varrho_k}(I),X_{L,\varrho_k}(I)\}$
\begin{equation}\label{DeC5}
\|T^\ast_{k}(\lambda+i\epsilon)g\|_{X}\lesssim (1+|k|(|\lambda-b(y_\ast)|^{1/2}+|\epsilon|^{1/2}))^{-1/4}\|g\|_{X},\quad {\rm for\,\,all}\,\,g\in H^{1}_k(I).
\end{equation}
\end{lemma}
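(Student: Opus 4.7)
The plan is to exploit the pointwise bounds on $\mathcal{G}_k(y,z;\lambda+i\epsilon)$ from Corollary~\ref{mGk50}, splitting the $z$-integration according to the support of the cutoff, and carefully matching the decay branches of $\mathcal{G}_k$ against the weights in the target spaces. The key algebraic observation is that the gain exponent $1/4$ should appear as the slack $2-7/4$ between the optimal $\varrho^2$-decay of the modified Green's function and the chosen weight power $7/4$ in the $X$-norms.

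I would write $T^\ast_k g = I_{\rm far} + I_{\rm near}$ according to whether $z$ lies in $\{|z-y_\ast|\ge\delta_0\}$ or $\{|z-y_\ast|\le 2\delta\}$. On the support of $I_{\rm far}$ the multiplier $b''(z)/(b(z)-\lambda+i\epsilon)$ is uniformly bounded and $\varrho(z)\approx 1$; the bound \eqref{mGk51} then yields $|\mathcal{G}_k(y,z;\lambda+i\epsilon)|\lesssim (|k|+1)^{-1}\min\{e^{-|k||y-z|},\varrho^2(y),\varrho^{-1}(y)\}$, producing a gain of order $(|k|+1)^{-1}$ in both target spaces—strictly better than required. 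The matching with the exterior $L^\infty$ weight uses the $\varrho^2(y)$ branch for $X_{N,\varrho_k}$ and the $\varrho^{-1}(y)$ branch for $X_{L,\varrho_k}$, and the interior $L^2$-piece is handled by integrating the same pointwise bound against an interval of length $\lesssim\delta+|\epsilon|^{1/2}$.

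The decisive piece is $I_{\rm near}$. Here the multiplier has size $(\delta+|\epsilon|^{1/2})^{-2}$, which must be absorbed by the strong vanishing of $\mathcal{G}_k$ on the support. I would estimate $\|g\|_{L^2}$ on the support using the interior part of the $X_{N,\varrho_k}$-norm (giving $\|g\|_{L^2(\mathrm{supp})}\lesssim (\delta+|\epsilon|^{1/2})^{1/2}d_k^{7/4}\|g\|_{X_{N,\varrho_k}}$), and pair it via Cauchy--Schwarz with $\|\mathcal{G}_k(y,\cdot)\|_{L^2(\mathrm{supp})}$. Choosing the appropriate branch of the minimum in \eqref{mGk51} according to whether $y$ lies inside or outside $S_{3(\delta+|\epsilon|^{1/2})}$ and whether $|k|(\delta+|\epsilon|^{1/2})\gtrsim 1$ or $\ll 1$, the resulting product matches the target weight with precisely the factor $(1+|k|(\delta+|\epsilon|^{1/2}))^{-1/4}$, the exponent $1/4=2-7/4$ arising from the mismatch between the sharp $\varrho^2$-decay of $\mathcal{G}_k$ and the $\varrho^{7/4}$ weight. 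Derivative estimates follow by using the $\partial_y\mathcal{G}_k$ bound from \eqref{mGk51}: the additional factor $|k|+\varrho^{-1}(y)$ it introduces is exactly absorbed by the heavier derivative weights in both $X$-spaces. The $X_{L,\varrho_k}$ estimate proceeds analogously; since the source in $I_{\rm near}$ is near $y_\ast$ and the $X_{L,\varrho_k}$ weight tolerates growth away from $y_\ast$, one uses the $\varrho(z)/\varrho(y)$ branch of \eqref{mGk51} for $y\notin S$, together with the symmetry $\mathcal{G}_k(y,z)=\mathcal{G}_k(z,y)$ from \eqref{mGk4.0} when convenient.

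The principal technical difficulty is the bookkeeping in the $I_{\rm near}$ estimate: one must choose among the three branches of the minimum in \eqref{mGk51} according to the location of $y$ and the regime of $|k|(\delta+|\epsilon|^{1/2})$, and then verify in each case that the combination reproduces the target weighted norm with precisely the claimed exponent $1/4$. A secondary subtlety is that the Sobolev embedding used to extract pointwise information from the interior $L^2$-weighted part of the norm must be performed at the correct dyadic scale $d_k(\lambda,\epsilon)$. The logarithmic divergence that would appear were one to replace the weight exponent $7/4$ by the optimal value $2$ is precisely what forces the sub-optimal depletion rate $7/4$ in Theorem~\ref{thm}.
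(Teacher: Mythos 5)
There is a genuine gap in the treatment of the near piece $I_{\rm near}$. Your decomposition into far and near parts matches the paper's $T^\ast g = T^\ast_1 g + T^\ast_2 g$, and your handling of the far part is fine. But your claim that the multiplier $b''(z)/(b(z)-\lambda+i\epsilon)$ ``has size $(\delta+|\epsilon|^{1/2})^{-2}$'' on the support $\{|z-y_\ast|\lesssim \delta\}$ is false when $\lambda > b(y_\ast)$: since $b$ attains its minimum at $y_\ast$ with $b''(y_\ast)>0$, the equation $b(z)=\lambda$ has two roots $z_\pm$ at distance $|z_\pm - y_\ast|\approx\delta$ from $y_\ast$, both lying inside the support of the cutoff $\varphi((z-y_\ast)/\delta)$. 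At those resonance points the multiplier has magnitude $|\epsilon|^{-1}$, and $\|1/(b(z)-\lambda+i\epsilon)\|_{L^2(\{|z-z_\pm|\lesssim\delta\})}\gtrsim \delta^{-1}|\epsilon|^{-1/2}$ diverges as $\epsilon\to 0$. The modified Green's function $\mathcal{G}_k(y,z;\lambda+i\epsilon)$ does not vanish at $z=z_\pm$ (its designed vanishing is at $z=y_\ast$, not at the resonance points), so pairing $\|g\|_{L^2}$ with $\|\mathcal{G}_k(y,\cdot)\|_{L^2}$ via Cauchy--Schwarz, as you propose, does not absorb this singularity and yields bounds that blow up as $\epsilon\to 0$.

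The paper handles this by a further decomposition $T^\ast_2 = T^\ast_{2,R} + T^\ast_{2,S}$: the inner piece $T^\ast_{2,R}$ is supported in $\{|z-y_\ast|<\delta/C_3\}$ where $|b(z)-\lambda|\approx|\lambda - b(y_\ast)|$ is bounded below (no resonance, so pointwise Green's-function bounds suffice, much as you describe), while the annular piece $T^\ast_{2,S}$ supported in $\{\delta/C_3\lesssim |z-y_\ast|\lesssim 2\delta\}$ contains the resonance. On that annulus $|b'(z)|\approx\delta$ is bounded below, so one can write
\begin{equation*}
\frac{b''(z)}{b(z)-\lambda+i\epsilon}=\frac{b''(z)}{b'(z)}\,\partial_z\log\frac{b(z)-\lambda+i\epsilon}{\delta^2}
\end{equation*}
and integrate by parts (this is \eqref{DeC6.2}--\eqref{DeC6.3} in the paper). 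This converts the non-integrable $(b(z)-\lambda+i\epsilon)^{-1}$ into the uniformly-integrable logarithm, at the cost of a derivative landing on $\mathcal{G}_k$ or on the cutoff and $g$; these are then estimated via Corollary~\ref{mGk50}. This log-derivative integration by parts is the essential technical mechanism of the lemma and is what is missing from your proposal. (A secondary point: your estimate also needs the auxiliary reduction to $|\epsilon|^{1/2}\lesssim\delta$, handled separately in the paper via \eqref{DeC6.33}; without that reduction even the statement ``multiplier has size $(\delta+|\epsilon|^{1/2})^{-2}$'' fails for $\lambda<b(y_\ast)$ too.)
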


\begin{proof}
We provide the detailed proof only for the case $X=X_{N,\varrho_k}(I)$ as the other case is analogous. Since $k,\lambda, \epsilon$ are fixed, for simplicity of notations, we suppress the dependence on $k,\lambda, \epsilon$ to write $T^\ast$ as $T^\ast_{k}(\lambda+i\epsilon)$, and decompose for $y\in I$, 
\begin{equation}\label{DeC5.1}
T^\ast g(y):=T_1^\ast g(y)+T^\ast_2g(y),
\end{equation}
where 
\begin{equation}\label{DeC5.2}
\begin{split}
&T^\ast_1g(y):=\int_{0}^1\mathcal{G}_{k}(y,z;\lambda+i\epsilon)\bigg[1-\varphi\big(\frac{z-y_{\ast}}{\delta_0}\big)\bigg]\frac{b''(z)g(z)}{b(z)-\lambda+i\epsilon}dz,\\
&T^\ast_2g(y):=\int_{0}^1\mathcal{G}_{k}(y,z;\lambda+i\epsilon)\varphi\big(\frac{z-y_{\ast}}{\delta}\big)\frac{b''(z)g(z)}{b(z)-\lambda+i\epsilon}dz.
\end{split}
\end{equation}
It follows from the bounds on modified Green's function $\mathcal{G}_{k}(y,z;\lambda+i\epsilon)$, see Lemma \ref{mGk4}, that 
\begin{equation}\label{DeC5.3}
\big\|T^\ast_1g\big\|_{X_{N,\varrho_k}(I)}\lesssim |k|^{-1/2}\big\|g\big\|_{X_{N,\varrho_k}(I)}.
\end{equation}
To prove \eqref{DeC5}, it suffices to prove 
\begin{equation}\label{DeC5.4}
\|T^\ast_2g\|_{X_{N,\varrho_k}(I)}\lesssim \big(1+|k|(\delta+|\epsilon|^{1/2})\big)^{-1/4}\|g\|_{X_{N,\varrho_k}(I)}.
\end{equation}

We assume momentarily that $|\epsilon|\lesssim |\lambda-b(y_\ast)|$ and explain how to remove this assumption at the end of the proof.  
We decompose further for $y\in I$,
\begin{equation}\label{DeC5.5}
\begin{split}
T_2^\ast g(y)&=\int_{0}^1\mathcal{G}_{k}(y,z;\lambda+i\epsilon)\varphi\big(\frac{z-y_{\ast}}{\delta'}\big)\varphi\big(\frac{z-y_{\ast}}{\delta}\big)\frac{b''(z)g(z)}{b(z)-\lambda+i\epsilon}dz\\
&+\int_{0}^1\mathcal{G}_{k}(y,z;\lambda+i\epsilon)\Big[1-\varphi\big(\frac{z-y_{\ast}}{\delta'}\big)\Big]\varphi\big(\frac{z-y_{\ast}}{\delta}\big)\frac{b''(z)g(z)}{b(z)-\lambda+i\epsilon}dz\\
&:=T^\ast_{2,R}g(y)+T^\ast_{2,S}g(y),
\end{split}
\end{equation}
where we have chosen $\delta'=\delta/C_3$ with a large constant $C_3$ so that $|b(y)-\lambda|\approx |\lambda-b(y_{\ast})|$ for $|y-y_{\ast}|<\delta'$. 

It suffices to prove for $\diamond\in\{R,S\}$
\begin{equation}\label{DeC5.50}
\|T^\ast_{2,\diamond}g\|_{X_{N,\varrho_k}(I)}\lesssim \big(1+|k|(|\lambda-b(y_\ast)|^{1/2}+|\epsilon|^{1/2})\big)^{-1/4}\|g\|_{X_{N,\varrho_k}(I)}. 
\end{equation}

{\bf Step 1.} We first prove \eqref{DeC5.50} with $\diamond=R$. 

{\it Case I: $1/|k|>|\lambda-b(y_{\ast})|^{1/2}+|\epsilon|^{1/2}$}.  In this case for $|z-y_{\ast}|\lesssim\delta$ and $|y-y_{\ast}|\lesssim1$ we have the bound 
\begin{equation}\label{DeC5.6}
\big|\mathcal{G}_{k}(y,z;\lambda+i\epsilon)\big|\lesssim\frac{\delta^2+|\epsilon|}{|y-y_{\ast}|+\delta+|\epsilon|^{1/2}},\quad \big|\partial_y\mathcal{G}_{k}(y,z;\lambda+i\epsilon)\big|\lesssim\frac{\delta^2+|\epsilon|}{(|y-y_{\ast}|+\delta+|\epsilon|^{1/2})^2}. 
\end{equation}
It follows from the bound \eqref{DeC5.6} that 
\begin{equation}\label{DeC5.7}
\|T^\ast_{2,R}g\|_{X_{N,\varrho_k}(I)}\lesssim \big(1+|k|(|\lambda-b(y_\ast)|^{1/2}+|\epsilon|^{1/2})\big)^{-1/4}\|g\|_{X_{N,\varrho_k}(I)}
\end{equation}

{\it Case II: $1/|k|\ll |\lambda-b(y_{\ast})|^{1/2}+|\epsilon|^{1/2}$.} In this case, we have for $|z-y_{\ast}|\lesssim\delta$ and $|y-y_{\ast}|\lesssim1$  that
\begin{equation}\label{DeC5.8}
\big|\mathcal{G}_{k}(y,z;\lambda+i\epsilon)\big|+|k|^{-1}\big|\partial_y\mathcal{G}_{k}(y,z;\lambda+i\epsilon)\big|\lesssim |k|^{-1}e^{-|k||y-z|}. 
\end{equation}
The desired bound 
\begin{equation}\label{DeC5.9}
\|T^\ast_{2,R}g\|_{X_{N,\varrho_k}(I)}\lesssim \big(1+|k|(|\lambda-b(y_\ast)|^{1/2}+|\epsilon|^{1/2})\big)^{-1/4}\|g\|_{X_{N,\varrho_k}(I)}
\end{equation}
follows from \eqref{DeC5.8}. 

{\bf Step 2.} We now turn to the proof of \eqref{DeC5.50} with $\diamond=S$ and still consider two cases. 

{\it Case I: $1/|k|>|\lambda-b(y_{\ast})|^{1/2}+|\epsilon|^{1/2}$}.
Denoting for $y\in I$, 
\begin{equation}\label{DeC5.90}
\varphi^\ast\big(\frac{y-y_{\ast}}{\delta}\big):=\Big[1-\varphi\big(\frac{z-y_{\ast}}{\delta'}\big)\Big]\varphi\big(\frac{z-y_{\ast}}{\delta}\big),
\end{equation}
we can rewrite 
\begin{equation}\label{DeC6.2}
\begin{split}
T^\ast_{2,S}g(y)&=\int_{0}^1\mathcal{G}_{k}(y,z;\lambda+i\epsilon)\varphi^\ast\big(\frac{z-y_{\ast}}{\delta}\big)\frac{b''(z)g(z)}{b'(z)}\partial_z\log{\frac{b(z)-\lambda+i\epsilon}{\delta^2}}\\
&=-\int_0^1\partial_z\bigg[\mathcal{G}_{k}(y,z;\lambda+i\epsilon)\varphi^\ast\big(\frac{z-y_{\ast}}{\delta}\big)\frac{b''(z)g(z)}{b'(z)}\bigg]\log{\frac{b(z)-\lambda+i\epsilon}{\delta^2}}dz.
\end{split}
\end{equation}
As a consequence of \eqref{DeC6.2}, we also have
\begin{equation}\label{DeC6.3}
\begin{split}
\partial_y\Big[T^\ast_{2,S}g(y)\Big]&=\partial_y\int_{0}^1\mathcal{G}_{k}(y,z;\lambda+i\epsilon)\varphi^\ast\big(\frac{z-y_{\ast}}{\delta}\big)\frac{b''(z)g(z)}{b'(z)}\partial_z\log{\frac{b(z)-\lambda+i\epsilon}{\delta^2}}dz\\
&=-\int_0^1\bigg[\partial_y(\partial_z+\partial_y)\mathcal{G}_{k}(y,z;\lambda,\epsilon)\varphi^\ast\big(\frac{z-y_{\ast}}{\delta}\big)\frac{b''(z)g(z)}{b'(z)}\bigg]\log{\frac{b(z)-\lambda+i\epsilon}{\delta^2}}dz\\
&\quad+\int_0^1\bigg[\partial^2_y\mathcal{G}_{k}(y,z;\lambda+i\epsilon)\varphi^\ast\big(\frac{z-y_{\ast}}{\delta}\big)\frac{b''(z)g(z)}{b'(z)}\bigg]\log{\frac{b(z)-\lambda+i\epsilon}{\delta^2}}dz\\
&\quad-\int_0^1\partial_y\mathcal{G}_{k}(y,z;\lambda+i\epsilon)\partial_z\bigg[\varphi^\ast\big(\frac{z-y_{\ast}}{\delta}\big)\frac{b''(z)g(z)}{b'(z)}\bigg]\log{\frac{b(z)-\lambda+i\epsilon}{\delta^2}}dz.
\end{split}
\end{equation}
Note that on the support of $\varphi^\ast(\frac{z-y_{\ast}}{\delta})$, we have 
\begin{equation}\label{DeC6.31}
|b'(z)|\approx \delta, \quad \varrho(z;\lambda+i\epsilon)\approx \delta.
\end{equation}
The desired bound \eqref{DeC5.50} for $\diamond=S$ follows from \eqref{DeC6.2}-\eqref{DeC6.3} and corollary \ref{mGk50}, and we have, in addition,
\begin{equation}\label{Dec6.32}
\begin{split}
&(\delta+|\epsilon|^{1/2})^{-1/2}\bigg\|\partial_y\bigg\{\partial_yT^\ast_{2,S}g(y)+\varphi^\ast\big(\frac{y-y_{\ast}}{\delta}\big)\frac{b''(y)g(y)}{b'(y)}\log{\frac{b(y)-\lambda+i\epsilon}{\delta^2}}\bigg\}\bigg\|_{L^2(S_{3(\delta+|\epsilon|^{1/2})})}\\
&\lesssim\delta^{-1/4} \Big[1+|k|(|\lambda-b(y_\ast)|^{1/2}+|\epsilon|^{1/2})\Big]^{-1/4}\|g\|_{X_{N,\varrho_k}(I)}.
\end{split}
\end{equation}

{\it Case II: $1/|k|\ll |\lambda-b(y_{\ast})|^{1/2}+|\epsilon|^{1/2}$.} This case is analogous to {\it Case I}, using Lemma \ref{mGk4} and Lemma \ref{mGk30}. 

Finally we turn to the assumption that $|\epsilon|^{1/2}\lesssim \delta$. Suppose $|\epsilon|^{1/2}\gg \delta$, then the factor $\frac{1}{b(z)-\lambda+i\epsilon}$ is not truly singular, and the desired bounds \eqref{DeC5.4} follow directly from the bounds on the modified Green's function $\mathcal{G}_{k}(y,z;\lambda+i\epsilon)$ from Lemma  \ref{mGk4} and Lemma \ref{mGk30}. Indeed, we have the stronger bound
\begin{equation}\label{DeC6.33}
\|T^\ast_{2}g\|_{X_{N,\varrho_k}(I)}\lesssim \frac{\delta}{\sqrt{|\epsilon|}}\|g\|_{X_{N,\varrho_k}(I)},
\end{equation}
which will be useful below. 
\end{proof}

The following limiting absorption principle plays an essential role in establishing the vorticity depletion phenomenon. 
\begin{lemma}\label{DeC7}
There exist positive numbers $\epsilon_0, \kappa$ such that the following statement holds. 

For $\epsilon\in[-\epsilon_0,\epsilon_0]\backslash\{0\}$, $\lambda\in \Sigma_{\delta_0}$,  $k\in\mathbb{Z}\backslash\{0\}$, and $X\in\{X_{N,\varrho_k}(I),X_{L,\varrho_k}(I)\}$,
\begin{equation}\label{DeC8}
\|(I+T^\ast_{k}(\lambda+i\epsilon))g\|_{X}\ge \kappa\| g\|_{X}, \quad {\rm for\,\,all}\,\,g\in H^{1}_k(I).
\end{equation}

\end{lemma}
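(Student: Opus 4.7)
The plan is to argue by contradiction, paralleling Lemma \ref{T5} but with extra care for the degenerate scaling near $y_*$. Suppose the conclusion fails: there are sequences $k_j \in \mathbb{Z}\setminus\{0\}$, $\lambda_j \in \Sigma_{\delta_0}$, $\epsilon_j \in [-\epsilon_0,\epsilon_0]\setminus\{0\}$ with $\epsilon_j \to 0$, and $g_j \in H^1_{k_j}(I)$ with $\|g_j\|_X = 1$ and $\|(I+T^\ast_{k_j}(\lambda_j+i\epsilon_j))g_j\|_X \to 0$. Setting $\delta_j := \delta(\lambda_j)$, Lemma \ref{DeC4} gives
$$\|T^\ast_{k_j}(\lambda_j+i\epsilon_j)\|_{X\to X} \lesssim (1+|k_j|(\delta_j+|\epsilon_j|^{1/2}))^{-1/4}.$$
If $|k_j|(\delta_j+|\epsilon_j|^{1/2}) \to \infty$ along a subsequence, then $T^\ast$ becomes a contraction and $\|(I+T^\ast)g_j\|_X \geq \tfrac12 \|g_j\|_X$, a contradiction. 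Hence $|k_j|(\delta_j+|\epsilon_j|^{1/2})$ stays bounded, and the proof splits according to whether $k_j$ is bounded or $|k_j|\to\infty$.

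In the bounded regime, extract subsequences so that $k_j\equiv k_*\in\mathbb{Z}\setminus\{0\}$ and $\lambda_j\to \lambda_*\in \overline{\Sigma_{\delta_0}}$. The $X$-norm control on $g_j$ together with the Green's function bounds of Lemma \ref{mGk4} and Corollary \ref{mGk50} produces uniform $H^1$ bounds on compact subintervals of $I$ away from $y_*$, while the $L^\infty$ part of the $X$-norm controls $g_j$ near $y_*$. The integration-by-parts identity \eqref{DeC6.2} used in Lemma \ref{DeC4} provides enough smoothing for $T^\ast_{k_j}g_j$ to be equicontinuous, so after a further subsequence $g_j \to g$ in a topology strong enough that $\|g\|_X \gtrsim 1$, the key point being that the weight in $X$ pushes mass away from $y_*$ so no mass can escape in the limit. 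Applying the differential operator inverted by $\mathcal{G}_{k_*}$ to the identity $(I+T^\ast_{k_*})g=0$, the two cutoff factors $\varphi((y-y_*)/\delta_0)-\varphi((y-y_*)/\delta)$ and $1-\varphi((y-y_*)/\delta_0)+\varphi((y-y_*)/\delta)$ sum to $1$, so $g$ satisfies the homogeneous Rayleigh equation associated to $(k_*,\lambda_*)$ on $(0,1)$ with Dirichlet data. Taking the $\epsilon \to 0$ trace in the Plemelj sense yields the generalized embedded eigenvalue equation \eqref{emb2} when $\lambda_* \ne b(y_*)$, and a standard embedded eigenvalue $L_{k_*}g=\lambda_*g$ when $\lambda_*=b(y_*)$ (in which case the $\varrho_k^{-7/4}$ weight in $X_{N,\varrho_k}$ forces $g \in L^2(I)$). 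Both options contradict Assumption \ref{Massum}(2).

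In the regime $|k_j|\to\infty$, one has $\delta_j+|\epsilon_j|^{1/2}\to 0$, and the exponential-decay bound in \eqref{mGk51} restricts the effective interaction range of $T^\ast_{k_j}$ to a window of width $1/|k_j|$ about the source variable. A refined splitting based on \eqref{mGk5}, \eqref{mGk5.6}, and \eqref{mGk5.61} in the three regions $|y-y_*| \lesssim 1/|k_j|$, $1/|k_j|\ll |y-y_*|\ll 1$, and $|y-y_*| \sim 1$ then shows $\|T^\ast_{k_j}g_j\|_X = o(1)$, contradicting the normalization $\|g_j\|_X = 1$ together with $\|(I+T^\ast)g_j\|_X\to 0$. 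The main obstacle is the borderline subcase $\lambda_*=b(y_*)$ within the bounded-$k_j$ regime: one must both exclude loss of $X$-mass at $y_*$ in the weak-limit step and verify that the limiting singular distributional equation matches precisely the formulation required to invoke the spectral assumption \ref{Massum}(2).
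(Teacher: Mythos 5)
Your framework (contradiction, ruling out $|k_j|(\delta_j+|\epsilon_j|^{1/2})\to\infty$ via Lemma~\ref{DeC4}, and reducing the noncon\-cen\-trat\-ing case to an embedded eigenvalue contradiction with Assumption~\ref{Massum}) matches the paper's, but the proposal has a genuine gap exactly at the point you yourself flag as ``the main obstacle'': the case where $\lambda_\ast=b(y_\ast)$ and the $X$-mass of $g_j$ concentrates at $y_\ast$ at the vanishing scale $\delta_j+|\epsilon_j|^{1/2}$. In that scenario, your weak-limit $g$ on $I\setminus\{y_\ast\}$ is simply zero, so there is no nontrivial solution of the homogeneous Rayleigh equation to contradict the spectral assumption; the $\varrho_k^{-7/4}$ weight does not prevent this --- it is precisely what allows all of the mass to sit in the shrinking window $S_{3(\delta_j+|\epsilon_j|^{1/2})}$ consistently with $\|g_j\|_X=1$. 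The paper resolves this (its Case~II) with a separate idea that your proposal does not contain: a blow-up rescaling $y-y_\ast=\delta_\ell Y$, $g_\ell(y)=(|\lambda_\ell-b(y_\ast)|+|\epsilon_\ell|)^{7/8}H_\ell(Y)$, which after passing to the limit produces a nontrivial $H\in H^1(\R)$ solving $(\beta^2-\partial_Y^2)H+\frac{b''(y_\ast)}{b''(y_\ast)Y^2/2+\gamma+i\alpha}H=0$ (or its $\alpha\to0$ limit). This is ruled out not by Assumption~\ref{Massum} but by the spectral stability of the model quadratic flow $(b''(y_\ast)Y^2/2,0)$ on $\mathbb{R}$, via Rayleigh's inflection point criterion, together with \eqref{DeC6.33} to dispose of the subcase $\delta_\ell=o(|\epsilon_\ell|^{1/2})$.

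Two secondary issues. First, your treatment of the $|k_j|\to\infty$ regime is not justified: once you have ruled out $|k_j|(\delta_j+|\epsilon_j|^{1/2})\to\infty$, Lemma~\ref{DeC4} gives no smallness for $T^\ast_{k_j}$, and the exponential decay in \eqref{mGk51} alone does not make $\|T^\ast_{k_j}g_j\|_X=o(1)$ when $g_j$ is supported at scale $\delta_j+|\epsilon_j|^{1/2}\sim 1/|k_j|$. The paper's rescaling argument covers this regime automatically (the parameter $\beta$ in \eqref{DeC16} absorbs the limit of $k_\ell\delta_\ell$). Second, when $\lambda_\ast=b(y_\ast)$ and the mass does \emph{not} concentrate at $y_\ast$ (the paper's Case~I), the limit $g$ satisfies the pointwise vanishing $|g(y)|\lesssim|y-y_\ast|^{7/4}$ (from the $X_{N,\varrho_k}$ weight), which is what makes $\frac{b''(y)}{b(y)-b(y_\ast)}g(y)$ integrable and $g$ a genuine embedded eigenfunction; saying merely ``$g\in L^2$'' undersells the role of the weight here. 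So: correct skeleton, but the blow-up/Rayleigh-criterion step is a missing idea, not a detail.
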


\begin{proof}
We only consider the case $X=X_{N,\varrho_k}(I)$ as the other case is analogous. We prove \eqref{DeC8} by a contradiction argument.  Assume \eqref{DeC8} does not hold for any $\epsilon_0>0$. Then there exist for $\ell\in \Z\cap[1,\infty)$,
\begin{equation}\label{DeC9}
\lambda_\ell\to\lambda_\ast\in \Sigma_{\delta_0}, \,\,\epsilon_\ell\neq0 \,\,{\rm with}\,\,\epsilon_\ell\to0, \,\,k_\ell\to k_\ast\in (\Z\backslash\{0\})\cup\{\pm\infty\},
\end{equation}
and functions $g_\ell$ satisfying
\begin{equation}\label{DeC9.1}
\|g_\ell\|_{X_{N,\varrho_{k_\ell}}(I)}=1
\end{equation} 
such that 
\begin{equation}\label{DeC10}
\big\|(I+T^\ast_{k_\ell}(\lambda_\ell+i\epsilon_\ell))g_\ell\big\|_{X_{N,\varrho_{k_\ell}}(I)}\to 0.
\end{equation}
We can assume that $\lambda_\ast=b(y_\ast)$, otherwise the proof follows from the argument in the non-degenerate case. We consider several cases. 

{\it Case I: $\limsup_{\ell\to\infty}\|g_\ell\|_{H^1(I\backslash S_{\delta_0})}>0$.} By the bound \eqref{DeC5.3}, we can assume that $k_\ast\in \Z\backslash\{0\}$. By the bounds \eqref{DeC9.1} and \eqref{DeC10}, we can assume (passing to a subsequence if necessary) that
\begin{equation}\label{DeC11}
g_\ell\to g, \,\,{\rm in}\,\, H^1_{{\rm loc}}(I\backslash \{y_\ast\})\,\,{\rm as}\,\,\ell\to\infty, \quad g(0)=g(1)=0.\end{equation}
Then it follows from \eqref{DeC9.1} and \eqref{DeC10} that $g$ satisfies
\begin{equation}\label{DeC11.1}
|g(y)|\lesssim |y-y_{\ast}|^{7/4},
\end{equation}
and for $y\in (0,1)$, 
\begin{equation}\label{DeC12}
(k_\ast^2-\partial_y^2)g(y)+\frac{b''(y)}{b(y)-b(y_{\ast})}g(y)=0,
\end{equation}
which imply that $b(y_\ast)$ is an embedded eigenvalue for $L_k$, a contradiction to the spectral assumption. 

{\it Case II: $\limsup_{\ell\to\infty}\|g_\ell\|_{H^1(I\backslash S_{\delta_0})}=0$.} By the bound \eqref{DeC5} we can assume that $|k_\ell|(\delta_\ell+|\epsilon_\ell|^{1/2})\lesssim 1$. In this case,  using \eqref{DeC10}, we obtain that (passing to a subsequence if necessary)
\begin{equation}\label{DeC13}
\begin{split}
&\big\|(|\lambda_\ell-b(y_{\ast})|+|\epsilon|)^{-9/8}g_\ell\big\|_{L^2([y_{\ast}-\delta_\ell-|\epsilon_\ell|^{1/2}, \,y_{\ast}+\delta_\ell+|\epsilon_\ell|^{1/2}])}\\
&+\big\|(|\lambda_\ell-b(y_{\ast})|+|\epsilon|)^{-5/8}\partial_yg_\ell\big\|_{L^2([y_{\ast}-\delta_\ell-|\epsilon_\ell|^{1/2}, \,y_{\ast}+\delta_\ell+|\epsilon_\ell|^{1/2}])}\ge \sigma>0,
\end{split}
\end{equation}
where we recall from \eqref{mGk3} that
\begin{equation}\label{DeC14}
\delta_\ell\approx|\lambda_\ell-b(y_{\ast})|^{1/2}. 
\end{equation}

We divide into several subcases.

{\it Subcase II.1: $|\epsilon_\ell|^{1/2}\approx \delta_\ell$ for a subsequence.}

Define the change of variables for $\ell\ge1, y\in I$,
\begin{equation}\label{DeC15}
y-y_{\ast}=\delta_\ell Y, \quad g_\ell(y):=(|\lambda_\ell-b(y_{\ast})|+|\epsilon_\ell|)^{7/8}H_\ell(Y). 
\end{equation}
It follows from \eqref{Dec6.32} that we can extract a nontrivial limit $H\in H^1(\R)$ of $H_\ell$ satisfying for $Y\in \R$,
\begin{equation}\label{DeC16}
(\beta^2-\partial_Y^2)H(Y)+\frac{b''(y_{\ast})}{b''(y_{\ast})Y^2/2+\gamma+i\alpha}H(Y)=0,
\end{equation}
where $\beta\in\R, \alpha, \gamma\in\R\backslash\{0\}.$ This is impossible since the shear flow $(b''(y_\ast)Y^2/2,0), Y\in\R$ is spectrally stable, thanks to Rayleigh's inflection point criteria.

{\it Subcase II.2: $|\epsilon_\ell|^{1/2}=o( \delta_\ell)$ for a subsequence. } Passing to a subsequence and using rescaling as in \eqref{DeC15} we can extract a nontrivial limit $H\in H^1(\R)$, such that 
\begin{equation}\label{DeC17}
(\beta^2-\partial_Y^2)H(Y)+\lim_{\epsilon\to0}\frac{b''(y_{\ast})}{b''(y_{\ast})Y^2/2+\gamma+i\epsilon}H(Y)=0. 
\end{equation}
This is again impossible since the shear flow $(b''(y_\ast)Y^2/2,0), Y\in\R$ is spectrally stable. 

{\it Subcase II.3: $\delta_\ell=o(|\epsilon_\ell|^{1/2})$ for a subsequence. } This case is not possible thanks to the bound \eqref{DeC6.33}.
The lemma is now proved. 

\end{proof}

%
%
\section{Bounds on $\psi^\iota_{k,\epsilon}$: the non-degenerate case}
 In this section we obtain bounds on $\psi_{k,\epsilon}^\iota(y,\lambda)$  in the non-degenerate case, i.e. when $\lambda\in \Sigma\backslash\Sigma_{\delta_0/2}$.
 Since the arguments are analogous to those in \cite{JiaL}, we will be brief in the proofs, and provide only comments on the main ideas involved. 
 
 We begin with the following preliminary bounds.
 \begin{lemma}\label{bsdn1}
For $\lambda\in \Sigma\backslash\Sigma_{\delta_0/2},  k\in\mathbb{Z}\backslash\{0\}, \iota\in\{\pm\}$ and $0<\epsilon<\epsilon_0$, we have the bounds
\begin{equation}\label{bsdn2}
\|\psi_{k,\epsilon}^\iota(\cdot,\lambda)\|_{H^1_k(I)}\lesssim|k|^{-1/2}\|\omega_{0k}\|_{H^1_k(I)}.
\end{equation}
\end{lemma}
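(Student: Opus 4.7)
The plan is to recast \eqref{F7} as the fixed-point equation
\begin{equation*}
\psi^\iota_{k,\epsilon}(\cdot,\lambda)+T_{k,\lambda,\iota\epsilon}\psi^\iota_{k,\epsilon}(\cdot,\lambda)=F_\omega,\qquad F_\omega(y):=\int_0^1 G_k(y,z)\frac{\omega_0^k(z)}{b(z)-\lambda+i\iota\epsilon}\,dz,
\end{equation*}
obtained by applying $G_k$ to the Rayleigh equation \eqref{F7}. The limiting absorption principle of Lemma \ref{T5} says that $I+T_{k,\lambda,\iota\epsilon}$ is boundedly invertible on $H^1_k(I)$ uniformly in $\lambda\in\Sigma\setminus\Sigma_{\delta_0/2}$, $k\in\Z\setminus\{0\}$, and $0<|\epsilon|<\epsilon_0$, so the claimed bound \eqref{bsdn2} is reduced to the source-term estimate $\|F_\omega\|_{H^1_k(I)}\lesssim |k|^{-1/2}\|\omega_0^k\|_{H^1_k(I)}$.

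For the estimate on $F_\omega$ I would mimic the proof of Lemma \ref{T2}: split $F_\omega=F_{\omega,1}+F_{\omega,2}$ by inserting $\Phi(z)$ and $1-\Phi(z)$ into the integrand. Since $\lambda\in\Sigma\setminus\Sigma_{\delta_0/2}$ and $b$ attains its minimum at $y_\ast$, one verifies $|b(z)-\lambda+i\iota\epsilon|\gtrsim 1$ on $\operatorname{supp}\Phi$, so Schur's test together with the bounds \eqref{Gk1.1} immediately gives $\|F_{\omega,1}\|_{H^1_k(I)}\lesssim |k|^{-2}\|\omega_0^k\|_{L^2(I)}$, which is far better than needed. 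For $F_{\omega,2}$ we exploit the integration-by-parts identity
\begin{equation*}
\frac{1}{b(z)-\lambda+i\iota\epsilon}=\frac{1}{b'(z)}\,\partial_z\log\bigl(b(z)-\lambda+i\iota\epsilon\bigr),
\end{equation*}
legitimate since $|b'|\gtrsim 1$ on $\operatorname{supp}(1-\Phi)$, with vanishing boundary contributions from $G_k|_{z\in\{0,1\}}=0$. For the $y$-derivative $\partial_y F_{\omega,2}$ we further use \eqref{Gk1.2}, $\partial_y\partial_z G_k=\delta(y-z)+F_k(y,z)$, to extract an explicit singular trace $-(1-\Phi(y))\omega_0^k(y)\log(b(y)-\lambda+i\iota\epsilon)/b'(y)$, entirely analogous to the structural term appearing in \eqref{T3.1}. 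Since $\log(b-\lambda+i\iota\epsilon)$ lies in $L^p(I)$ for every $p<\infty$ with norm uniform in small $\epsilon$, the weighted bounds \eqref{Gk1.1}, \eqref{Gk3.1} on $G_k$ and $F_k$, coupled with the decay $\sup_y\|\partial_y G_k(y,\cdot)\|_{L^2(z\in I)}\lesssim |k|^{-1/2}$, produce the required $|k|^{-1/2}$ gain on both $\|F_\omega\|_{L^2}$ and $|k|^{-1}\|\partial_y F_\omega\|_{L^2}$.

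The principal obstacle is balancing the logarithmic singularity of $\log(b-\lambda+i\iota\epsilon)$ against the modest Sobolev regularity of $\omega_0^k\in H^1_k$: allowing the logarithm to pair with $\|\omega_0^k\|_{L^\infty}$ would cost a factor $|k|^{1/2}$ via the one-dimensional Gagliardo--Nirenberg inequality $\|\omega_0^k\|_{L^\infty}\lesssim\|\omega_0^k\|_{L^2}^{1/2}\|\partial_z\omega_0^k\|_{L^2}^{1/2}\lesssim |k|^{1/2}\|\omega_0^k\|_{H^1_k}$, and would spoil the $|k|^{-1/2}$ gain. One must instead distribute the logarithm in $L^p$ for a large but finite $p$, pair it with $\omega_0^k$ in the dual $L^q$, and absorb the remaining $|k|$-factors using the kernel bounds in \eqref{Gk1.1}--\eqref{Gk3.1}. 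This delicate bookkeeping is carried out in a closely related setting in \cite{JiaL}, and my plan is essentially to reproduce it here.
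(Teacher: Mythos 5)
Your proposal matches the paper's proof, which is stated in one line as ``follows directly from the Rayleigh equation \eqref{F7} and Lemma \ref{T5}, once we use the Green's function $G_k$ to invert $k^2-\partial_y^2$ and formulate \eqref{F7} as an integral equation.'' You have correctly identified the integral reformulation $\psi^\iota_{k,\epsilon}+T_{k,\lambda,\iota\epsilon}\psi^\iota_{k,\epsilon}=F_\omega$, invoked the limiting absorption principle (Lemma \ref{T5}) to reduce to the source term, and the source estimate you sketch via the splitting of Lemma \ref{T2} together with the kernel bounds \eqref{Gk1.1}--\eqref{Gk3.1} is precisely the content the paper leaves implicit.

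One small remark: you need not be as nervous about the Gagliardo--Nirenberg step as you suggest. For the extracted trace term in $\partial_y F_{\omega,2}$ one may indeed put $\omega_0^k$ in $L^\infty$ (costing $|k|^{1/2}$), since the $H^1_k$ norm carries the compensating $|k|^{-1}$ in front of $\partial_y$; for the remaining terms the $L^1(z)$ bound of the weighted kernels in \eqref{Gk1.1} paired with $\|\omega_0^k\|_{L^\infty}$ already yields $|k|^{-1/2}$ up to logarithmic factors, which the paper absorbs into the implicit constant (compare with the $|k|^{-1/3}$ loss already tolerated in \eqref{T3}). So the delicate $L^p$-distribution of the logarithm, while available, is not strictly forced here.
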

\begin{proof}
The desired bounds \eqref{bsdn2} follow directly from the Rayleigh equation \eqref{F7} and Lemma \ref{T5}, once we use the Green's function $G_k$ to invert $k^2-\partial_y^2$ and formulate \eqref{F7} as an integral equation.
\end{proof}

To obtain control on $\partial_\lambda \psi_{k,\epsilon}^\iota(\cdot,\lambda)$ for $\lambda\in \Sigma\backslash\Sigma_{\delta_0/2}$, we take derivative in \eqref{F7}, and obtain that 
\begin{equation}\label{bsdn2.5}
\begin{split}
(k^2-\partial_y^2)\partial_{\lambda}\psi^{\iota}_{k,\epsilon}(y,\lambda)+\frac{b''(y)\partial_{\lambda}\psi^{\iota}_{k,\epsilon}(y,\lambda)}{b(y)-\lambda+i\iota\epsilon}=\frac{\omega_0^k(y)}{(b(y)-\lambda+i\iota\epsilon)^2}-\frac{b''(y)\psi^{\iota}_{k,\epsilon}(z,\lambda)}{(b(y)-\lambda+i\iota\epsilon)^2},
\end{split}
\end{equation}
for $y\in I$ with zero boundary value at $y\in\{0,1\}$. 
 Reformulating \eqref{bsdn2.5} as an integral equation, we obtain that 
\begin{equation}\label{bsdn3}
\begin{split}
&\partial_{\lambda}\psi^{\iota}_{k,\epsilon}(y,\lambda)+\int_0^1G_k(y,z)\frac{b''(z)\partial_{\lambda}\psi^{\iota}_{k,\epsilon}(z,\lambda)}{b(z)-\lambda+i\iota\epsilon}\,dz\\
&=\int_0^1G_k(y,z)\frac{\omega_0^k(z)}{(b(z)-\lambda+i\iota\epsilon)^2}\,dz-\int_0^1G_k(y,z)\frac{b''(z)\psi^{\iota}_{k,\epsilon}(z,\lambda)}{(b(z)-\lambda+i\iota\epsilon)^2}\,dz.
\end{split}
\end{equation}
Recall the definition of the smooth cutoff function $\Phi$ below \eqref{T1}. We have the following bounds for $\partial_{\lambda}\psi^{\iota}_{k,\epsilon}(y,\lambda)$ when $\lambda\in \Sigma\backslash\Sigma_{\delta_0}$. 

\begin{lemma}\label{L0.2}
For $\lambda\in \Sigma\backslash\Sigma_{\delta_0/2},  k\in\mathbb{Z}\backslash\{0\}, \iota\in\{\pm\}$ and $0<\epsilon<\epsilon_0$, 
$\partial_{\lambda}\psi^{\iota}_{k,\epsilon}(y,\lambda)$ satisfies the following decomposition
\begin{equation}\label{L0.21}
\begin{split}
\partial_{\lambda}\psi^{\iota}_{k,\epsilon}(y,\lambda)=&\bigg[\frac{b'(y_0)\omega^k_0(y)}{|b'(y)|^2}-\frac{b''(y)\psi^{\iota}_{k,\epsilon}(y,\lambda)}{|b'(y)|^2}\bigg](1-\Phi(y))\log{(b(y)-\lambda+i\iota\epsilon)}\\  
&\\
         &+\sum_{\sigma=0,1}\omega_0^k(\sigma)\Psi^{\iota}_{k,\sigma,\epsilon}(y,\lambda)\log{(b(\sigma)-\lambda+i\iota\epsilon)}+\mathcal{R}^{\iota}_{\sigma,k,y_0,\epsilon}(y).
\end{split}
\end{equation}
In the above for $\sigma\in\{0,1\}$, $\iota\in\{\pm\}$, $0<\epsilon<\epsilon_0$, and $\lambda\in\Sigma\backslash\Sigma_{\delta_0/2}$,
\begin{equation}\label{L0.211}
\left\|\mathcal{R}^{\iota}_{\sigma,k,y_0,\epsilon}\right\|_{H^1_k(I)}\lesssim |k|^{1/2} \|\omega_{0k}\|_{H^2_k(I)}, \quad \left\|\Psi^{\iota}_{k,\sigma,\epsilon}(\cdot,\lambda)\right\|_{H^1_k(I)}\lesssim |k|^{-1/2}.
\end{equation}

\end{lemma}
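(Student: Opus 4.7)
The approach is to view \eqref{bsdn3} as a perturbed integral equation of the form $(I+T_{k,\lambda,\epsilon})\partial_\lambda\psi^\iota_{k,\epsilon}(\cdot,\lambda)=S^\iota$, isolate the logarithmic singularities in $S^\iota$ explicitly by integrating by parts twice in $z$, and invert $I+T_{k,\lambda,\epsilon}$ on the regular residual via Lemma~\ref{T5}. First I would split each integrand in \eqref{bsdn3} using $\Phi(z)+(1-\Phi(z))=1$: on $\mathrm{supp}\,\Phi$, since $\lambda\in\Sigma\setminus\Sigma_{\delta_0/2}$ and $\Phi$ is supported in $(y_\ast-\delta_0/3,y_\ast+\delta_0/3)$, the factor $(b(z)-\lambda+i\iota\epsilon)^{-2}$ is bounded uniformly in $\epsilon$, so these pieces are regular and contribute directly to $\mathcal{R}^\iota$.

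On the $(1-\Phi)$-pieces, set $h(z):=\omega_0^k(z)-b''(z)\psi^\iota_{k,\epsilon}(z,\lambda)$ and apply the two identities
$$\frac{1}{(b(z)-\lambda+i\iota\epsilon)^2}=-\frac{1}{b'(z)}\partial_z\frac{1}{b(z)-\lambda+i\iota\epsilon},\qquad \frac{1}{b(z)-\lambda+i\iota\epsilon}=\frac{1}{b'(z)}\partial_z\log(b(z)-\lambda+i\iota\epsilon),$$
integrating by parts in $z$ twice. The boundary evaluations at $\sigma\in\{0,1\}$, combined with the Dirichlet condition $\psi^\iota_{k,\epsilon}(\sigma,\lambda)=0$, retain only the $\omega_0^k(\sigma)\log(b(\sigma)-\lambda+i\iota\epsilon)$ contributions to $S^\iota$; the resulting kernels, built from $G_k(y,\sigma)$ and $\partial_zG_k(y,\sigma)$ and controlled via \eqref{Gk1.1}, become (after inverting $I+T_{k,\lambda,\epsilon}$) the profiles $\Psi^\iota_{k,\sigma,\epsilon}$. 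The leftover interior residue is a $y$-dependent log with coefficient $h(y)(1-\Phi(y))/(b'(y))^2$, extracted from the delta in $\partial_y\partial_zG_k$ via \eqref{Gk1.2}; this is precisely the first explicit term on the right of \eqref{L0.21}.

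With these three log families set aside, $\mathcal{R}^\iota$ satisfies $(I+T_{k,\lambda,\epsilon})\mathcal{R}^\iota=\widetilde S^\iota$, and I would verify $\|\widetilde S^\iota\|_{H^1_k(I)}\lesssim |k|^{1/2}\|\omega_{0k}\|_{H^2_k(I)}$. The $H^1_k$-regularity of each piece involving $\log(b(\cdot)-\lambda+i\iota\epsilon)$ multiplied by smooth coefficients follows from the structural formula \eqref{T3.1}, which precisely accounts for how $T_{k,\lambda,\epsilon}$ acts on log-type inputs; the integration-by-parts procedure costs one $L^2$-derivative on $\omega_0^k$ and one on $\psi^\iota_{k,\epsilon}$ (the latter already controlled by \eqref{bsdn2}), together with one extra factor of $|k|$ entering through \eqref{Gk1.1} and \eqref{Gk3.1}. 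Then Lemma~\ref{T5} inverts $I+T_{k,\lambda,\epsilon}$ with a uniform constant and yields \eqref{L0.211} for $\mathcal{R}^\iota$. Analogously $\Psi^\iota_{k,\sigma,\epsilon}$ solves $(I+T_{k,\lambda,\epsilon})\Psi^\iota_{k,\sigma,\epsilon}=G_k(\cdot,\sigma)/(b'(\sigma))^2+\cdots$, and since $\|G_k(\cdot,\sigma)\|_{H^1_k(I)}\lesssim |k|^{-1/2}$ by \eqref{Gk1.1}, Lemma~\ref{T5} gives $\|\Psi^\iota_{k,\sigma,\epsilon}\|_{H^1_k(I)}\lesssim |k|^{-1/2}$.

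The main obstacle is the careful bookkeeping in the double integration by parts: sorting the numerous produced terms into (i) the interior-log piece with coefficient $h/|b'|^2$, (ii) the two boundary logs $\log(b(\sigma)-\lambda+i\iota\epsilon)$, and (iii) $H^1_k$-residuals, while reconciling the procedure with \eqref{T3.1} so that the main-log ansatz is self-consistent modulo $H^1_k$-errors. Tracking the $|k|^{1/2}\|\omega_{0k}\|_{H^2_k}$ loss against the two by-parts (as opposed to the $|k|^{-1/2}\|\omega_{0k}\|_{H^1_k}$ bound in Lemma~\ref{bsdn1}) requires using the full strength of \eqref{Gk1.1}, \eqref{Gk3.1}, and \eqref{bsdn2} simultaneously.
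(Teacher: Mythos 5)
Your proposal follows essentially the same route as the paper (which gives only a sketch and defers the details to [JiaL]): rewrite \eqref{bsdn3} as $(I+T_{k,\lambda,\iota\epsilon})\partial_\lambda\psi^\iota_{k,\epsilon}=\mathcal{U}$, peel off the $\Phi$-supported regular part, integrate by parts twice in $z$ on the $(1-\Phi)$ part to expose the interior log and the boundary logs, and invert the regular residue via Lemma~\ref{T5}; this matches the paper's decomposition $(I+T)[\partial_\lambda\psi^\iota-\mathcal{U}]=-T\mathcal{U}$. One small correction to your bookkeeping: after the first integration by parts the boundary contributions carry $G_k(y,\sigma)$ and vanish identically by the Dirichlet condition $G_k(y,0)=G_k(y,1)=0$, so only $\partial_z G_k(y,\sigma)$ (i.e.\ $\mp\sinh(ky)/\sinh k$ and its mirror) survives as the boundary kernel, exactly as in \eqref{L8.31'} for the second-derivative analogue; correspondingly the $|k|^{-1/2}$ bound on $\Psi^\iota_{k,\sigma,\epsilon}$ comes from the explicit formula for $\partial_z G_k(\cdot,\sigma)$ rather than from the integrated estimate \eqref{Gk1.1}.
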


\begin{proof}
The basic idea is to expand the right hand side of \eqref{bsdn3} using integration by parts, and apply Lemma \ref{T5} after removing the most singular parts. Indeed, denoting schematically, 
\begin{equation}\label{LLL0.1}
\mathcal{U}:=\int_0^1G_k(y,z)\frac{\omega_0^k(z)}{(b(z)-\lambda+i\iota\epsilon)^2}\,dz-\int_0^1G_k(y,z)\frac{b''(z)\psi^{\iota}_{k,\iota\epsilon}(z,\lambda)}{(b(z)-\lambda+i\iota\epsilon)^2}\,dz,
\end{equation}
we note that $\partial_{\lambda}\psi^{\iota}_{k,\epsilon}(y,\lambda)-\mathcal{U}$ satisfies the equation (recalling \eqref{T1} for the definition of $T_{k,\lambda,\iota\epsilon}$),
\begin{equation}\label{LLL0.2}
(I+T_{k,\lambda,\iota\epsilon})\big[\partial_{\lambda}\psi^{\iota}_{k,\epsilon}(y,\lambda)-\mathcal{U}\big]=-T_{k,\lambda,\iota\epsilon}\mathcal{U}.
\end{equation}
The term $T_{k,\lambda,\iota\epsilon}\mathcal{U}\in H^1_k(I)$ (noting however that for the boundary terms we need to track the singular coefficient $\log{(b(\sigma)-\lambda+i\iota\epsilon)}, \sigma\in\{0,1\}$), and we can apply Lemma \ref{T5} to \eqref{LLL0.2} in order to obtain the desired conclusions.  We refer to \cite{JiaL} for the detailed proof.
\end{proof}

To obtain bounds on $\partial^2_\lambda\psi_{k,\epsilon}^\iota(y,\lambda)$ for $\lambda\in \Sigma\backslash\Sigma_{\delta_0/2}$,  we take two derivatives in \eqref{F7} and obtain that 
\begin{equation}\label{bsdn4}
\begin{split}
&(k^2-\partial_y^2)\partial^2_{\lambda}\psi^{\iota}_{k,\epsilon}(y,\lambda)+\frac{b''(y)\partial^2_{\lambda}\psi^{\iota}_{k,\epsilon}(y,\lambda)}{b(y)-\lambda+i\iota\epsilon}\\
&=2\frac{\omega_0^k(y)}{(b(y)-\lambda+i\iota\epsilon)^3}-2\frac{b''(y)\psi^{\iota}_{k,\epsilon}(z,\lambda)}{(b(y)-\lambda+i\iota\epsilon)^3}+\frac{b''(y)\partial_\lambda\psi^{\iota}_{k,\epsilon}(z,\lambda)}{(b(y)-\lambda+i\iota\epsilon)^2},
\end{split}
\end{equation}
for $y\in I$ with zero boundary value at $y\in\{0,1\}$. 
We can reformulate \eqref{bsdn4} in the integral form for $y\in I$, as
\begin{equation}\label{bsdn5}
\begin{split}
&\partial^2_{\lambda}\psi^{\iota}_{k,\epsilon}(y,\lambda)+\int_0^1G_k(y,z)\frac{b''(z)\partial^2_{\lambda}\psi^{\iota}_{k,\epsilon}(z,\lambda)}{b(z)-\lambda+i\iota\epsilon}\,dz\\
&=\int_0^1G_k(y,z)\bigg[2\frac{\omega_0^k(z)}{(b(z)-\lambda+i\iota\epsilon)^3}-2\frac{b''(z)\psi^{\iota}_{k,\epsilon}(z,\lambda)}{(b(z)-\lambda+i\iota\epsilon)^3}+\frac{b''(z)\partial_\lambda\psi^{\iota}_{k,\epsilon}(z,\lambda)}{(b(z)-\lambda+i\iota\epsilon)^2}\bigg]\,dz.
\end{split}
\end{equation}

We have the following bounds on $\partial_\lambda^2\psi_{k,\epsilon}^\iota(y,\lambda)$ for $\lambda\in\Sigma\backslash\Sigma_{\delta_0/2}$. 
\begin{lemma}\label{L1}
For $k\in\mathbb{Z}\backslash\{0\}, \iota\in\{\pm\}$ and $0<\epsilon<\epsilon_0$, we have  the following bound
\begin{equation}\label{L2.2}
\begin{split}
&\bigg\|\partial_{\lambda}^2\psi^{\iota}_{k,\epsilon}(y,\lambda)-\frac{\omega_0^k(1)\Phi^{1\iota}_{k,\epsilon}(y,\lambda)}{b(1)-\lambda+i\iota\epsilon}-\frac{\omega_0^k(0)\Phi^{0\iota}_{k,\epsilon}(y,\lambda)}{b(0)-\lambda+i\iota\epsilon}-\frac{b''(y)\psi^{\iota}_{k,\epsilon}(y,\lambda)-\omega_0^k(y)}{|b'(y)|^2(b(y)-\lambda+i\iota\epsilon)}\bigg\|_{L^2(y\in I, \lambda\in \Sigma\backslash\Sigma_{\delta_0/2})}\\
&\lesssim |k|^{3/2}\|\omega_{0k}\|_{H^3_k(I)}\end{split}\end{equation}
In the above the functions $\Phi^{\sigma\iota}_{k,\epsilon}, \sigma\in\{0,1\}$ satisfy the equation for $y\in I$
\begin{equation}\label{L8.31'}
\begin{split}
(I+T_{k,\lambda,\iota\epsilon})\Phi^{1\iota}_{k,\epsilon}&=\frac{\sinh{(ky)}}{|b'(1)|^2\sinh{k}},\\
(I+T_{k,\lambda,\iota\epsilon})\Phi^{0\iota}_{k,\epsilon}&=\frac{\sinh{(k(1-y))}}{|b'(0)|^2\sinh{k}}.
\end{split}
\end{equation}

\end{lemma}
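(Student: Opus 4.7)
The plan is to follow the same strategy as Lemma~\ref{L0.2}. Rewrite \eqref{bsdn5} abstractly as $(I+T_{k,\lambda,\iota\epsilon})\partial_\lambda^2\psi^\iota_{k,\epsilon} = \mathcal{V}^\iota_{k,\epsilon}$, where $\mathcal{V}^\iota_{k,\epsilon}$ denotes the right-hand side of \eqref{bsdn5}, and let $\Xi^\iota_{k,\epsilon}(y,\lambda)$ be the ansatz formed by the three explicit singular terms on the left-hand side of \eqref{L2.2}. The goal is to show that $\mathcal{V}^\iota_{k,\epsilon} - (I+T_{k,\lambda,\iota\epsilon})\Xi^\iota_{k,\epsilon}$ is bounded in $L^2(y\in I,\,\lambda\in \Sigma\backslash\Sigma_{\delta_0/2})$ with the claimed loss $|k|^{3/2}\|\omega_{0k}\|_{H^3_k(I)}$; then the uniform invertibility of $I+T_{k,\lambda,\iota\epsilon}$ on $H^1_k(I)$ from Lemma~\ref{T5}, applied pointwise in $\lambda$ together with the embedding $H^1_k(I)\hookrightarrow L^2(I)$, yields the conclusion. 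The two boundary pieces of $\Xi^\iota_{k,\epsilon}$ are designed precisely so that, because the scalar $1/(b(\sigma)-\lambda+i\iota\epsilon)$ commutes with $T_{k,\lambda,\iota\epsilon}$, equation \eqref{L8.31'} gives $(I+T_{k,\lambda,\iota\epsilon})\bigl[\omega_0^k(\sigma)\Phi^{\sigma\iota}_{k,\epsilon}/(b(\sigma)-\lambda+i\iota\epsilon)\bigr]$ equal exactly to $\omega_0^k(\sigma)\sinh(k(1-y))/[|b'(0)|^2\sinh k\,(b(0)-\lambda+i\iota\epsilon)]$ for $\sigma=0$ and analogously for $\sigma=1$.

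To produce these matching terms from $\mathcal{V}^\iota_{k,\epsilon}$, I would perform two integrations by parts in $z$ on each of the pieces $\int G_k\cdot 2\omega_0^k(z)/(b-\lambda+i\iota\epsilon)^3\,dz$ and $-\int G_k\cdot 2b''(z)\psi^\iota_{k,\epsilon}/(b-\lambda+i\iota\epsilon)^3\,dz$, and one IBP on $\int G_k\cdot b''\partial_\lambda\psi^\iota_{k,\epsilon}/(b-\lambda+i\iota\epsilon)^2\,dz$, using the identities $(b-\lambda+i\iota\epsilon)^{-3}=-(2b'(z))^{-1}\partial_z(b-\lambda+i\iota\epsilon)^{-2}$ and $(b-\lambda+i\iota\epsilon)^{-2}=-(b'(z))^{-1}\partial_z(b-\lambda+i\iota\epsilon)^{-1}$; these are valid on all of $I$ since $\lambda\in \Sigma\backslash\Sigma_{\delta_0/2}$ places the $z$-zeros of $b-\lambda$ away from $y_\ast$ (with any residual $|b'|\sim 0$ contribution handled separately via a cutoff $1-\Phi$). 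For the $\omega_0^k$-piece, the first IBP has vanishing boundary terms (as $G_k(y,\sigma)=0$), but the second IBP acts on $\partial_zG_k\cdot \omega_0^k(z)/|b'(z)|^2$ and produces the boundary contributions $\mp\partial_zG_k(y,\sigma)\omega_0^k(\sigma)/[|b'(\sigma)|^2(b(\sigma)-\lambda+i\iota\epsilon)]$; using $\partial_zG_k(y,0)=\sinh(k(1-y))/\sinh k$ and $\partial_zG_k(y,1)=-\sinh(ky)/\sinh k$, these match exactly the two outputs above and so cancel with $(I+T_{k,\lambda,\iota\epsilon})\Xi^\iota_{k,\epsilon,\mathrm{bdry}}$. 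For the $b''\psi^\iota$-piece, all boundary terms vanish by the Dirichlet condition $\psi^\iota_{k,\epsilon}(\sigma,\lambda)=0$. In both pieces, the second IBP produces a factor $\partial_z^2G_k=k^2G_k-\delta(y-z)$, and the delta distributions contribute the pointwise terms $-\omega_0^k(y)/[|b'(y)|^2(b(y)-\lambda+i\iota\epsilon)]$ and $+b''(y)\psi^\iota_{k,\epsilon}(y,\lambda)/[|b'(y)|^2(b(y)-\lambda+i\iota\epsilon)]$ respectively, which combine into exactly the diagonal term $D^\iota_{k,\epsilon}:=[b''(y)\psi^\iota_{k,\epsilon}(y,\lambda)-\omega_0^k(y)]/[|b'(y)|^2(b(y)-\lambda+i\iota\epsilon)]$ appearing in $\Xi^\iota_{k,\epsilon}$.

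The leftover interior integrals, together with $T_{k,\lambda,\iota\epsilon}D^\iota_{k,\epsilon}$ that arises from expanding $(I+T_{k,\lambda,\iota\epsilon})D^\iota_{k,\epsilon}$, all have the schematic form $\int G_k(y,z)\,Q(z,\lambda)/(b(z)-\lambda+i\iota\epsilon)\,dz$, with $Q(\cdot,\lambda)$ built from derivatives of $\omega_0^k,\,\psi^\iota_{k,\epsilon},\,\partial_\lambda\psi^\iota_{k,\epsilon}$ and the smooth coefficients, all controlled via Lemma~\ref{bsdn1} and Lemma~\ref{L0.2}. For $T_{k,\lambda,\iota\epsilon}D^\iota_{k,\epsilon}$ I would use the Rayleigh equation \eqref{F7} to rewrite $b''\psi^\iota-\omega_0^k = (b-\lambda+i\iota\epsilon)(-k^2+\partial_z^2)\psi^\iota_{k,\epsilon}$, which removes one factor of $(b-\lambda+i\iota\epsilon)^{-1}$ from the integrand and reduces it to the same schematic form. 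The desired $L^2(\lambda\in\Sigma\backslash\Sigma_{\delta_0/2})$ bound for such Cauchy-type integrals then follows from the change of variables $u=b(z)$ (smooth on the support since $|b'|\gtrsim 1$ there) and standard Hilbert-transform estimates, uniformly in $\epsilon$; the $\log(b(\sigma)-\lambda+i\iota\epsilon)$ singularities appearing in the expansion of $\partial_\lambda\psi^\iota_{k,\epsilon}$ from Lemma~\ref{L0.2} are themselves $L^2$-in-$\lambda$ and contribute harmlessly. The main obstacle is the careful bookkeeping across the several integrations by parts: verifying that the boundary contributions produce exactly the $\sinh$-factors matching $(I+T_{k,\lambda,\iota\epsilon})\Phi^{\sigma\iota}_{k,\epsilon}$ with the right signs, isolating the delta contributions from $\partial_z^2G_k$ into the clean diagonal expression $D^\iota_{k,\epsilon}$, and tracking $|k|$-dependent weights so that the total loss matches $|k|^{3/2}\|\omega_{0k}\|_{H^3_k(I)}$ (with the $|k|^{3/2}$ traceable to the $L^2$ norm of $\partial_z^2 G_k$ and the $|k|^{-1/2}$ $H^1_k$-decay of $\Phi^{\sigma\iota}_{k,\epsilon}$ given by Lemma~\ref{T5} applied to \eqref{L8.31'}, whose source has $H^1_k$-norm $\lesssim |k|^{-1/2}$).
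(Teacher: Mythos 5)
Your proof is correct and follows essentially the same strategy as the paper: integrate by parts twice on the right-hand side of \eqref{bsdn5} to isolate the boundary and diagonal singularities (using the Rayleigh equation \eqref{F7} to remove one power of $(b-\lambda+i\iota\epsilon)^{-1}$ from the diagonal term), and then invoke the limiting absorption principle, Lemma~\ref{T5}. The only difference is bookkeeping: the paper iterates the resolvent identity, writing $(I+T_{k,\lambda,\iota\epsilon})\big[\partial_\lambda^2\psi^\iota_{k,\epsilon}-\mathcal{U}^\ast+T_{k,\lambda,\iota\epsilon}\mathcal{U}^\ast\big]=T_{k,\lambda,\iota\epsilon}^2\mathcal{U}^\ast$, whereas you apply $(I+T_{k,\lambda,\iota\epsilon})$ directly to the explicit ansatz $\Xi^\iota_{k,\epsilon}$ and match terms with $\mathcal{U}^\ast$; the two decompositions account for the same singular pieces and rest on the same key estimates.
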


\begin{proof}
The main idea of the proof is to expand the right side of \eqref{bsdn5} and apply Lemma \ref{T5} after removing the most singular terms. Indeed, denoting schematically,
\begin{equation}\label{LLL2.0}
\mathcal{U}^\ast:=\int_0^1G_k(y,z)\bigg[2\frac{\omega_0^k(z)}{(b(z)-\lambda+i\iota\epsilon)^3}-2\frac{b''(z)\psi^{\iota}_{k,\iota\epsilon}(z,\lambda)}{(b(z)-\lambda+i\iota\epsilon)^3}+\frac{b''(z)\partial_\lambda\psi^{\iota}_{k,\iota\epsilon}(z,\lambda)}{(b(z)-\lambda+i\iota\epsilon)^2}\bigg]\,dz,
\end{equation}
we have 
\begin{equation}\label{LLL2.1}
(I+T_{k,\lambda,\iota\epsilon})\Big[\partial^2_{\lambda}\psi^{\iota}_{k,\epsilon}(y,\lambda)-\mathcal{U}^\ast+T_{k,\lambda,\iota\epsilon}\mathcal{U}^\ast\Big]=\big[T_{k,\lambda,\iota\epsilon}\big]^2\mathcal{U}^\ast.
\end{equation}
We note that $\partial^2_{\lambda}\psi^{\iota}_{k,\epsilon}(y,\lambda)-\mathcal{U}^\ast+T_{k,\lambda,\iota\epsilon}\mathcal{U}^\ast\in H^1_k(I)$ (however we again need to track the singularities in $\lambda$ in the boundary terms, involving $\log(b(\sigma)-\lambda+i\iota\epsilon)$ and $1/(b(\sigma)-\lambda+i\iota\epsilon)$ for $\sigma\in\{0,1\}$), and we can apply Lemma \eqref{T5} in order to obtain the desired conclusions. We refer to \cite{JiaL} for the detailed proof. 
\end{proof}

\section{Bounds on $\psi^\iota_{k,\epsilon}$: the degenerate case}
In this section we use the limiting absorption principle to study the Rayleigh equation \eqref{F7} for $\lambda\in \Sigma_{\delta_0}$. More precisely, write for $k\in\Z\backslash\{0\}, \iota\in\{\pm\}, \lambda\in \Sigma_{\delta_0}, 0<\epsilon<\epsilon_0$, (recall the definition of $\epsilon_0$ from Lemma \ref{DeC7})
\begin{equation}\label{BDC1}
\psi^\iota_{k,\epsilon}(y,\lambda)=\phi^\iota_{k,\epsilon}(y,\lambda)+\Psi(y)\frac{1}{b''(y)}\omega_{0k}(y),
\end{equation}
where $\Psi\in C_c^\infty(S_{3\delta_0})$ and $\Psi\equiv 1$ on $S_{2\delta_0}$. Recall that $S_d=[y_\ast-d,y_\ast+d]$ for $d>0$ from \eqref{LAP0.01}. Then $\phi^\iota_{k,\epsilon}(y,\lambda)$ satisfies for $y\in I$,
\begin{equation}\label{BDC2}
(k^2-\partial_y^2)\phi^\iota_{k,\epsilon}(y,\lambda)+\frac{b''(y)}{b(y)-\lambda+i\iota\epsilon}\phi^\iota_{k,\epsilon}(y,\lambda)=g^\iota_{k,\epsilon}(y,\lambda),\end{equation}
where for $k\in\Z\backslash\{0\}, \iota\in\{\pm\}, \lambda\in \Sigma_{\delta_0}, 0<\epsilon<\epsilon_0$
\begin{equation}\label{BDC3}
g^\iota_{k,\epsilon}(y,\lambda):=\frac{1-\Psi(y)}{b(y)-\lambda+i\iota\epsilon}\omega_{0k}(y)-(k^2-\partial_y^2)\Big[\frac{\Psi(y)}{b''(y)}\omega_{0k}(y)\Big].
\end{equation}

Our main results are bounds for the functions $\phi^\iota_{k,\epsilon}(y,\lambda)$. We begin with the following preliminary bounds.

\begin{lemma}\label{DCM1}
Assume that $k\in\Z\backslash\{0\}, \lambda\in \Sigma_{\delta_0}$ and let $\phi_{k,\epsilon}^{\iota}(y,\lambda)$ with $\iota\in\{\pm\}, \epsilon\in(0,\epsilon_0)$ be as defined in \eqref{BDC1}-\eqref{BDC2}. Recall from \eqref{mGk3} and \eqref{DeC1.92} that
\begin{equation}\label{DCM3.0}
\delta:=\delta(\lambda)=8\sqrt{|\lambda-b(y_\ast)|/|b''(y_\ast)|},\quad d_k=d_k(\lambda,\epsilon):=\big[|\lambda-b(y_\ast)|^{1/2}+|\epsilon|^{1/2}\big]\wedge \frac{1}{|k|}.
\end{equation}
 We have the bounds for $k\in\Z\backslash\{0\}, \epsilon\in(0,\epsilon_0), \iota\in\{\pm\}, \lambda\in \Sigma_{\delta_0}$,
\begin{equation}\label{DCM2}
\begin{split}
&\sum_{\alpha\in\{0,1\}}\big\|d_k^{-7/4+\alpha}\partial_y^{\alpha}\phi^\iota_{k,\epsilon}(y,\lambda)\big\|_{L^2\big([y_\ast-3(\delta+|\epsilon|^{1/2}),y_\ast+3(\delta+|\epsilon|^{1/2})]\big)}(\delta+|\epsilon|^{1/2})^{-1/2}\\
&+\sum_{\alpha\in\{0,1\}}\big\|(|y-y_\ast|\wedge d_k)^{-7/4+\alpha}\partial_y^\alpha\phi^\iota_{k,\epsilon}(y,\lambda)\big\|_{L^\infty\big([0,1]\backslash[y_\ast-3(\delta+|\epsilon|^{1/2}),y_\ast+3(\delta+|\epsilon|^{1/2})]\big)}\\
&\lesssim|k|^{5/2}\big\|\omega_{0k}\big\|_{H^3_k(I)}.
\end{split}
\end{equation}
Define for $y\in [0,1], k\in\Z\backslash\{0\}, \lambda\in \Sigma_{\delta_0}\backslash\{b(y_\ast)\}$,
\begin{equation}\label{DCM3}
\psi_k(y,\lambda):=\lim_{\epsilon\to0+}\Big[\psi_{k,\epsilon}^{+}(y,\lambda)-\psi_{k,\epsilon}^{-}(y,\lambda)\Big]=\lim_{\epsilon\to0+}\Big[\phi_{k,\epsilon}^{+}(y,\lambda)-\phi_{k,\epsilon}^{-}(y,\lambda)\Big].
\end{equation}
 Then we have the bounds for $\lambda \in  \Sigma_{\delta_0}\backslash\{b(y_\ast)\}$,
\begin{equation}\label{DCM4}
\begin{split}
&\sum_{\alpha\in\{0,1\}}\big\|(\delta\wedge |k|^{-1})^{-7/4+\alpha}\partial_y^{\alpha}\psi_k(y,\lambda)\big\|_{L^2([y_\ast-3\delta,y_\ast+3\delta])}\delta^{-1/2}\\
&+\sum_{\alpha\in\{0,1\}}\big\|(\delta\wedge |k|^{-1})^{-11/4}(|y-y_\ast|\wedge\frac{1}{|k|})^{1+\alpha}\partial_y^\alpha\psi_k(y,\lambda)\big\|_{L^\infty([0,1]\backslash[y_\ast-3\delta,y_\ast+3\delta]))}\\
&\lesssim|k|^{5/2}\big\|\omega_{0k}\big\|_{H^3_k(I)}.
\end{split}
\end{equation}

\end{lemma}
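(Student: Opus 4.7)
\medskip

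\noindent\textbf{Proof proposal.} The plan is to reformulate \eqref{BDC2} as an integral equation using the modified Green's function $\mathcal{G}_k(y,z;\lambda+i\iota\epsilon)$, and then invoke the limiting absorption principle (Lemma \ref{DeC7}). Observe that the potential $\frac{b''(y)}{b(y)-\lambda+i\iota\epsilon}$ in \eqref{BDC2} can be split as
\begin{equation*}
\frac{b''(y)}{b(y)-\lambda+i\iota\epsilon} = \frac{b''(y)}{b(y)-\lambda+i\iota\epsilon}\Big[\varphi\bigl(\tfrac{y-y_\ast}{\delta_0}\bigr)-\varphi\bigl(\tfrac{y-y_\ast}{\delta}\bigr)\Big]+\frac{b''(y)}{b(y)-\lambda+i\iota\epsilon}\Big[1-\varphi\bigl(\tfrac{y-y_\ast}{\delta_0}\bigr)+\varphi\bigl(\tfrac{y-y_\ast}{\delta}\bigr)\Big],
\end{equation*}
where the first piece is precisely the potential appearing in the definition \eqref{mGk1} of $\mathcal{G}_k$. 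Convolving \eqref{BDC2} with $\mathcal{G}_k(y,z;\lambda+i\iota\epsilon)$ and recalling the definition \eqref{DeC3} of $T^\ast_k(\lambda+i\iota\epsilon)$, we obtain
\begin{equation*}
(I+T^\ast_k(\lambda+i\iota\epsilon))\phi^\iota_{k,\epsilon}(\cdot,\lambda)=\mathcal{F}^\iota_{k,\epsilon}(\cdot,\lambda),\qquad \mathcal{F}^\iota_{k,\epsilon}(y,\lambda):=\int_0^1 \mathcal{G}_k(y,z;\lambda+i\iota\epsilon)\, g^\iota_{k,\epsilon}(z,\lambda)\,dz.
\end{equation*}

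The next step is to bound $\|\mathcal{F}^\iota_{k,\epsilon}\|_{X_{N,\varrho_k}(I)}$ in terms of $\|\omega_{0k}\|_{H^3_k(I)}$. The source $g^\iota_{k,\epsilon}$ from \eqref{BDC3} splits into a non-singular piece supported in $I\setminus S_{2\delta_0}$, where the factor $\frac{1}{b(z)-\lambda+i\iota\epsilon}$ is harmless, and a compactly-supported piece coming from $(k^2-\partial_y^2)[\Psi\omega_{0k}/b'']$ which costs at most $|k|^2\|\omega_{0k}\|_{H^2_k(I)}$. Using the pointwise and $L^2$ bounds from Corollary \ref{mGk50} on $\mathcal{G}_k$ and the change of exponent from $\varrho^{-2}$ to $\varrho^{-7/4}$ (which absorbs the singularity at $y_\ast$ with room to spare), we obtain $\|\mathcal{F}^\iota_{k,\epsilon}\|_{X_{N,\varrho_k}(I)}\lesssim |k|^{5/2}\|\omega_{0k}\|_{H^3_k(I)}$. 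Applying Lemma \ref{DeC7} in the $X_{N,\varrho_k}$ setting gives \eqref{DCM2}.

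For the bound \eqref{DCM4} on the difference $\psi_k=\lim_{\epsilon\to 0+}(\phi^+_{k,\epsilon}-\phi^-_{k,\epsilon})$, I would subtract the integral equations for $\iota=+$ and $\iota=-$ and pass to the limit $\epsilon\to 0+$. Schematically, $\psi_k$ satisfies
\begin{equation*}
(I+T^\ast_k(\lambda+i0))\psi_k = \bigl[\mathcal{F}^+_{k,0}-\mathcal{F}^-_{k,0}\bigr]-\bigl[T^\ast_k(\lambda+i0)-T^\ast_k(\lambda-i0)\bigr]\phi^-_{k,0},
\end{equation*}
where both differences on the right are concentrated near $y=y_\ast$ through the Plemelj jumps $\frac{1}{b(z)-\lambda\pm i0}=\mathrm{P.V.}\frac{1}{b(z)-\lambda}\mp i\pi\sum_{b(z)=\lambda}\frac{\delta(z-z_j)}{|b'(z_j)|}$. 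The delta contributions are localized at points of distance $\approx \delta$ from $y_\ast$, and a careful accounting of the $\mathcal{G}_k$-weights (using the second bound of Corollary \ref{mGk50}, with the $\varrho^2/\varrho^2$ ratio) yields the natural weight $d_k^{-1}\varrho_k^{1+\alpha}$ appearing in $X_{L,\varrho_k}$ rather than the stronger $\varrho_k^{-7/4+\alpha}$. I would then apply Lemma \ref{DeC7} in the $X_{L,\varrho_k}$ norm to invert $(I+T^\ast_k(\lambda+i0))$ and deduce \eqref{DCM4}.

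The main obstacle is Step~3: controlling the $\epsilon\to 0+$ limit uniformly and extracting the cancellations between $\phi^+_{k,\epsilon}$ and $\phi^-_{k,\epsilon}$. Each individual $\phi^\iota_{k,\epsilon}$ behaves no better than the $X_{N,\varrho_k}$ bound in \eqref{DCM2} allows, but the difference inherits an extra power of $\varrho_k$ from the Plemelj formulas, and it is precisely this gain that is encoded in the weaker weight of $X_{L,\varrho_k}$ and that drives the vorticity depletion rate $|y-y_\ast|^{7/4}$ in Theorem \ref{thm}. The bookkeeping required to separate the delta-type contributions from the principal-value part and to propagate them through the convolution with $\mathcal{G}_k$ (using Lemmas \ref{mGk4} and \ref{mGk30}) is lengthy but in the spirit of the calculations in \cite{JiaL}.
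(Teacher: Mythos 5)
Your proposal follows essentially the same route as the paper: convert \eqref{BDC2} to the integral equation $\phi^\iota_{k,\epsilon}+T^\ast_k(\lambda+i\iota\epsilon)\phi^\iota_{k,\epsilon}=\int_0^1\mathcal{G}_k(\cdot,z;\lambda+i\iota\epsilon)\,g^\iota_{k,\epsilon}(z,\lambda)\,dz$, bound the right side in $X_{N,\varrho_k}$ via the pointwise estimate on $g^\iota_{k,\epsilon}$ and Corollary \ref{mGk50}, and apply Lemma \ref{DeC7} with $X=X_{N,\varrho_k}$ to get \eqref{DCM2}; then, for \eqref{DCM4}, isolate the Plemelj jump localized at the zeros of $b(\cdot)-\lambda$ (which sit at distance $\approx\delta$ from $y_\ast$) and apply Lemma \ref{DeC7} with $X=X_{L,\varrho_k}$. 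The only cosmetic difference is in how the jump is organized: you subtract the two integral equations and encounter $T^\ast_k(\lambda+i0)-T^\ast_k(\lambda-i0)$ acting on $\phi^-_{k}$ on the right, whereas the paper introduces the cutoff $\varphi_\delta$ supported in $S_{2\delta}$ and the auxiliary function $h_{k,\epsilon}$ solving \eqref{DBC6.02}, whose source is exactly $\varphi_\delta$ times the jump of $\frac{b''}{b-\lambda\pm i\epsilon}$ times $\phi^-_{k,\epsilon}$, together with the identification $\psi_k=\lim_{\epsilon\to0+}h_{k,\epsilon}$. The paper's packaging makes explicit the fact (implicit in your schematic) that $\mathcal{G}_k(\cdot,\cdot;\lambda\pm i\epsilon)$ itself has no jump as $\epsilon\to0+$ — the cutoff $\varphi(\cdot/\delta_0)-\varphi(\cdot/\delta)$ in \eqref{mGk1} vanishes at the zeros of $b-\lambda$ — and that $g^+_{k,\epsilon}-g^-_{k,\epsilon}\to0$ since $1-\Psi$ is supported away from $y_\ast$; the underlying analysis and the use of $X_{L,\varrho_k}$ to capture the extra $\varrho_k$-gain from the $\mathcal{G}_k(y,z_j;\lambda)$ factor with $|z_j-y_\ast|\approx\delta$ are identical.
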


\begin{proof}
It follows from \eqref{BDC3} and our assumptions on the initial data $\omega_{0k}$ that we have the bound for $k\in\Z\backslash\{0\}, \iota\in\{\pm\}, 0<\epsilon<\epsilon_0$ and $\lambda\in\Sigma_{\delta_0}$,
\begin{equation}\label{BDC4}
\big\|g^\iota_{k,\epsilon}(y,\lambda)\big\|_{C(I)}\lesssim |k|^{5/2}\|\omega_{0k}\|_{H^3_k(I)}. 
\end{equation}
We can reformulate equation \eqref{BDC2} in the integral form as (recall the definition of $T^\ast(\lambda+i\epsilon)$ from \eqref{DeC3})
\begin{equation}\label{BDC5}
\phi^\iota_{k,\epsilon}(y,\lambda)+T^\ast_k(\lambda+i\iota\epsilon)\phi^\iota_{k,\epsilon}(y,\lambda)=\int_0^1\mathcal{G}_k(y,z;\lambda+i\iota\epsilon)g^\iota_{k,\epsilon}(z,\lambda)dz,
\end{equation}
for $y\in I$. By Lemma \ref{DeC7}, we obtain the bound
\begin{equation}\label{BDC6}
\big\|\phi^\iota_{k,\epsilon}(\cdot,\lambda)\big\|_{X_{N,\varrho_k}(I)}\lesssim\Big\|\int_0^1\mathcal{G}_k(y,z;\lambda+i\iota\epsilon)g^\iota_{k,\epsilon}(z,\lambda)dz\Big\|_{X_{N,\varrho_k}} \lesssim |k|^{5/2}\|\omega_{0k}\|_{H^3_k(I)},
\end{equation}
which, by the definition of the space $X_{N,\varrho_k}$, see \eqref{DeC2}, implies the desired bounds \eqref{DCM2}. 

For applications below on isolating the singularity at $\lambda=b(y)$, we fix $\varphi_\delta(y)\in C_c^\infty(S_{2\delta})$ as
\begin{equation}\label{BDC6.00001}
\varphi_\delta(y):=\varphi(\frac{y}{\delta})\big[1-\varphi(\frac{y}{\delta'})\big],
\end{equation}
for $y\in I$, with $\delta':=\delta/M$ and an $M\gg1$ sufficiently large such that $|b(y)-\lambda|\approx |\lambda-b(y_\ast)|$ for $|y-y_\ast|<\delta/M$. 

To prove \eqref{DCM4}, we note from \eqref{BDC2} that $\phi^+_{k,\epsilon}(y,\lambda)-\phi^-_{k,\epsilon}(y,\lambda)$ satisfies the equation for $y\in I$.
\begin{equation}\label{BDC6.01}
\begin{split}
&(k^2-\partial_y^2)\big[\phi^+_{k,\epsilon}(y,\lambda)-\phi^-_{k,\epsilon}(y,\lambda)\big]+\frac{b''(y)}{b(y)-\lambda+i\epsilon}\big[\phi^+_{k,\epsilon}(y,\lambda)-\phi^-_{k,\epsilon}(y,\lambda)\big]\\
&=g^+_{k,\epsilon}(y,\lambda)-g^-_{k,\epsilon}(y,\lambda)-\Big[\frac{b''(y)}{b(y)-\lambda+i\epsilon}-\frac{b''(y)}{b(y)-\lambda-i\epsilon}\Big]\phi^-_{k,\epsilon}(y,\lambda).
\end{split}
\end{equation}
Denote for $\lambda\in \Sigma_{\delta_0}\backslash\{b(y_\ast)\}$, $\epsilon\in(0,\epsilon_0)$ and $y\in I$ the function $h_{k,\epsilon}(y,\lambda)$ as the solution to 
\begin{equation}\label{DBC6.02}
\begin{split}
&(k^2-\partial_y^2)h_{k,\epsilon}(y,\lambda)+\frac{b''(y)}{b(y)-\lambda+i\epsilon}h_{k,\epsilon}(y,\lambda)=\varphi_\delta(y)\Big[\frac{b''(y)}{b(y)-\lambda-i\epsilon}-\frac{b''(y)}{b(y)-\lambda+i\epsilon}\Big]\phi^-_{k,\epsilon}(y,\lambda),
\end{split}
\end{equation}
with zero Dirichlet boundary condition. 
Then it is clear that for $\lambda\in \Sigma_{\delta_0}\backslash\{b(y_\ast)\}, y\in I,$
\begin{equation}\label{DBC6.03}
\psi_k(y,\lambda)=\lim_{\epsilon\to 0+}h_{k,\epsilon}(y,\lambda). 
\end{equation}
We can reformulate \eqref{DBC6.02} as the following integral equation for $\lambda\in \Sigma_{\delta_0}\backslash\{b(y_\ast)\}, y\in I,$
\begin{equation}\label{DBC6.04}
\begin{split}
&h_{k,\epsilon}(y,\lambda)+T^\ast_k(\lambda+i\epsilon)h_{k,\epsilon}(y,\lambda)\\
&=-\int_0^1\mathcal{G}_k(y,z;\lambda+i\epsilon)\varphi_\delta(z)\Big[\frac{b''(z)}{b(z)-\lambda+i\epsilon}-\frac{b''(z)}{b(z)-\lambda-i\epsilon}\Big]\phi^-_{k,\epsilon}(z,\lambda)\,dz.
\end{split}
\end{equation}
It follows from the bound \eqref{DCM2} that for $|\epsilon|\lesssim (\delta\wedge \frac{1}{|k|})^4$,
\begin{equation}\label{DBC6.05}
\bigg\|\int_0^1\mathcal{G}_k(y,z;\lambda+i\epsilon)\varphi_\delta(z)\Big[\frac{b''(z)}{b(z)-\lambda+i\epsilon}-\frac{b''(z)}{b(z)-\lambda-i\epsilon}\Big]\phi^-_{k,\epsilon}(z,\lambda)\,dz\bigg\|_{X_{L,\varrho_k}}\lesssim (\delta\wedge \frac{1}{|k|})^{7/4}. 
\end{equation}
The desired bound \eqref{DCM4} then follows from Lemma \ref{DeC7} with $X=X_{L, \varrho_k}$.

\end{proof}

To obtain higher order regularity bounds (in $\lambda$) of $\phi^\iota_{k,\epsilon}(\cdot,\lambda)$, we take the derivative $\partial_\lambda$ in \eqref{BDC2}. It follows that
$\partial_\lambda\phi^\iota_{k,\epsilon}(y,\lambda)$
satisfies for $y\in I$,
\begin{equation}\label{BDC8}
\begin{split}
&\Big[k^2-\partial_y^2+\frac{b''(y)}{b(y)-\lambda+i\iota\epsilon}\Big]\partial_\lambda\phi^\iota_{k,\epsilon}(y,\lambda)=-\frac{b''(y)}{(b(y)-\lambda+i\iota\epsilon)^2}\phi^\iota_{k,\epsilon}(y,\lambda)+\partial_\lambda g^\iota_{k,\epsilon}(y,\lambda),
\end{split}
\end{equation}
with zero Dirichlet boundary condition.

Recall the definition of $\varphi_\delta$ from \eqref{BDC6.00001}. We have the following bounds on $\partial_\lambda\phi^\iota_{k,\epsilon}(y,\lambda)$. 
\begin{lemma}\label{DCM5}
Assume that $k\in\Z\backslash\{0\}, \lambda\in \Sigma_{\delta_0}\backslash\{b(y_\ast)\}$. Let $\psi_{k,\epsilon}^{\iota}(y,\lambda)$ and $\phi_{k,\epsilon}^{\iota}(y,\lambda)$ with $\iota\in\{\pm\}, 0<\epsilon<\min\{|\lambda-b(y_\ast)|,\epsilon_0\}$ be as defined in \eqref{F7} and \eqref{BDC1} respectively. Recall from \eqref{mGk3} that
\begin{equation}\label{DCM6}
\delta:=\delta(\lambda)=8\sqrt{|\lambda-b(y_\ast)|/b''(y_\ast)}.
\end{equation}
Denote for $y\in [0,1], \iota\in\{\pm\}, \lambda\in \Sigma_{\delta_0}\backslash\{b(y_\ast)\}, 0<\epsilon<\min\{|\lambda-b(y_\ast)|,\epsilon_0\}$,
\begin{equation}\label{DCM7.0}
\begin{split}
&\Lambda^\iota_{1,\epsilon}(y,\lambda):=\,\phi^\iota_{k,\epsilon}(y,\lambda)\varphi_\delta(y)\frac{b''(y)}{(b'(y))^2}\log\frac{b(y)-\lambda+i\iota\epsilon}{\delta^2},\\
&\Lambda_1(y,\lambda):=\,\psi_k(y,\lambda)\varphi_\delta(y)\frac{b''(y)}{(b'(y))^2}\log\frac{b(y)-\lambda}{\delta^2}.
\end{split}
\end{equation}

 We have the bounds for $0<\epsilon<\min\{|\lambda-b(y_\ast)|,\epsilon_0\}, \iota\in\{\pm\}$, and $\lambda\in \Sigma_{\delta_0}$ that 
\begin{equation}\label{DCM7}
\begin{split}
&\sum_{\alpha\in\{0,1\}}\Big\|(\delta\wedge |k|^{-1})^{1/4+\alpha}\partial_y^{\alpha}\Big[\partial_\lambda\phi^\iota_{k,\epsilon}(y,\lambda)-\Lambda^\iota_{1,\epsilon}(y,\lambda)\Big]\Big\|_{L^2([y_\ast-3\delta,y_\ast+3\delta])}\delta^{-1/2}\\
&+\sum_{\alpha\in\{0,1\}}\Big\|(\delta\wedge |k|^{-1})^2(|y-y_\ast|\wedge\frac{1}{|k|})^{-7/4+\alpha}\partial_y^\alpha\partial_\lambda\phi^\iota_{k,\epsilon}(y,\lambda)\Big\|_{L^\infty([0,1]\backslash[y_\ast-3\delta,y_\ast+3\delta]))}\\
&\lesssim|k|^{5/2}\big\|\omega_{0k}\big\|_{H^3_k(I)}.
\end{split}
\end{equation}
In addition, we have the bounds for $\lambda \in  \Sigma_{\delta_0}\backslash\{b(y_\ast)\}$ and $k\in\Z\backslash\{0\}$,
\begin{equation}\label{DCM9}
\begin{split}
&\sum_{\alpha\in\{0,1\}}\big\|(\delta\wedge |k|^{-1})^{1/4+\alpha}\partial_y^{\alpha}\Big[\partial_\lambda\psi_k(y,\lambda)-\Lambda_1(y,\lambda)\Big]\Big\|_{L^2([y_\ast-3\delta,y_\ast+3\delta])}\delta^{-1/2}\\
&+\sum_{\alpha\in\{0,1\}}\big\|(\delta\wedge |k|^{-1})^{-3/4}(|y-y_\ast|\wedge\frac{1}{|k|})^{1+\alpha}\partial_y^\alpha\partial_\lambda\psi_k(y,\lambda)\big\|_{L^\infty([0,1]\backslash[y_\ast-3\delta,y_\ast+3\delta]))}\\
&\lesssim|k|^{5/2}\big\|\omega_{0k}\big\|_{H^3_k(I)}.
\end{split}
\end{equation}

\end{lemma}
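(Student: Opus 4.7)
The plan is to imitate Lemma~\ref{DCM1}, now applied to the regularized remainder $\widetilde\phi^\iota(y,\lambda) := \partial_\lambda\phi^\iota_{k,\epsilon}(y,\lambda) - \Lambda^\iota_{1,\epsilon}(y,\lambda)$ rather than to $\phi^\iota_{k,\epsilon}$ itself. Differentiating \eqref{BDC2} in $\lambda$ gives \eqref{BDC8}, whose right-hand side carries the obstructive quadratic singularity $-b''(y)\phi^\iota_{k,\epsilon}/(b(y)-\lambda+i\iota\epsilon)^2$; integrated against $\mathcal{G}_k$ this would land outside any weighted space compatible with Lemma~\ref{DeC7}, so a leading-order ansatz must first be extracted. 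The shape of $\Lambda^\iota_{1,\epsilon}$ in \eqref{DCM7.0} is dictated by the identity
\begin{equation*}
\partial_y^2\log(b(y)-\lambda+i\iota\epsilon) = \frac{b''(y)}{b(y)-\lambda+i\iota\epsilon} - \frac{(b'(y))^2}{(b(y)-\lambda+i\iota\epsilon)^2},
\end{equation*}
which makes $(k^2-\partial_y^2+V)\Lambda^\iota_{1,\epsilon}$ produce a term of the form $\pm\phi^\iota_{k,\epsilon}\varphi_\delta b''/(b-\lambda+i\iota\epsilon)^2$ designed to cancel the quadratic singularity of \eqref{BDC8} on the support of $\varphi_\delta$. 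Consequently $\widetilde\phi^\iota$ solves $(k^2-\partial_y^2+V)\widetilde\phi^\iota = \mathcal{R}^\iota_{k,\epsilon}$, where the residual source $\mathcal{R}^\iota_{k,\epsilon}$ collects (a) the leftover quadratic-pole term $-(1-\varphi_\delta)b''\phi^\iota_{k,\epsilon}/(b-\lambda+i\iota\epsilon)^2$, supported either in $|y-y_\ast|<\delta'$ (where $|b-\lambda|\approx\delta^2$) or in $|y-y_\ast|>3\delta/2$ (where $|b-\lambda|\gtrsim (y-y_\ast)^2$); (b) at-most simple-pole-times-log contributions that appear when derivatives land on $\varphi_\delta\,b''/(b')^2$ or through the cross term $V\Lambda^\iota_{1,\epsilon}$; and (c) the regular source $\partial_\lambda g^\iota_{k,\epsilon}$.

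Next, I would reformulate $\widetilde\phi^\iota$ through the modified Green's function,
\begin{equation*}
(I+T^\ast_k(\lambda+i\iota\epsilon))\widetilde\phi^\iota(y,\lambda) = \int_0^1\mathcal{G}_k(y,z;\lambda+i\iota\epsilon)\,\mathcal{R}^\iota_{k,\epsilon}(z,\lambda)\,dz,
\end{equation*}
invoke Lemma~\ref{DeC7} with $X = X_{N,\varrho_k}$, and thereby reduce the bound \eqref{DCM7} to the weighted estimate $\|\int_0^1\mathcal{G}_k\,\mathcal{R}^\iota_{k,\epsilon}\|_{X_{N,\varrho_k}(I)}\lesssim d_k^{-2}\,|k|^{5/2}\|\omega_{0k}\|_{H^3_k(I)}$. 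The factor $d_k^{-2}$ is exactly the gap between the $d_k^{-7/4+\alpha}$ weight built into $X_{N,\varrho_k}$ and the $d_k^{1/4+\alpha}$ weight demanded by \eqref{DCM7}. The individual terms in $\mathcal{R}^\iota_{k,\epsilon}$ are then controlled using the pointwise bound $|\phi^\iota_{k,\epsilon}(y,\lambda)|\lesssim d_k^{7/4}|k|^{5/2}\|\omega_{0k}\|_{H^3_k(I)}$ derivable from Lemma~\ref{DCM1}, the decay estimates for $\mathcal{G}_k$ in Corollary~\ref{mGk50}, and the integration-by-parts device \eqref{DeC6.2}--\eqref{DeC6.3} which trades each $1/(b-\lambda+i\iota\epsilon)$ factor for a logarithm at the cost of a $y$-derivative. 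For the second bound \eqref{DCM9} one takes the $\epsilon\to 0+$ limit of the difference of the two $\iota=\pm$ integral equations: the factor $(b-\lambda+i\epsilon)^{-1}-(b-\lambda-i\epsilon)^{-1}$ limits by Plemelj to $-2\pi i$ times a Dirac mass concentrated on $\{b(y)=\lambda\}$, restricting the induced source to the critical layer $\{|y-y_\ast|\approx\delta\}$. Applying Lemma~\ref{DeC7} in the alternative space $X_{L,\varrho_k}$ rather than $X_{N,\varrho_k}$, exactly as in the derivation of \eqref{DCM4} from \eqref{DBC6.04}--\eqref{DBC6.05}, then yields the improved outside weight $(\delta\wedge|k|^{-1})^{-3/4}$ appearing in \eqref{DCM9}.

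The main obstacle I expect is the threshold bookkeeping of the residual (a) in the innermost region $|y-y_\ast|<\delta'$. Its modified-Green's-function integral has pointwise size of order $d_k\cdot (d_k^{7/4}/\delta^4)\cdot\delta'\approx d_k^{11/4}/\delta^3$, which matches the permitted bound $d_k^{-1/4}\delta^{1/2}|k|^{5/2}\|\omega_{0k}\|_{H^3_k(I)}$ in $L^2(S_{3\delta})$ only by virtue of the inequality $d_k\leq\delta$. Analogous tightness occurs for the terms coming from $\partial_y\varphi_\delta$ and $\partial_y^2\varphi_\delta$ concentrated near $|y-y_\ast|\approx\delta'$ and $|y-y_\ast|\approx 2\delta$, and a further integration by parts is required in the outside region to reconcile the $\varrho_k^{-1-\alpha}$-type integrals with the $\varrho_k^{-7/4+\alpha}$ norm built into $X_{N,\varrho_k}$. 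The overall bookkeeping parallels that of \cite{JiaL}, but is noticeably more delicate here owing to the two-regime structure of the weights in \eqref{DeC2}--\eqref{DeC2.0}.
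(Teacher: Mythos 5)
Your proposal is correct and uses the same essential toolkit as the paper---the modified Green's function $\mathcal{G}_k$, the integral-equation reformulation via $T^\ast_k$, the two weighted spaces $X_{N,\varrho_k}$ and $X_{L,\varrho_k}$ together with the limiting absorption principle of Lemma~\ref{DeC7}---but it packages the argument differently. You subtract the ansatz $\Lambda^\iota_{1,\epsilon}$ at the PDE level using the identity $\partial_y^2\log(b(y)-\lambda+i\iota\epsilon)=\frac{b''(y)}{b(y)-\lambda+i\iota\epsilon}-\frac{(b'(y))^2}{(b(y)-\lambda+i\iota\epsilon)^2}$ to cancel the quadratic pole on ${\rm supp}\,\varphi_\delta$, and then run the LAP on the single remainder $\widetilde\phi^\iota=\partial_\lambda\phi^\iota_{k,\epsilon}-\Lambda^\iota_{1,\epsilon}$. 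The paper instead writes $\partial_\lambda\phi^\iota_{k,\epsilon}=\phi^\iota_{k,\epsilon}(y,\lambda;1)+\int_0^1\mathcal{G}_k(y,z;\lambda+i\iota\epsilon)\big[-b''\phi^\iota_{k,\epsilon}/(b-\lambda+i\iota\epsilon)^2+\partial_\lambda g^\iota_{k,\epsilon}\big]\,dz$ (equation \eqref{BDC10}), bounds $\phi^\iota(;1)$ in $X_{N,\varrho_k}$ via the LAP applied to its integral equation \eqref{BDC12}, and then identifies $\Lambda^\iota_{1,\epsilon}$ as the leading output of the explicit Green's function integral via an integration by parts patterned on \eqref{DeC6.2}--\eqref{DeC6.3} (this last step is left implicit, which is why the paper writes that \eqref{DCM7} ``follows from \eqref{BDC12.04} and \eqref{BDC10}''). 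The two decompositions require checking the same estimates; yours exposes the cancellation more explicitly and bounds the object $\partial_\lambda\phi^\iota-\Lambda^\iota_{1,\epsilon}$ directly, while the paper's $\phi^\iota(;1)$ differs from $\widetilde\phi^\iota$ by the controlled quantity $\int\mathcal{G}_k[\cdot]-\Lambda^\iota_{1,\epsilon}$.

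Two small cautions. First, your computation of the PDE for $\widetilde\phi^\iota$ also produces terms where $\partial_y^2$ lands on $\phi^\iota_{k,\epsilon}$ inside $\Lambda^\iota_{1,\epsilon}$; these must be eliminated using equation \eqref{BDC2}, which reintroduces a factor $b''\phi^\iota/(b-\lambda+i\iota\epsilon)$ multiplied by $\varphi_\delta\,b''/(b')^2\cdot\log(\cdot)$---still a simple-pole-times-log singularity on the inner edge of ${\rm supp}\,\varphi_\delta$ requiring the same integration-by-parts device you invoke, so no obstruction, but it must be said. Second, your derivation of \eqref{DCM9} via Plemelj is a heuristic shortcut: the limit of $(b-\lambda+i\epsilon)^{-2}-(b-\lambda-i\epsilon)^{-2}$ is a derivative of a Dirac mass, not a Dirac mass, and taking that limit directly inside the integral equation is delicate. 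The paper avoids distributions by first defining, while $\epsilon>0$, the auxiliary object $D\phi_{k,\epsilon}$ as the solution of equation \eqref{BDC12.06} with sources pre-localized by $\varphi_\delta$, proving the required $X_{L,\varrho_k}$ bound uniformly in $\epsilon$, and only then observing $\partial_\lambda\psi_k=\lim_{\epsilon\to0+}D\phi_{k,\epsilon}$ (\eqref{BDC12.061}). That is the rigorous substitute for what you describe, and it is the mechanism underlying the improved outside weight $(\delta\wedge|k|^{-1})^{-3/4}$ that you correctly attribute to the $X_{L,\varrho_k}$ space.
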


\begin{proof}
Define for $k\in\Z\backslash\{0\}, \iota\in\{\pm\}, \lambda\in \Sigma_{\delta_0}\backslash\{b(y_\ast)\}, 0<\epsilon<\min\{|\lambda-b(y_\ast)|,\epsilon_0\}, y\in I$,
\begin{equation}\label{BDC10}
\partial_\lambda\phi^\iota_{k,\epsilon}(y,\lambda):=\phi^\iota_{k,\epsilon}(y,\lambda;1)+\int_0^1\mathcal{G}_k(y,z;\lambda+i\iota\epsilon)\Big[\frac{-b''(z)}{(b(z)-\lambda+i\iota\epsilon)^2}\phi^\iota_{k,\epsilon}(z,\lambda)+\partial_\lambda g^\iota_{k,\epsilon}(z,\lambda)\Big]\,dz.
\end{equation}
It follows from \eqref{BDC8} that $\phi^\iota_{k,\epsilon}(y,\lambda;1)$ satisfies for $y\in I$,
\begin{equation}\label{BDC12}
\begin{split}
&\phi^\iota_{k,\epsilon}(y,\lambda;1)+T^\ast_k(\lambda+i\iota\epsilon)\phi^\iota_{k,\epsilon}(y,\lambda;1)\\
&=-T^\ast_k(\lambda+i\iota\epsilon)\int_0^1\mathcal{G}_k(y,z;\lambda+i\iota\epsilon)\Big[-\frac{b''(z)}{(b(z)-\lambda+i\iota\epsilon)^2}\phi^\iota_{k,\epsilon}(z,\lambda)+\partial_\lambda g^\iota_{k,\epsilon}(z,\lambda)\Big]\,dz.
\end{split}
\end{equation}
Denote for $k\in\Z\backslash\{0\}, \iota\in\{\pm\}, \lambda\in \Sigma_{\delta_0}\backslash\{b(y_\ast)\}, 0<\epsilon<\min\{|\lambda-b(y_\ast)|,\epsilon_0\}, z\in I$,
\begin{equation}\label{BDC12.01}
\begin{split}
h_{k,\epsilon}^\iota(z,\lambda;1):=&\frac{b''(z)}{(b(z)-\lambda+i\iota\epsilon)^2}\varphi_\delta(z)\phi^\iota_{k,\epsilon}(z,\lambda),\\
h_{k,\epsilon}^\iota(z,\lambda;2):=&\frac{b''(z)}{(b(z)-\lambda+i\iota\epsilon)^2}(1-\varphi_\delta(z))\phi^\iota_{k,\epsilon}(z,\lambda),\quad h_{k,\epsilon}^\iota(z,\lambda;3):=\partial_\lambda g^\iota_{k,\epsilon}(z,\lambda).
\end{split}
\end{equation}
It follows from the bound \eqref{DCM2} and Lemma \ref{mGk4} that for $j\in\{2,3\}$
\begin{equation}\label{BDC12.02}
\big\|T^\ast_k(\lambda+i\iota\epsilon)\int_0^1\mathcal{G}_k(y,z;\lambda+i\iota\epsilon)h_{k,\epsilon}^\iota(z,\lambda;j)\,dz\big\|_{X_{N,\varrho_k}}\lesssim (\delta\wedge |k|^{-1})^{-2}|k|^{5/2}\|\omega_{0k}\|_{H^3_k(I)}.
\end{equation}
Using integration by parts argument similar to \eqref{DeC6.2}-\eqref{DeC6.3}, we have also 
\begin{equation}\label{BDC12.03}
\begin{split}
&\bigg\|T^\ast_k(\lambda+i\iota\epsilon)\int_0^1\mathcal{G}_k(y,z;\lambda+i\iota\epsilon)h_{k,\epsilon}^\iota(z,\lambda;1)\,dz\bigg\|_{X_{N,\varrho_k}}\lesssim (\delta\wedge |k|^{-1})^{-2}|k|^{5/2}\big\|\omega_{0k}\big\|_{H^3_k(I)}.
\end{split}
\end{equation}
It follows from \eqref{BDC12.02}-\eqref{BDC12.03} and Lemma \ref{DeC7} that for $\lambda\backslash\{b(y_\ast)\}$,
\begin{equation}\label{BDC12.04}
\big\|\phi^\iota_{k,\epsilon}(y,\lambda;1)\big\|_{X_{N,\varrho_k}}\lesssim (\delta\wedge |k|^{-1})^{-2}|k|^{5/2}\big\|\omega_{0k}\big\|_{H^3_k(I)}.
\end{equation}
The desired bound \eqref{DCM7} follows, as a consequence of \eqref{BDC12.04} and \eqref{BDC10}. 

Using \eqref{BDC8}, we get that for $y\in I$, 
\begin{equation}\label{BDC12.05}
\begin{split}
&\Big[k^2-\partial_y^2+\frac{b''(y)}{b(y)-\lambda+i\epsilon}\Big]\big[\partial_\lambda\phi^+_{k,\epsilon}(y,\lambda)-\partial_\lambda\phi^-_{k,\epsilon}(y,\lambda)\big]\\
&=-\bigg[\frac{b''(y)}{(b(y)-\lambda+i\epsilon)^2}\phi^+_{k,\epsilon}(y,\lambda)-\frac{b''(y)}{(b(y)-\lambda-i\epsilon)^2}\phi^-_{k,\epsilon}(y,\lambda)\bigg]+\big[\partial_\lambda g^+_{k,\epsilon}(y,\lambda)-\partial_\lambda g^-_{k,\epsilon}(y,\lambda)\big]\\
&\quad-\bigg[\frac{b''(y)}{b(y)-\lambda+i\epsilon}-\frac{b''(y)}{b(y)-\lambda-i\epsilon}\bigg]\partial_\lambda\phi^-_{k,\epsilon}(y,\lambda),
\end{split}
\end{equation}
with zero Dirichlet boundary condition. 

Denoting for $\lambda\in\Sigma_{\delta_0}\backslash\{b(y_\ast)\}$ and $y\in I$, $D\phi_{k,\epsilon}(y,\lambda)$ as the solution to 
\begin{equation}\label{BDC12.06}
\begin{split}
&\Big[k^2-\partial_y^2+\frac{b''(y)}{b(y)-\lambda+i\iota\epsilon}\Big]D\phi_{k,\epsilon}(y,\lambda)\\
&=-\varphi_\delta(y)\bigg[\frac{b''(y)}{(b(y)-\lambda+i\epsilon)^2}\phi^+_{k,\epsilon}(y,\lambda)-\frac{b''(y)}{(b(y)-\lambda-i\epsilon)^2}\phi^-_{k,\epsilon}(y,\lambda)\bigg]\\
&\quad-\varphi_\delta(y)\bigg[\frac{b''(y)}{b(y)-\lambda+i\iota\epsilon}-\frac{b''(y)}{b(y)-\lambda-i\iota\epsilon}\bigg]\partial_\lambda\phi^-_{k,\epsilon}(y,\lambda),
\end{split}
\end{equation}
for $y\in I$ with zero Dirichlet boundary condition.

We notice the identity that for $y\in I, \lambda\in\Sigma_{\delta_0}\backslash\{b(y_\ast)\}$,
\begin{equation}\label{BDC12.061}
\partial_\lambda\psi_k(y,\lambda)=\lim_{\epsilon\to 0+}D\phi_{k,\epsilon}(y,\lambda). 
\end{equation}
We can reformulate \eqref{BDC12.06} as the integral equation for $y\in I$,
\begin{equation}\label{BDC12.07}
\begin{split}
&D\phi_{k,\epsilon}(y,\lambda)+T^\ast_k(\lambda+i\epsilon)D\phi_{k,\epsilon}(y,\lambda)\\
&=-\int_0^1\mathcal{G}_k(y,z;\lambda+i\epsilon)\varphi_\delta(z)\bigg[\frac{b''(z)}{(b(z)-\lambda+i\epsilon)^2}\phi^+_{k,\epsilon}(z,\lambda)-\frac{b''(z)}{(b(z)-\lambda-i\epsilon)^2}\phi^-_{k,\epsilon}(z,\lambda)\bigg]\,dz\\
&\quad-\int_0^1\mathcal{G}_k(y,z;\lambda+i\epsilon)\varphi_\delta(z)\bigg[\frac{b''(z)}{b(z)-\lambda+i\epsilon}-\frac{b''(z)}{b(z)-\lambda-i\iota\epsilon}\bigg]\partial_\lambda\phi^-_{k,\epsilon}(z,\lambda)\,dz\\
&:=R_{k,\epsilon}(y,\lambda).
\end{split}
\end{equation}
We can write for $y\in I, \lambda\in\Sigma_{\delta_0}\backslash\{b(y_\ast)\}, 0<\epsilon<\min\{|\lambda-b(y_\ast)|,\epsilon_0\}$,
\begin{equation}\label{BDC30.1}
D\phi_{k,\epsilon}(y,\lambda):=R_{k,\epsilon}(y,\lambda)+D\phi_{k,\epsilon}(y,\lambda;1).
\end{equation}
Then $D\phi_{k,\epsilon}(y,\lambda;1)$ satisfies for $y\in I, \lambda\in\Sigma_{\delta_0}\backslash\{b(y_\ast)\}, 0<\epsilon<\min\{|\lambda-b(y_\ast)|,\epsilon_0\}$, the equation
\begin{equation}\label{BDC30.2}
D\phi_{k,\epsilon}(y,\lambda;1)+T_k^\ast(\lambda+i\epsilon)D\phi_{k,\epsilon}(y,\lambda;1)=-T_k^\ast(\lambda+i\epsilon)R_{k,\epsilon}(y,\lambda).
\end{equation}
The desired bounds \eqref{DCM9} follow from \eqref{BDC12.07}-\eqref{BDC30.2}, and Lemma \ref{mGk30} with $X=X_{L,\varrho_k}$. 
\end{proof}

Lastly we turn to the highest order derivative $\partial_\lambda^2\psi_{k,\epsilon}^\iota(y,\lambda)$ that we need to control. To study $\partial_\lambda^2\psi_{k,\epsilon}^\iota(y,\lambda)$, we take the derivative $\partial_\lambda$ in \eqref{BDC8} and obtain that
\begin{equation}\label{BDC8.00}
\begin{split}
\Big[k^2-\partial_y^2+\frac{b''(y)}{b(y)-\lambda+i\iota\epsilon}\Big]\partial_\lambda^2\phi^\iota_{k,\epsilon}(\cdot,\lambda)=&-\frac{2b''(y)}{(b(y)-\lambda+i\iota\epsilon)^2}\partial_\lambda\phi^\iota_{k,\epsilon}(\cdot,\lambda)\\
&-\frac{2b''(y)}{(b(y)-\lambda+i\iota\epsilon)^3}\phi^\iota_{k,\epsilon}(y,\lambda)+\partial^2_\lambda g^\iota_{k,\epsilon}(y,\lambda).
\end{split}
\end{equation}

\begin{lemma}\label{DCM100}
Assume that $k\in\Z\backslash\{0\}, \lambda\in \Lambda_{\delta_0}\backslash\{b(y_\ast)\}$ and let $\phi_{k,\epsilon}^{\iota}(y,\lambda)$ with $\iota\in\{\pm\}, 0<\epsilon<\min\{|\lambda-b(y_\ast)|,\epsilon_0\}$ be as defined in \eqref{BDC2}. Recall that
\begin{equation}\label{DCM11}
\delta:=\delta(\lambda)=8\sqrt{|\lambda-b(y_\ast)|/b''(y_\ast)}.
\end{equation}
Denoting for $y\in [0,1], \lambda\in \Lambda_{\delta_0}\backslash\{b(y_\ast)\}$,
\begin{equation}\label{DCM7.1}
\begin{split}
\Lambda_2(y,\lambda):=&-\,\psi_k(y,\lambda)\varphi_\delta(y)\frac{b''(y)}{(b'(y))^2}\lim_{\epsilon\to0+}\frac{1}{b(y)-\lambda+i\epsilon}\\
&-\varphi_\delta(y)\frac{b''(y)}{(b'(y))^2}\lim_{\epsilon\to0+}\Big[\frac{1}{b(y)-\lambda+i\epsilon}-\frac{1}{b(y)-\lambda-i\epsilon}\Big]\phi^-_{k,\epsilon}(y,\lambda),
\end{split}
\end{equation}
then we have the bounds for $\lambda \in   \Lambda_{\delta_0}\backslash\{b(y_\ast)\}$,
\begin{equation}\label{DCM9}
\begin{split}
&\sum_{\alpha\in\{0,1\}}\Big\|(\delta\wedge |k|^{-1})^{9/4}\Big[\partial_\lambda^2\psi_k(y,\lambda)-\Lambda_2(y,\lambda)\Big]\Big\|_{L^2([y_\ast-3\delta,y_\ast+3\delta])}\delta^{-1/2}\\
&+\sum_{\alpha\in\{0,1\}}\Big\|(\delta\wedge |k|^{-1})^{5/4}(|y-y_\ast|\wedge\frac{1}{|k|})\partial_\lambda^2\psi_k(y,\lambda)\Big\|_{L^\infty([0,1]\backslash[y_\ast-3\delta,y_\ast+3\delta]))}\lesssim|k|^{5/2}\big\|\omega_{0k}\big\|_{H^3_k(I)}.
\end{split}
\end{equation}

\end{lemma}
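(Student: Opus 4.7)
The plan is to follow the same template as in Lemma~\ref{DCM5}: reformulate \eqref{BDC8.00} as an integral equation against the modified Green's function $\mathcal{G}_k$, identify the principal singular part, and then invoke the limiting absorption principle Lemma~\ref{DeC7} in the weighted space $X_{L,\varrho_k}$. First, subtract the $\iota=-$ equation from the $\iota=+$ version of \eqref{BDC8.00} to produce an equation for $\partial_\lambda^2[\phi^+_{k,\epsilon}-\phi^-_{k,\epsilon}]$; its right-hand side contains a double-pole term $-\frac{2b''(y)}{(b(y)-\lambda+i\iota\epsilon)^2}\partial_\lambda\phi^\iota_{k,\epsilon}$, a triple-pole term $-\frac{2b''(y)}{(b(y)-\lambda+i\iota\epsilon)^3}\phi^\iota_{k,\epsilon}$, the smooth forcing $\partial_\lambda^2 g^\iota_{k,\epsilon}$, and a commutator contribution $\bigl[\frac{b''(y)}{b(y)-\lambda+i\epsilon}-\frac{b''(y)}{b(y)-\lambda-i\epsilon}\bigr]\partial_\lambda^2\phi^-_{k,\epsilon}$ arising from moving the singular first-order coefficient on the left across the $\iota$-difference.

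Next, localize each singular source by multiplying with $\varphi_\delta$ and $1-\varphi_\delta$. The outer pieces, where $(b-\lambda+i\iota\epsilon)^{-1}$ is bounded by $\delta^{-2}$, are handled directly using Corollary~\ref{mGk50} together with the bounds of Lemmas~\ref{DCM1} and~\ref{DCM5}. For the inner pieces (with $\varphi_\delta$), I would integrate by parts in $z$ against $\mathcal{G}_k(y,z;\lambda+i\iota\epsilon)$, using the identity $\partial_z\log(b(z)-\lambda+i\iota\epsilon)=b'(z)/(b(z)-\lambda+i\iota\epsilon)$ exactly as in \eqref{DeC6.2}--\eqref{DeC6.3}. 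One application converts a triple pole into a double pole times a $\log$; a second application reduces it further to a controllable kernel plus explicit boundary terms. Taking the $\iota$-difference and sending $\epsilon\to 0^+$, the surviving boundary contributions assemble precisely into the two summands of $\Lambda_2$: the first summand encodes $\partial_\lambda$ landing on the coefficient $b''/(b-\lambda+i\iota\epsilon)$ multiplying $\psi_k$, while the Sokhotski-type jump $(b-\lambda+i0^+)^{-1}-(b-\lambda-i0^+)^{-1}$ in the second summand originates in the commutator piece identified above.

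With this splitting in hand, write $\partial_\lambda^2[\phi^+_{k,\epsilon}-\phi^-_{k,\epsilon}] = R_{k,\epsilon} + \mathcal{R}^\sharp_{k,\epsilon}$, where $R_{k,\epsilon}$ collects the explicit integrals from the previous step (converging to $\Lambda_2$ plus an $X_{L,\varrho_k}$-bounded remainder) and $\mathcal{R}^\sharp_{k,\epsilon}$ solves $(I+T^\ast_k(\lambda+i\epsilon))\mathcal{R}^\sharp_{k,\epsilon} = -T^\ast_k(\lambda+i\epsilon)R_{k,\epsilon}$, mirroring \eqref{BDC30.2}. Lemma~\ref{DeC7} in the space $X_{L,\varrho_k}$ then bounds $\mathcal{R}^\sharp_{k,\epsilon}$ in terms of $R_{k,\epsilon}$, producing the extra two powers of $(\delta\wedge|k|^{-1})^{-1}$ beyond Lemma~\ref{DCM5} that are reflected in the weights of \eqref{DCM9}; passing to the limit $\epsilon\to 0^+$ closes the argument. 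The main obstacle is the bookkeeping: two $\lambda$-derivatives act on three distinct singular factors of the form $(b-\lambda+i\iota\epsilon)^{-j}$, $j\in\{1,2,3\}$, and every integration by parts produces both an interior and a boundary contribution, so one must verify carefully that all non-$L^2$-integrable $\lambda$-singularities combine exactly into the compact form \eqref{DCM7.1} with no leftover pieces---in particular, recognizing that the jump structure in the second summand of $\Lambda_2$ comes from the operator coefficient rather than from the data is the most delicate step.
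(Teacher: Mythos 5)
Your overall strategy matches the paper's: take the $\iota$-difference of \eqref{BDC8.00}, localize the singular sources by $\varphi_\delta$ and $1-\varphi_\delta$, integrate by parts twice following \eqref{DeC6.2}--\eqref{DeC6.3} to reduce the pole order, extract the leading singular boundary term, and close via Lemma~\ref{DeC7} in $X_{L,\varrho_k}$. You are also right that the subtraction of the $\iota=-$ and $\iota=+$ equations generates a genuine commutator term $\bigl[\tfrac{b''}{b-\lambda+i\epsilon}-\tfrac{b''}{b-\lambda-i\epsilon}\bigr]\partial_\lambda^2\phi^-_{k,\epsilon}$, which the paper's displayed equations \eqref{DCM16}--\eqref{DCM17} do not show explicitly (compare with the analogous and more careful treatment in \eqref{BDC12.05}--\eqref{BDC12.06}); making that term visible is a point in your favor. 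However, there is a genuine conceptual error in your attribution of the second summand of $\Lambda_2$. That commutator involves the unknown $\partial_\lambda^2\phi^-_{k,\epsilon}$ itself; it cannot be extracted into an explicit correction, and instead must be fed back into the fixed-point argument (exactly as the paper does for the analogous term involving $\partial_\lambda\phi^-$ in Lemma~\ref{DCM5}). The jump in $\Lambda_2$ has a completely different origin. After the double integration by parts, the $\delta(y-z)$ coming from $\partial_z^2\mathcal{G}_k$ produces the boundary term $-\varphi_\delta\tfrac{b''}{(b')^2}\tfrac{\phi^\iota_{k,\epsilon}}{b-\lambda+i\iota\epsilon}$, and its $\iota$-difference is rewritten algebraically as
\begin{equation*}
-\varphi_\delta\frac{b''}{(b')^2}\Bigl[\frac{\phi^+_{k,\epsilon}}{b-\lambda+i\epsilon}-\frac{\phi^-_{k,\epsilon}}{b-\lambda-i\epsilon}\Bigr]
=-\varphi_\delta\frac{b''}{(b')^2}\frac{\phi^+_{k,\epsilon}-\phi^-_{k,\epsilon}}{b-\lambda+i\epsilon}
-\varphi_\delta\frac{b''}{(b')^2}\Bigl[\frac{1}{b-\lambda+i\epsilon}-\frac{1}{b-\lambda-i\epsilon}\Bigr]\phi^-_{k,\epsilon},
\end{equation*}
which is precisely the two summands of \eqref{DCM7.1} in the limit $\epsilon\to 0^+$. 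In other words, the Sokhotski-type jump in $\Lambda_2$ comes from the \emph{source} (specifically from $h^\iota_{k,\epsilon}(\cdot;5)$, which carries the known lower-order data $\phi^\pm_{k,\epsilon}$), not from the operator coefficient; note that the second summand of $\Lambda_2$ involves $\phi^-_{k,\epsilon}$, which is incompatible with it coming from a commutator that multiplies $\partial_\lambda^2\phi^-_{k,\epsilon}$.

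A secondary, technical concern is your fixed-point decomposition. You propose the one-step split $\partial_\lambda^2[\phi^+_{k,\epsilon}-\phi^-_{k,\epsilon}]=R_{k,\epsilon}+\mathcal{R}^\sharp_{k,\epsilon}$ with $(I+T^\ast_k)\mathcal{R}^\sharp_{k,\epsilon}=-T^\ast_kR_{k,\epsilon}$, mirroring \eqref{BDC30.1}--\eqref{BDC30.2} from Lemma~\ref{DCM5}. The paper instead iterates once more, writing $D^2\phi_{k,\epsilon}=D^2\phi_{k,\epsilon}(\cdot;2)+R^\ast_{k,\epsilon}-T^\ast_kR^\ast_{k,\epsilon}$ with $(I+T^\ast_k)D^2\phi_{k,\epsilon}(\cdot;2)=[T^\ast_k]^2R^\ast_{k,\epsilon}$, and estimates $\|[T^\ast_k]^2R^\ast_{k,\epsilon}\|_{X_{L,\varrho_k}}$ directly in \eqref{DCM210}. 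The extra application of $T^\ast_k$ is not cosmetic: the source here carries a genuine triple pole, so one application of $T^\ast_k$ leaves a residual singularity (a surviving logarithm) that is still not admissible in $X_{L,\varrho_k}$; only after the second application is the right-hand side suitable for Lemma~\ref{DeC7}. If you follow your one-step scheme you will need to absorb the singular part of $T^\ast_kR_{k,\epsilon}$ either into $\Lambda_2$ or into a separately estimated explicit term $R^\ast-T^\ast R^\ast$, which is precisely what the second iteration accomplishes.
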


\begin{proof}
Denote for $k\in\Z\backslash\{0\}, \lambda\in \Lambda_{\delta_0}\backslash\{b(y_\ast)\}$, $\iota\in\{\pm\}, 0<\epsilon<\min\{|\lambda-b(y_\ast)|,\epsilon_0\}$ and $y\in I$,
\begin{equation}\label{DCM12}
\begin{split}
h_{k,\epsilon}^\iota(z,\lambda;4):=&-\frac{2b''(z)}{(b(z)-\lambda-i\iota\epsilon)^2}\varphi_\delta(z)\partial_\lambda\phi^\iota_{k,\epsilon}(z,\lambda),\\
h_{k,\epsilon}^\iota(z,\lambda;5)=&-\frac{2b''(z)}{(b(z)-\lambda-i\iota\epsilon)^3}\varphi_\delta(z)\phi^\iota_{k,\epsilon}(z,\lambda)\\
 h_{k,\epsilon}^\iota(z,\lambda;6):=&-\frac{b''(z)}{(b(z)-\lambda-i\iota\epsilon)^2}(1-\varphi_\delta(z))\partial_\lambda\phi^\iota_{k,\epsilon}(z,\lambda),\\
h_{k,\epsilon}^\iota(z,\lambda;7)=&-\frac{2b''(z)}{(b(z)-\lambda-i\iota\epsilon)^3}(1-\varphi_\delta(z))\phi^\iota_{k,\epsilon}(z,\lambda) ,\quad h_{k,\epsilon}^\iota(z,\lambda;8):=\partial^2_\lambda g^\iota_{k,\epsilon}(z,\lambda).
\end{split}
\end{equation}
Define for $k\in\Z\backslash\{0\}, \lambda\in \Lambda_{\delta_0}\backslash\{b(y_\ast)\}$, $\iota\in\{\pm\}, 0<\epsilon<\min\{|\lambda-b(y_\ast)|,\epsilon_0\}$ and $z\in I$,
\begin{equation}\label{DCM10}
\begin{split}
\partial^2_\lambda\phi^\iota_{k,\epsilon}(y,\lambda):=&\,\phi^\iota_{k,\epsilon}(y,\lambda;2)+\sum_{j=4}^8\int_0^1\mathcal{G}_k(y,z;\lambda+i\iota\epsilon)h_{k,\epsilon}^\iota(z,\lambda;j)\,dz\\
&-\sum_{j=4}^8T^\ast_k(\lambda+i\iota\epsilon)\int_0^1\mathcal{G}_k(y,z;\lambda+i\iota\epsilon)h_{k,\epsilon}^\iota(z,\lambda;j)\,dz\\
\end{split}
\end{equation}
It follows from \eqref{BDC8.00} that $\phi^\iota_{k,\epsilon}(y,\lambda;2)$ satisfies for $y\in I$,
\begin{equation}\label{DCM11}
\begin{split}
&\phi^\iota_{k,\epsilon}(y,\lambda;2)+T^\ast_k(\lambda+i\iota\epsilon)\phi^\iota_{k,\epsilon}(y,\lambda;2)=\sum_{j=4}^8\big[T^\ast_k(\lambda+i\iota\epsilon)\big]^2\int_0^1\mathcal{G}_k(y,z;\lambda+i\iota\epsilon)h_{k,\epsilon}^\iota(z,\lambda;j)\,dz.
\end{split}
\end{equation}

It follows from Lemma \ref{DCM5} and Lemma \ref{mGk4} that for $j\in\{6,7,8\}$
\begin{equation}\label{DCM13}
\Big\|\big[T^\ast_k(\lambda+i\epsilon)\big]^2\int_0^1\mathcal{G}_k(y,z;\lambda+i\iota\epsilon)h_{k,\epsilon}^\iota(z,\lambda;j)\,dz\Big\|_{X_{N,\varrho_k}}\lesssim (\delta\wedge |k|^{-1})^{-4}|k|^{5/2}\|\omega_{0k}\|_{H^3_k(I)}.
\end{equation}
Using integration by parts argument similar to \eqref{DeC6.2}-\eqref{DeC6.3}, we have also for $j\in\{4,5\}$,
\begin{equation}\label{DCM14}
\begin{split}
&\bigg\|\big[T^\ast_k(\lambda+i\epsilon)\big]^2\int_0^1\mathcal{G}_k(y,z;\lambda+i\iota\epsilon)h_{k,\epsilon}^\iota(z,\lambda;j)\,dz\bigg\|_{X_{N,\varrho_k}}\lesssim (\delta\wedge |k|^{-1})^{-4}|k|^{5/2}\big\|\omega_{0k}\big\|_{H^3_k(I)}.
\end{split}
\end{equation}
It follows from \eqref{DCM12}-\eqref{DCM14} and Lemma \ref{DeC7} that for $\lambda\in \Lambda_{\delta_0}\backslash\{b(y_\ast)\}$, $\iota\in\{\pm\}, 0<\epsilon<\min\{|\lambda-b(y_\ast)|,\epsilon_0\},$
\begin{equation}\label{DCM15}
\big\|\phi^\iota_{k,\epsilon}(y,\lambda;2)\big\|_{X_{N,\varrho_k}}\lesssim (\delta\wedge |k|^{-1})^{-4}|k|^{5/2}\big\|\omega_{0k}\big\|_{H^1_k(I)}.
\end{equation}

Using \eqref{BDC8.00}, we get that for $y\in I$,
\begin{equation}\label{DCM16}
\begin{split}
&\bigg[k^2-\partial_y^2+\frac{b''(y)}{b(y)-\lambda+i\epsilon}\bigg]\big(\partial^2_\lambda\phi^+_{k,\epsilon}(y,\lambda)-\partial^2_\lambda\phi^-_{k,\epsilon}(y,\lambda)\big)=\sum_{j=4}^8\Big[h_{k,\epsilon}^+(y,\lambda;j)-h_{k,\epsilon}^-(y,\lambda;j)\Big].
\end{split}
\end{equation}
Denoting $D^2\phi_{k,\epsilon}(y,\lambda)$, $h\in I, \lambda\in \Lambda_{\delta_0}\backslash\{b(y_\ast)\}$, as the solution to 
\begin{equation}\label{DCM17}
\begin{split}
&\Big[k^2-\partial_y^2+\frac{b''(y)}{b(y)-\lambda+i\iota\epsilon}\Big]D^2\phi_{k,\epsilon}(y,\lambda)=\sum_{j=4}^5\Big[h_{k,\epsilon}^+(y,\lambda;j)-h_{k,\epsilon}^-(y,\lambda;j)\Big],
\end{split}
\end{equation}
for $y\in I$ with zero Dirichlet boundary condition.

We note the identity that for $y\in I, \lambda\in\Sigma_{\delta_0}\backslash\{b(y_\ast)\}$,
\begin{equation}\label{DCM18}
\partial^2_\lambda\psi_k(y,\lambda)=\lim_{\epsilon\to 0+}D^2\phi_{k,\epsilon}(y,\lambda). 
\end{equation}
We can reformulate \eqref{DCM17} as the integral equation for $y\in I$
\begin{equation}\label{DCM19}
\begin{split}
&D^2\phi_{k,\epsilon}(y,\lambda)+T^\ast_k(\lambda+i\epsilon)D^2\phi_{k,\epsilon}(y,\lambda)\\
&=\int_0^1\mathcal{G}_k(y,z;\lambda+i\epsilon)\varphi_\delta(z)\sum_{j=4}^5\Big[h_{k,\epsilon}^+(z,\lambda;j)-h_{k,\epsilon}^-(z,\lambda;j)\Big]\,dz:=R^\ast_{k,\epsilon}(y,\lambda).
\end{split}
\end{equation}
We can write for $\lambda\in\Sigma_{\delta_0}\backslash\{b(y_\ast)\}, 0<\epsilon<\min\{|\lambda-b(y_\ast)|,\epsilon_0\}, y\in I,$
\begin{equation}\label{DCM20}
D^2\phi_{k,\epsilon}(y,\lambda):=D^2\phi_{k,\epsilon}(y,\lambda;2)+R^\ast_{k,\epsilon}(y,\lambda)-T_k^\ast(\lambda+i\epsilon)R^\ast_{k,\epsilon}(y,\lambda).
\end{equation}
Then $D^2\phi_{k,\epsilon}(y,\lambda;2)$ satisfies for $y\in I, \lambda\in\Sigma_{\delta_0}\backslash\{b(y_\ast)\}$,
\begin{equation}\label{DCM21}
D^2\phi_{k,\epsilon}(y,\lambda;2)+T_k^\ast(\lambda+i\epsilon)D^2\phi_{k,\epsilon}(y,\lambda;2)=\big[T_k^\ast(\lambda+i\epsilon)\big]^2R^\ast_{k,\epsilon}(y,\lambda).
\end{equation}
The desired bounds \eqref{DCM9} follow from \eqref{DCM19}-\eqref{DCM21}, and Lemma \ref{mGk30} with $X=X_{L,\varrho_k}$, using also the bound 
\begin{equation}\label{DCM210}
\big\|\big[T_k^\ast(\lambda+i\epsilon)\big]^2R^\ast_{k,\epsilon}(\cdot,\lambda)\big\|_{X_{L,\varrho_k}}\lesssim \Big(\delta\wedge \frac{1}{|k|}\Big)^{-4}|k|^{5/2}\|\omega_{0k}\|_{H^3_k(I)}. 
\end{equation}

\end{proof}

\section{Proof of Theorem \ref{thm}}

In this section, we prove Theorem \ref{thm}. We can assume that $t\ge1$. We first give the proof of \eqref{th0.5}-\eqref{thm1}. Using the representation formula \eqref{F3.4}, we have
\begin{equation}\label{PMR1}
\begin{split}
\psi_k(t,y)&=\frac{1}{2\pi i}\lim_{\epsilon\to0+}\int_{\Sigma}e^{-ik\lambda t}\Big[\psi_{k,\epsilon}^+(y,\lambda)-\psi^-_{k,\epsilon}(y,\lambda)\Big]\,d\lambda\\
&=-\frac{1}{2\pi ik^2t^2}\lim_{\epsilon\to0+}\int_{\Sigma}e^{-ik\lambda t}\Big[\partial_\lambda^2\psi_{k,\epsilon}^+(y,\lambda)-\partial_\lambda^2\psi^-_{k,\epsilon}(y,\lambda)\Big]\,d\lambda.
\end{split}
\end{equation}
Fix $\Phi^\ast\in C_0^\infty(\Sigma_{\delta_0})$ with $\Phi^\ast\equiv 1$ on $\Sigma_{2\delta_0/3}$. We can decompose for $t\ge1, y\in [0,1]$,
\begin{equation}\label{PMR2}
\psi_k(t,y):=\psi^1_k(t,y)+\psi^2_k(t,y),
\end{equation}
where 
\begin{equation}\label{PMR3}
\begin{split}
\psi^1_k(t,y)&:=-\frac{1}{2\pi ik^2t^2}\lim_{\epsilon\to0+}\int_{\Sigma}e^{-ik\lambda t}(1-\Phi^\ast(\lambda))\Big[\partial_\lambda^2\psi_{k,\epsilon}^\iota(y,\lambda)-\partial_\lambda^2\psi^-_{k,\epsilon}(y,\lambda)\Big]\,d\lambda,\\
\psi^2_k(t,y)&:=-\frac{1}{2\pi ik^2t^2}\lim_{\epsilon\to0+}\int_{\Sigma}e^{-ik\lambda t}\Phi^\ast(\lambda)\Big[\partial_\lambda^2\psi_{k,\epsilon}^\iota(y,\lambda)-\partial_\lambda^2\psi^-_{k,\epsilon}(y,\lambda)\Big]\,d\lambda.
\end{split}
\end{equation}
For \eqref{th0.5}, it suffices to prove that for $\sigma\in\{1,2\}$, $k\in\Z\backslash\{0\}$ and $t\ge1$,
\begin{equation}\label{PMR4}
\big\|\psi^\sigma_k(t,\cdot)\big\|_{L^2([0,1])}\lesssim \frac{|k|^{3}}{t^2}\|\omega_{0k}\|_{H^3_k([0,1])}. 
\end{equation}
The case $\sigma=1$ in \eqref{PMR4} corresponding to the non-degenerate case is analogous to the case of monotonic shear flows, see \cite{JiaL}, and follow from Lemma \ref{bsdn1}-Lemma \ref{L1}. We focus on the main new case $\sigma=2$ in \eqref{PMR4}. Denote for $k\in\Z\backslash\{0\}$,
\begin{equation}\label{PMR4.5}
M_k:=|k|^{5/2}\|\omega_{0k}\|_{H^3_k([0,1])}.
\end{equation}
Our main tools are Lemmas \ref{DCM1}, Lemma \ref{DCM5} and Lemma \ref{DCM100}, which imply the following bounds for $y\in [0,1], \lambda\in \Sigma_{\delta_0}$.
\begin{itemize} 
\item If $|\lambda-b(y_\ast)|^{1/2}<|y-y_\ast|/20$, then
\begin{equation}\label{PMR5}
\begin{split}
&|\psi_k(y,\lambda)|\lesssim \big(\min\big\{|\lambda-b(y_\ast)|^{1/2}, |k|^{-1}\big\}\big)^{11/4}(|y-y_\ast|^{-1}+|k|)M_k,\\
&|\partial_\lambda^2\psi_k(y,\lambda)|\lesssim \big(\min\big\{|\lambda-b(y_\ast)|^{1/2}, |k|^{-1}\big\}\big)^{-5/4}(|y-y_\ast|^{-1}+|k|)M_k;
\end{split}
\end{equation}

\item If $|y-y_\ast|/20<|\lambda-b(y_\ast)|^{1/2}<20|y-y_\ast|$, then
\begin{equation}\label{PMR6}
\begin{split}
&|\psi_k(y,\lambda)|\lesssim \big(\min\big\{|\lambda-b(y_\ast)|^{1/2}, |k|^{-1}\big\}\big)^{5/4}|\lambda-b(y_\ast)|^{1/4}M_k,\\
&|\psi_k(y,\lambda)-\psi_k(y,b(y))|\lesssim |\lambda-b(y)|^{1/2}|\lambda-b(y_\ast)|^{3/8}M_k,\\
&\big\|\partial_\lambda^2\psi_k(\cdot,\lambda)-\Lambda_2(\cdot,\lambda)\big\|_{L^2(|y-y_\ast|\approx|\lambda-b(y_\ast)|^{1/2})}\lesssim (|\lambda-b(y_\ast)|^{-1/2}+|k|)^{9/4}|\lambda-b(y_\ast)|^{1/4}M_k;
\end{split}
\end{equation}

\item If $|\lambda-b(y_\ast)|^{1/2}>20|y-y_\ast|$, then
\begin{equation}\label{PMR7}
\begin{split}
&|\psi_k(y,\lambda)|\lesssim |\lambda-b(y_\ast)|^{1/4}\big(\min\big\{|\lambda-b(y_\ast)|^{1/2}, |k|^{-1}\big\}\big)^{5/4}M_k,\\
&\big\|\partial_\lambda^2\psi_k(\cdot,\lambda)-\Lambda_2(\cdot,\lambda)\big\|_{L^2(|y-y_\ast|<|\lambda-b(y_\ast)|^{1/2}/20)}\lesssim (|\lambda-b(y_\ast)|^{-1/2}+|k|)^{9/4}|\lambda-b(y_\ast)|^{1/4}M_k.
\end{split}
\end{equation}
\end{itemize}
It follows from \eqref{PMR5}-\eqref{PMR7} that for $y\in [0,1], t\ge1$,
\begin{equation}\label{PMR8}
\Big|\int_\R e^{-ik\lambda t}\Phi^\ast(\lambda)\Lambda_2(y,\lambda)d\lambda\Big|\lesssim |y-y_\ast|^{-1/4}\max\big\{1, |k|^{1/2}|y-y_\ast|^{1/2}\big\}M_k,
\end{equation}
and, by considering the cases $|\lambda-b(y_\ast)|\ll|y-y_\ast|^2$, $|\lambda-b(y_\ast)|\approx|y-y_\ast|^2$ and $|\lambda-b(y_\ast)|\gg|y-y_\ast|^2$, also that for $y\in [0,1], t\ge1$,
\begin{equation}\label{PMR9}
\Big\|\int_\R e^{-ik\lambda t}\Phi^\ast(\lambda)\big[\partial_\lambda^2\psi_k(y,\lambda)-\Lambda_2(y,\lambda)\big]d\lambda\Big\|_{L^2([0,1])}\lesssim |k|^{9/4}M_k.
\end{equation}
The desired bound \eqref{PMR4} for $\sigma=2$ follows from \eqref{PMR8}-\eqref{PMR9}. 

The proof of \eqref{thm1} is similar to the proof of \eqref{th0.5},  using Lemma \ref{DCM1} and Lemma \ref{DCM5}. 

We now turn to the proof of the depletion bounds \eqref{thm3}. Assume that $k\in\Z\backslash\{0\}$. Applying $-k^2+\partial_y^2$ to $\psi_k(t,y)$ in \eqref{F3.4}, and using \eqref{F7}, we get that for $y\in [0,1], t\ge1$,
\begin{equation}\label{PMR10}
\omega_k(t,y)=\omega_k^\ast(t,y)+\omega_{k}^{\ast\ast}(t,y),\end{equation}
where
\begin{equation}\label{PMR11}
\begin{split}
&\omega_k^\ast(t,y)\\
&:=\frac{1}{2\pi i}\lim_{\epsilon\to 0+}\int_{\Sigma}e^{-ik\lambda t}(1-\Phi^\ast(y))\bigg[\frac{b''(y)\psi_{k,\epsilon}^+(y,\lambda)-\omega_{0k}(y)}{b(y)-\lambda+i\epsilon}-\frac{b''(y)\psi_{k,\epsilon}^-(y,\lambda)-\omega_{0k}(y)}{b(y)-\lambda-i\epsilon}\bigg]\,d\lambda,\\
&\omega_k^{\ast\ast}(t,y):=\frac{1}{2\pi i}\lim_{\epsilon\to 0+}\int_{\Sigma}e^{-ik\lambda t}\Phi^\ast(y)\bigg[\frac{b''(y)\psi_{k,\epsilon}^+(y,\lambda)-\omega_{0k}(y)}{b(y)-\lambda+i\epsilon}-\frac{b''(y)\psi_{k,\epsilon}^-(y,\lambda)-\omega_{0k}(y)}{b(y)-\lambda-i\epsilon}\bigg]\,d\lambda.
\end{split}
\end{equation}
We have the bound for $t\ge1$,
\begin{equation}\label{PMR12}
\big\|\omega_k^\ast(t,y)\big\|_{L^\infty([0,1])}\lesssim |k|^2M_k.
\end{equation}
For $|y-y_\ast|<\delta_0/10, t\ge1$, since $(b(y)-\lambda+i\iota\epsilon)$ with $\iota\in\{\pm\}$ is not singular in this case, we have in addition by integration by parts that
\begin{equation}\label{PMR13}
|\omega_k^\ast(t,y)|\lesssim |k|^2\frac{1}{t}M_k. 
\end{equation}
We now turn to $\omega_k^{\ast\ast}(t,y)$. Using \eqref{BDC1}, we can write for $y\in[0,1], t\ge1$,
\begin{equation}\label{PMR14}
\begin{split}
&2\pi i\,\omega_k^{\ast\ast}(t,y)\\
&=\lim_{\epsilon\to 0+}\int_\R e^{-ik\lambda t}\Phi^\ast(\lambda)\bigg[\frac{\phi_{k,\epsilon}^+(y,\lambda)-(1-\Psi(y))\omega_{0k}(y)}{b(y)-\lambda+i\epsilon}-\frac{\phi_{k,\epsilon}^-(y,\lambda)-(1-\Psi(y))\omega_{0k}(y)}{b(y)-\lambda-i\epsilon}\bigg]\,d\lambda\\
&=\lim_{\epsilon\to 0+}\int_\R e^{-ik\lambda t}\Phi^\ast(\lambda)\bigg[\frac{\phi_{k,\epsilon}^+(y,\lambda)}{b(y)-\lambda+i\epsilon}-\frac{\phi_{k,\epsilon}^-(y,\lambda)}{b(y)-\lambda-i\epsilon}\bigg]\,d\lambda+W_k(t,y),
\end{split}
\end{equation}
where $W_k(t,y)$ satisfies the bound for $t\ge1$, 
\begin{equation}\label{PMR15}
\|W_k(t,\cdot)\|_{L^\infty([0,1])}\lesssim t^{-1}M_k,
\end{equation}
which follows from simple integration by parts argument. We decompose for $y\in [0,1]\backslash\{y_\ast\}$, 
\begin{equation}\label{PMR16}
\begin{split}
\omega_k^{\ast\ast}(t,y)-\frac{W_k(t,y)}{2\pi i}&=\frac{1}{2\pi i}\lim_{\epsilon\to 0+}\int_{\R}e^{-ik\lambda t}\Phi^\ast(\lambda)\bigg[\frac{\psi_{k}(y,\lambda)}{b(y)-\lambda+i\epsilon}\bigg]\,d\lambda\\
&+\frac{1}{2\pi i}\lim_{\epsilon\to 0+}\int_{\R}e^{-ik\lambda t}\Phi^\ast(\lambda)\phi^-_{k,\epsilon}(y,\lambda)\bigg[\frac{1}{b(y)-\lambda+i\epsilon}-\frac{1}{b(y)-\lambda-i\epsilon}\bigg]\,d\lambda.
\end{split}
\end{equation}
It follows from \eqref{PMR5}-\eqref{PMR7} that 
\begin{equation}\label{PMR17}
\begin{split}
&\bigg|\frac{1}{2\pi i}\lim_{\epsilon\to 0+}\int_{\R}e^{-ik\lambda t}\Phi^\ast(\lambda)\phi^-_{k,\epsilon}(y,\lambda)\bigg[\frac{1}{b(y)-\lambda+i\epsilon}-\frac{1}{b(y)-\lambda-i\epsilon}\bigg]\,d\lambda\bigg|\lesssim |y-y_\ast|^{7/4}M_k.\end{split}
\end{equation}
For $\gamma\in(1,\infty)$ to be fixed below, by considering the three ranges (I) $|\lambda-b(y_\ast)|\lesssim|y-y_\ast|^2$, (II) $|\lambda-b(y_\ast)|\ge\gamma|y-y_\ast|^2$, and (III) $|y-y_\ast|^2\ll|\lambda-b(y_\ast)|<\gamma|y-y_\ast|^2$, and using Lemma \ref{DCM5} and Lemma \ref{DCM100}, we get that 
\begin{equation}\label{PMR18}
\begin{split}
&\bigg|\frac{1}{2\pi i}\lim_{\epsilon\to 0+}\int_{\R}e^{-ik\lambda t}\Phi^\ast(y)\bigg[\frac{\psi_{k}(y,\lambda)}{b(y)-\lambda+i\epsilon}\bigg]\,d\lambda\bigg|\\
&\lesssim \Big[|y-y_\ast|^{7/4}\big(1+|k|^{1/2}|y-y_\ast|^{1/2}\big)+\frac{1}{|k|t}(|k|^{1/2}+\gamma^{-1/8}|y-y_\ast|^{-1/4})+\gamma^{7/8}|y-y_\ast|^{7/4}\Big]M_k.\end{split}
\end{equation}
In the above, we used integration by part to get decay in $t$ in range (II). Optimizing in $\gamma$, we get that for $t\ge1$,

(i) if $t|y-y_\ast|^2\lesssim1,$
\begin{equation}\label{PMR19}
\begin{split}
&\bigg|\frac{1}{2\pi i}\lim_{\epsilon\to 0+}\int_{\R}e^{-ik\lambda t}\Phi^\ast(y)\bigg[\frac{\psi_{k}(y,\lambda)}{b(y)-\lambda+i\epsilon}\bigg]\,d\lambda\bigg|\\
&\lesssim \Big[t^{-1}+|k|^{1/2}|y-y_\ast|^{7/4}+t^{-7/8}\Big]
\end{split}
\end{equation}
(ii) if $t|y-y_\ast|^2\gg1,$
\begin{equation}\label{PMR20}
\begin{split}
&\bigg|\frac{1}{2\pi i}\lim_{\epsilon\to 0+}\int_{\R}e^{-ik\lambda t}\Phi^\ast(y)\bigg[\frac{\psi_{k}(y,\lambda)}{b(y)-\lambda+i\epsilon}\bigg]\,d\lambda\bigg|\\
&\lesssim \Big[|y-y_\ast|^{7/4}\big(1+|k|^{1/2}|y-y_\ast|^{1/2}\big)+\frac{1}{|k|^{1/2}t^{7/8}}+|y-y_\ast|^{7/4}\Big]M_k.
\end{split}
\end{equation}
The desired bounds \eqref{PMR15}, \eqref{PMR17}, \eqref{PMR19}-\eqref{PMR20}. Theorem \ref{thm} is now proved.

\end{document}